\documentclass[12pt]{scrartcl}

\usepackage{url}

\usepackage{mathtools}  
\usepackage{amsmath}
\usepackage{amsthm}
\usepackage{amsxtra}

\usepackage{amssymb}

\usepackage[all]{xy}

\usepackage[english]{hyperref}
\usepackage[english]{babel}
\usepackage[utf8]{inputenc}

%% Fields, Rings and Ideals
\newcommand*{\field}[1]{\mathbb{#1}}
\newcommand*{\Q}{\field{Q}}
\newcommand*{\F}{\field{F}}
\newcommand*{\R}{\field{R}}
\newcommand*{\C}{\field{C}}
\newcommand*{\Z}{\field{Z}} 

\newcommand*{\OF}{\mathcal{O}_\F}
\newcommand*{\DF}{\mathcal{D}_\F}

%% Groups

\newcommand*{\group}[1]{\mathrm{#1}}

\newcommand*{\SU}{\group{SU}}
\newcommand*{\Ug}{\group{U}}
\newcommand*{\Og}{\group{O}}
\newcommand*{\SO}{\group{SO}}
\newcommand*{\SL}{\group{SL}}

\newcommand*{\Mp}{\group{Mp}}

\newcommand*{\GammaU}[1]{\Gamma_{#1}^{\Ug}}
\newcommand*{\GammaO}[1]{\Gamma_{#1}^{\Og}}

%% Vsps and projective space
\newcommand*{\VF}{V_{\F}}
\newcommand*{\VQ}{V_{\Q}}
\newcommand*{\VFC}{V_{\F}(\C)} %*
\newcommand*{\VQR}{V_{\Q}(\R)} %*
\newcommand*{\VQC}{V_{\Q}(\C)} %*
\newcommand*{\PFR}{\mathbb{P}^1{(V_{\F})(\C)}} %*
\newcommand*{\PQC}{\mathbb{P}^1{(V_{\Q})(\C)}}  %*

%% Domains and Manifolds
\newcommand*{\GrU}{\mathrm{Gr}_\Ug} 
\newcommand*{\GrO}{\mathrm{Gr}_\Og} 
\newcommand*{\GrK}{\mathcal{G}_K}

\newcommand*{\HU}{\mathcal{H}_\Ug}
\newcommand*{\HO}{\mathcal{H}_\Og}
\newcommand*{\Hp}{\mathbb{H}}

\newcommand*{\coneU}{\mathcal{K}_\Ug}
\newcommand*{\coneO}{\mathcal{K}_\Og} 

\newcommand*{\quadN}{\mathcal{N}} % Null quadric

\newcommand*{\UXmv}[1]{X_{#1}} % modular variety for group #1
\newcommand*{\UXGamma}{\UXmv{\Gamma}}
\newcommand*{\UXGammaBB}{\UXmv{ \Gamma, \mathcal{BB}}^*}

%%% Delimiters for hermitian, etc. forms
\DeclarePairedDelimiter{\hlfa}{\langle}{\rangle}
\newcommand*{\hlf}[2]{\hlfa*{ #1, #2}} % hermitian form
\newcommand{\hlfempty}{\hlf{\cdot}{\cdot}}
\DeclarePairedDelimiter{\blfp}{(}{)}
\newcommand*{\blf}[2]{\blfp*{ #1, #2}} % bilinear form
\newcommand*{\blfempty}{\blf{\cdot}{\cdot}} 

\newcommand*{\Qf}[1]{q\!\left( #1 \right)} % quadratic form
\newcommand*{\QfNop}{q} % 'No parenthesis
\DeclarePairedDelimiter{\abs}{\lvert}{\rvert}

\DeclarePairedDelimiter{\Wposd}{(}{)} % Weyl chamber positivity, delimiter
\newcommand*{\Wpos}[2]{\Wposd*{ #1, #2}} % For Weyl chamber positivity condition

\DeclareMathOperator{\tr}{Tr}

\newcommand*{\Mweak}{\mathcal{M}^!} % weakly holomorphic mf
\newcommand*{\ebase}{\mathfrak{e}} % std. basis for group algebra

\newcommand*{\widthell}{N_\ell} %  Width of cusp

%%%%%%% (Heegner) Divisors 
\DeclareMathOperator{\Div}{div}

\newcommand*{\HeegO}{H}
\newcommand*{\HeegU}{\mathbf{H}}

\newcommand*{\Res}[2]{\left\lbrace #1, #2 \right\rbrace} % residue pairing 

\newcommand*{\Chow}{\mathrm{CH}^1} % first Chow group
\newcommand*{\Abdl}[1]{\mathcal{L}_{#1}} % bundle of automorphic forms
\newcommand*{\BDiv}{\mathcal{B}} % group of boundary divisors

\newtheorem{theorem}{Theorem}[section]
\newtheorem{definition}{Definition}[section]
\newtheorem{prop}{Proposition}[section]
\newtheorem{lemma}{Lemma}[section]
\newtheorem{assume}{Assumption}[section]
\newtheorem{rmk}{Remark}[section]
\newtheorem*{rmk*}{Remark}
\newtheorem{example}{Example}[section]
\newtheorem{corollary}{Corollary}[section]

\newcommand*{\acknowledegements}{\textit{Acknowledegements:}\ }

\title{Borcherds Products on Unitary Groups}
 \author{Eric Hofmann%\\
\footnote{%
 Eric Hofmann. Mathematisches Institut Universität Heidelberg,\newline
Im Neuenheimer Feld 288, 69120 Heidelberg, \newline 
Tel.\ +49\,6221\,54\,5686, Fax.\ +49\,6221\,54\,8312 \newline%
{Email: \url{hofmann@mathi.uni-heidelberg.de}}%, \newline \url{www.mathi.uni-heidelberg.de/\~hofmann}\newline
}
 }
\selectlanguage{english}

\begin{document}
\maketitle
%\tableofcontents

\section*{Abstract}
In the present paper, we provide a construction of the multiplicative Borcherds lift for unitary groups $\Ug(1,m)$,
which takes weakly holomorphic elliptic modular forms as input functions and lifts them to
automorphic forms having infinite product expansions and taking their zeros and poles along Heegner divisors.
In order to transfer Borcherds' theory to unitary groups, we construct a suitable embedding of $\Ug(1,m)$ into $\Og(2,2m)$.
We also derive a formula for the values taken by the Borcherds products at cusps of the symmetric domain of the unitary group. 
Further, as an application of the lifting, we obtain a modularity result
for a generating series with Heegner divisors as coefficients, along the lines of Borcherds' generalization of the Gross-Zagier-Kohnen theorem.   \\
{2010 \textit{Mathematics Subject Classification:} 
11F27, 11F55, 11G18, 14G35.}\\
{\textit{Key words and phrases:}  Borcherds product, unitary modular form, Heegner divisor, unitary modular variety}

\section{Introduction and statement of results}

In \cite{Bo95}, \cite{Bo98}, Borcherds constructs a multiplicative lifting taking weakly holomorphic modular forms to automorphic forms on orthogonal groups of signature $(2,b)$. 
These automorphic forms have infinite product expansions as so called Borcherds products and take their zeros and poles along certain arithmetic divisors, known as Heegner divisors. 
In the present paper, we provide a construction of this lifting for unitary groups of signature $(1,m)$.

A weakly holomorphic modular form for $\SL_2(\Z)$ is a function $f$ on the complex upper half-plane $\Hp = \{ \tau \in \C\,;\; \Im\tau > 0 \}$, 
which transforms as an elliptic modular form, is holomorphic on $\Hp$ and is allowed to have poles at the cusps.
Thus, $f$ has a Fourier expansion of the form
\[
f(\tau) = \sum_{\substack{n \in \Z \\ n \gg -\infty}} c(n)e(n\tau),
\]
where, as usual $e(n\tau) = \exp(2 \pi i n \tau)$. The principal part of $f$ is given by the Fourier polynomial $\sum_{n<0} c(n)e(n\tau)$.

Let $\F$ be an imaginary quadratic number field $\Q(\sqrt{D_\F})$, with $D_\F<0$ its discriminant. Denote by $\OF$ the ring of integers in $\F$. 
The inverse different ideal, denoted $\DF^{-1}$, is the $\Z$-dual of $\OF$ with respect to the trace $\tr_{\F/\Q}$.
 Let $D_\F$ be the discriminant of $\F$. We denote by $\delta$ the square-root of $D_\F$, with the principal branch of the complex square-root.
Then, as a fractional ideal, $\DF^{-1}$ is given by $\delta^{-1} \OF$.

Let $V=\VF$ be an $\F$-vector space of dimension $1+m$, equipped with a non-degenerate hermitian form $\hlfempty$ of signature $(1,m)$. Note that $\Qf{x} = \hlf{x}{x}$ is a rational-valued quadratic form on $\VF$.
Denote by $\VFC$, $\hlfempty$ the complex hermitian space $\VFC = \VF\otimes_\F\C$ with $\hlfempty$ extended to a $\C$-valued hermitian form.

We denote by $L$ an even hermitian lattice in $\VF$, i.e.\ an $\OF$-submodule with $L\otimes_{\OF}\F = \VF$ and with $\hlf{\lambda}{\lambda}\in\Z$ for all $\lambda \in L$. 
For the purposes of this introduction, assume $L$ to be unimodular. 
Further, we assume that $L$ contains a primitive isotropic vector $\ell$ and a second isotropic vector $\ell'$, with $\hlf{\ell}{\ell'} \neq 0$.
Additionally, in the present introduction we assume $\hlf{\ell}{\ell'} = -\delta^{-1}$.    
 
Denote by $\Ug(V)$ the unitary group of $\VF$, $\hlfempty$ and $\Ug(V)(\R)$ the set of its real points, thus $\Ug(V)(\R) \simeq \Ug(1,m)$. 
We denote by $\Ug(L)$ the arithmetic subgroup of $\Ug(V)$ which acts as the isometry group of $L$. 
We consider $\Gamma = \Ug(L)$ as the unitary modular group.

The hermitian symmetric domain for the action of $\Ug(V)(\R)$ is isomorphic to the positive projective cone
\[
\coneU = \left\{ [v]\in \PFR\,;\; \hlf{v}{v} > 0  \right\}.
\]
For each $[v] \in \coneU$, there is unique representative of the form $z = \ell' - \tau\ell + \sigma$, with $\sigma$ negative definite. 
Now, the symmetric domain can be realized as an unbounded domain, called the Siegel domain model:
\[
\HU = \left\{ (\tau,\sigma) \in \C\times\C^{m-1}\,;\; 2\Im\tau\abs{\delta}^{-2} > - \hlf{\sigma}{\sigma}  \right\}.
\]
The action of $\Ug(V)(\R)$ on $\HU$ induces a non-trivial factor of automorphy, $j(\gamma;\tau,\sigma)$. 

Let $k$ be an integer, $\Gamma$ a subgroup of finite index in $\Ug(L)$ and $\chi$ a multiplier system of finite order on $\Gamma$.
 A unitary automorphic form of weight $k$ on $\Gamma$ with multiplier system $\chi$ is a meromorphic function $f: \HU \rightarrow \C$ which satisfies
\[
f(\gamma(\tau,\sigma)) 
= j(\gamma;\tau, \sigma)^k \chi(\gamma) f(\tau,\sigma)\quad\text{for all}\; \gamma \in \Gamma.
\] 
An automorphic form is called a modular form if it is holomorphic on $\HU$ and regular at the cusps of $\HU$. 

Let $\lambda \in L$ be a lattice vector with negative norm, i.e.\ $\hlf{\lambda}{\lambda}<0$.
 The complement of $\lambda$ with respect to $\hlfempty$ defines a codimension one subset of the projective cone $\coneU$, denoted $\HeegU(\lambda)$.
The corresponding subset of $\HU$ is the support of a primitive divisor, a \emph{prime Heegner divisor}, which we also denote $\HeegU(\lambda)$.

For a negative integer $n$, the Heegner divisor $\HeegU(n)$
of index $n$ on $\HU$ is the $\Gamma$-invariant 
divisor defined as the locally finite sum 
\[
\HeegU(n) \vcentcolon =
 \sum_{\substack{\lambda \in L \\ \hlf{\lambda}{\lambda} = n}} 
\HeegU(\lambda).
\] 
Write $\OF$ in the form $\Z + \zeta\Z$, with $\zeta = \frac12 \delta$ if $\DF$ is even and $\zeta = \frac12(1 + \delta)$ if $\DF$ is odd. Then, for $L$ a unimodular lattice containing a primitive isotropic vector $\ell$ and a second isotropic vector $\ell'$, with  
$\hlf{\ell}{\ell'} = -\delta^{-1}$, our main result can be stated as follows:
\begin{theorem}\label{thm:main_intro}
Given a weakly holomorphic modular form $f$ for $\SL_2(\Z)$ 
of weight $1 - m$, with a
Fourier expansion of the form 
$\sum_{n\gg -\infty} c(n)\,e(n\tau)$. 
Assume that $f$ has integer
coefficients in its principal part. 
Then, there is a meromorphic function $\Xi_f(\tau, \sigma)$ on $\HU$
with the following properties 
\begin{enumerate}
\item $\Xi_f(\tau, \sigma)$ is an automorphic form of weight $c(0)/2$ for $\Ug(L)$. 
\item The zeros and poles of $\Xi_f$ lie on Heegner divisors. We have
\[
\Div \bigl( \Xi_f \bigr) =
 \frac12 \sum_{\substack{n<0 \\ c(n) \neq 0}} c(n)\,\HeegU(n), 
\]
with the Heegner divisors $\HeegU(n)$ as introduced above.
\item Near the cusp corresponding to $\ell$, 
the function $\Xi_f(\tau, \sigma)$ has an
absolutely converging infinite product expansion, for every Weyl chamber $W$, of the form 
\[
\begin{aligned}
\Xi_f(\tau, \sigma)
 & = C e\left( \delta \hlfa[\big]{z, \rho_f(W)}\right)
\prod_{\substack{\lambda \in K \\ \Wpos{\lambda}{W}>0}}\Bigl(1 -
 e\left( \delta \hlfa[\big]{z, \lambda} \right) \Bigr)^{c(\hlf{\lambda}{\lambda}, \lambda)}, \\
\end{aligned}
\]
where, as above, $z = \ell' - \tau\ell + \sigma$, $C$ is a constant of absolute value $1$ and $\rho_f(W)$ is the Weyl vector attached to $W$; $K$ denotes a $\Z$-submodule of $L$, of rank $2m$.
 Here, $K$ can be written in the form $K = \Z\zeta\ell \oplus \Z\ell' \oplus D$, with $D$ negative definite. The positivity condition $\Wpos{\lambda}{W}>0$ depends on the Weyl chamber. 
\item The lifting is multiplicative. We have $\Xi_{f + g}(\tau, \sigma) = \Xi_f(\tau, \sigma)\cdot \Xi_g(\tau, \sigma)$.  
\end{enumerate}
\end{theorem}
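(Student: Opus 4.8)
The plan is to deduce Theorem~\ref{thm:main_intro} from Borcherds' multiplicative lift on orthogonal groups \cite{Bo98} by means of the embedding $\Ug(V)\hookrightarrow\Og(\VQ)$ obtained from viewing the hermitian space $\VF$ of signature $(1,m)$ as a rational quadratic space $\VQ$ of dimension $2+2m$ and signature $(2,2m)$, with quadratic form $\Qf{x}=\hlf{x}{x}$. The first step is to record the compatibilities furnished by the embedding: the positive cone $\coneU$ sits inside the orthogonal symmetric domain $\coneO$ for $\Og(2,2m)$ as a complex $m$-dimensional submanifold, every $\F$-linear isometry is in particular a $\Q$-linear one so that $\Gamma=\Ug(L)\subset\Og(L)$ with $L$ regarded as an even $\Z$-lattice, and the cusp of $\HU$ attached to $\ell$ lies over the corresponding orthogonal cusp. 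Since $L$ is unimodular its discriminant form is trivial, so the scalar weakly holomorphic form $f$ of weight $1-m=1-\tfrac12(2m)$ is exactly an admissible input for the orthogonal lift.

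Granting this, I would apply Borcherds' theorem to obtain the orthogonal Borcherds product $\Psi_f$, a meromorphic modular form of weight $c(0)/2$ on $\coneO$ for $\Og(L)$, whose divisor is $\tfrac12\sum_{n<0}c(n)\HeegO(n)$ and which admits a convergent infinite product expansion near the cusp of $\ell$. I then set $\Xi_f\vcentcolon=\Psi_f|_{\coneU}$, the pullback along $\coneU\hookrightarrow\coneO$. Automorphy under $\Gamma$ is immediate from $\Gamma\subset\Og(L)$ together with the $\Gamma$-invariance of $\coneU$; the substantive point for part~(1) is to show that the orthogonal factor of automorphy restricts to the unitary factor $j(\gamma;\tau,\sigma)$ \emph{to the first power}, so that the weight remains $c(0)/2$ and no spurious factor of two appears. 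This I would verify by comparing the canonical sections of the tautological bundles over $\coneO$ and over $\coneU$, using the explicit representative $z=\ell'-\tau\ell+\sigma$.

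For part~(2) I would translate the restriction of orthogonal Heegner divisors into unitary ones. Under the identification of the complexified orthogonality on $\coneO$ with the holomorphic hermitian condition on the sub-domain, the restriction of an orthogonal prime divisor meets $\coneU$ exactly in the hermitian complement $\{\,[v]\in\coneU : \hlf{v}{\lambda}=0\,\}=\HeegU(\lambda)$. Because $\Qf{x}=\hlf{x}{x}$ on $L$, the index-$n$ orthogonal divisor $\HeegO(n)=\sum_{\Qf{\lambda}=n}\HeegU(\lambda)$ and the unitary divisor $\HeegU(n)=\sum_{\hlf{\lambda}{\lambda}=n}\HeegU(\lambda)$ run over the \emph{same} set of lattice vectors, whence $\HeegO(n)|_{\coneU}=\HeegU(n)$ and the coefficient $\tfrac12 c(n)$ survives the pullback unchanged. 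The bookkeeping needing care is the treatment of unit multiples $u\lambda$ with $u\in\OF^\times$ and the $\pm$-identification underlying Borcherds' factor $\tfrac12$, so that each prime divisor occurs with the right multiplicity. For part~(3) I would restrict Borcherds' orthogonal product, realize his rank-$2m$ primitive lattice as $K=\Z\zeta\ell\oplus\Z\ell'\oplus D$ with $D$ negative definite, and re-express the complexified orthogonal pairing in hermitian terms; here the normalization $\hlf{\ell}{\ell'}=-\delta^{-1}$ is precisely what produces the factor $\delta$ in each exponential $e(\delta\hlf{z}{\lambda})$, while the orthogonal Weyl chambers and Weyl vector restrict to the unitary data $W$ and $\rho_f(W)$, with positivity condition $\Wpos{\lambda}{W}>0$. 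Part~(4) is then formal: the construction is additive in $f$ at the level of the regularized theta integral $-\log\lVert\Psi_f\rVert^2$ and hence multiplicative on products, giving $\Xi_{f+g}=\Xi_f\cdot\Xi_g$.

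The step I expect to be the main obstacle is the precise matching under restriction in parts~(1) and~(2): establishing that the orthogonal automorphy factor restricts to $j(\gamma;\tau,\sigma)$ to the first power, and that the pullback of $\HeegO(n)$ equals $\HeegU(n)$ with exactly the coefficient $c(n)$, without gaining or losing multiplicity when distinct orthogonal vectors—such as unit multiples—restrict to the same hermitian complement. Making the lattice-theoretic dictionary between $L$ as an $\OF$-module and as a $\Z$-lattice sharp enough to render these identifications exact is the technical heart of the argument.
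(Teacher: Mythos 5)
Your proposal is correct and follows essentially the same route as the paper: the paper constructs the holomorphic embedding $\alpha\colon \HU\hookrightarrow\HO$ with the normalization $X_L=\bigl(\tfrac{1}{2\hlf{\ell'}{\ell}}\bigr)\sphat\, z$, proves weight preservation of the pullback via $\C$-homogeneity of degree $-k$ (lemma \ref{lemma:pbmf}), matches restricted orthogonal Heegner divisors with unitary ones including the unit-multiple bookkeeping (lemmas \ref{lemma:Pb_Heeg_lambda} and \ref{lemma:PbHeeg_mbeta}), and defines $\Xi_f\vcentcolon=\alpha^*\Psi_L(Z,f)$, exactly as you propose. The technical points you single out — that the automorphy factor restricts to the first power, the multiplicity of divisors under units $u\in\OF^\times$, and the hermitian rewriting of the product data with $K=\Z\zeta\ell\oplus\Z\ell'\oplus D$ — are precisely what the paper's sections \ref{sec:embed}, \ref{sec:heeg1} and the proof of theorem \ref{thm:BPmain} (specialized to unimodular $L$ in corollary \ref{cor:MThUnimod}) carry out.
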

The Weyl chambers occurring here are connected components of $\HU$ defined by inequalities depending on the principal part of the input function $f$, see section \ref{sec:weylch} below.  

Viewed in the projective model $\coneU$, the %($\F$-rational) 
boundary components of the symmetric domain consist of isolated points.
The boundary point associated to $\ell$ corresponds to the cusp at infinity of $\HU$, see section \ref{sec:unitary} below. 
Since the lift $\Xi_f$ is meromorphic, a priori, it may have a pole or a zero at this cusp. 
If this not the case, the following result gives the value taken at the cusp explicitly:
 \begin{theorem}\label{thm:limit_simple} 
Let $W$ be a Weyl chamber, such that the cusp at infinity is contained in the closure of $W$. 
If this cusp is neither a zero nor a pole of $\Xi_f(\tau,\sigma)$, then the limit $\lim_{\tau\rightarrow i\infty} \Xi_f(\tau,\sigma)$ is given by
\[
\lim_{\tau\rightarrow i\infty}\Xi_f(\tau, \sigma) = C e\bigl(\overline{\rho_{f}(W)}_\ell\bigr)\prod_{\substack{\lambda = \kappa\zeta\ell \in K \\ \kappa \in \Q_{>0}} }
\left(1 - e\bigl(\kappa\bar\zeta\bigr)\right)^{c(0,\lambda)},
\] 
where $\rho_{f}(W)_\ell$ denotes the $\ell$-component of the Weyl vector $\rho_f(W)$.
\end{theorem}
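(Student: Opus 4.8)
The plan is to read off the limit directly from the product expansion in Theorem~\ref{thm:main_intro}(3), which is absolutely and locally uniformly convergent for large $\Im\tau$ in the region attached to the Weyl chamber $W$; this convergence is what will justify passing the limit $\tau\to i\infty$ inside the infinite product factor by factor. First I would compute the exponent $\delta\langle z,\lambda\rangle$ for $z=\ell'-\tau\ell+\sigma$ and an arbitrary $\lambda=a\zeta\ell+b\ell'+d\in K$ (with $d$ in the negative definite part $D$), using the isotropy of $\ell$, the relation $\langle\ell,\ell'\rangle=-\delta^{-1}$ (whence $\langle\ell',\ell\rangle=\delta^{-1}$, since $\delta$ is purely imaginary), and the orthogonality of $\ell,\ell',\sigma$ to $D$. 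This isolates the only $\tau$-dependent contribution as the linear term $\tau\,b$, so that $\lvert e(\delta\langle z,\lambda\rangle)\rvert=e^{-2\pi b\,\Im\tau}$ up to a bounded factor.

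The heart of the argument is then to decide which factors survive. Factors with $b>0$ satisfy $e(\delta\langle z,\lambda\rangle)\to 0$, hence $\bigl(1-e(\delta\langle z,\lambda\rangle)\bigr)^{c(\langle\lambda,\lambda\rangle,\lambda)}\to 1$. For the factors with $b=0$ the exponent reduces to $\delta\langle z,\lambda\rangle=\bar a\bar\zeta+\delta\langle\sigma,d\rangle$, independent of $\tau$; those with $\lambda=\kappa\zeta\ell$ (i.e.\ $d=0$) give precisely $\delta\langle z,\kappa\zeta\ell\rangle=\kappa\bar\zeta$, and since $\langle\kappa\zeta\ell,\kappa\zeta\ell\rangle=0$ they contribute the factors $\bigl(1-e(\kappa\bar\zeta)\bigr)^{c(0,\kappa\zeta\ell)}$ appearing in the claim. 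To dispose of the remaining $b=0$, $d\neq0$ factors I would exploit that every sequence with $\tau\to i\infty$ approaches the single boundary point $[\ell]$ irrespective of $\sigma$; by hypothesis $\Xi_f$ is there neither a zero nor a pole, so the limit exists, is finite and nonzero, and is independent of $\sigma$. One may therefore evaluate it along any convenient approach, for instance by also pushing $\Im\sigma$ deep into $W$: the positivity condition $\Wpos{\lambda}{W}>0$ forces $\Im(\delta\langle\sigma,d\rangle)\to+\infty$ for every $d\neq0$ occurring in the product, so these factors likewise tend to $1$.

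For the prefactor $C\,e(\delta\langle z,\rho_f(W)\rangle)$ I would run the same decomposition of $\delta\langle z,\rho_f(W)\rangle$ into a $\tau$-linear part, proportional to the $\ell'$-component of $\rho_f(W)$, and a $\tau$-independent part. A nonzero $\ell'$-component would make $e(\delta\langle z,\rho_f(W)\rangle)$ decay or blow up, i.e.\ produce a zero or a pole of $\Xi_f$ at the cusp; the hypothesis rules this out, so the $\tau$-linear part vanishes and the prefactor converges. Combined with the convenient limit in $\sigma$, the surviving $\tau$- and $\sigma$-independent part of the prefactor is $e\bigl(\overline{\rho_f(W)}_\ell\bigr)$, the contribution of the $\ell$-component of the Weyl vector. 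Assembling this with the surviving product over $\lambda=\kappa\zeta\ell$, $\kappa\in\Q_{>0}$, gives the asserted formula.

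The main obstacle I anticipate is the rigorous handling of the $b=0$, $d\neq0$ factors together with any $D$-component of $\rho_f(W)$: individually these contributions are $\sigma$-dependent, and controlling them cleanly requires the geometric input that the limit is the value at a single boundary point, hence $\sigma$-independent, coupled with a second limit in $\sigma$ dictated by the positivity condition near the cusp. Relating the order of $\Xi_f$ at the cusp to the $\ell'$-component of $\rho_f(W)$, and thereby translating the ``neither zero nor pole'' hypothesis into the vanishing of the divergent part, is the second delicate point; by contrast, the interchange of the $\tau$-limit with the infinite product is routine given the absolute convergence furnished by Theorem~\ref{thm:main_intro}(3).
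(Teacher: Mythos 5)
Your outline reproduces the paper's proof in the easy steps: the $\ell'$-component of $\rho_f(W)$ produces a zero or pole at the cusp, hence vanishes by hypothesis, and the factors with $\ell'$-component $b>0$ tend to $1$. But the mechanism you propose for the genuinely delicate part --- the $\tau$-independent factors with $b=0$, $d\neq 0$, and a possible $D$-component $\rho_D$ of the Weyl vector --- does not work. You want to exploit the $\sigma$-independence of the boundary value by a second limit, pushing $\sigma$ inside $W$ so that $\Im\bigl(\delta\hlf{\sigma}{d}\bigr)\to+\infty$ for every occurring $d\neq 0$. Two objections. First, the positivity condition does not supply such a direction: for $\lambda=a\zeta\ell+d$ it only asserts $\Im(a\bar\zeta)+\Im\bigl(\delta\hlf{\sigma}{d}\bigr)>0$ throughout $W$, a sign condition that is satisfied by taking $a$ sufficiently negative even when $\Im\bigl(\delta\hlf{\sigma}{d}\bigr)$ is bounded on $W$; in that case factors with $-d$ occur in the product as well (recall $c(n,\beta)=c(n,-\beta)$), and then driving $\Im\bigl(\delta\hlf{\sigma}{d}\bigr)\to+\infty$ both exits $W$ and blows those factors up, so simultaneous decay for all occurring $d$ is impossible in general. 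Likewise, if $\rho_D\neq 0$, the prefactor contribution $e\bigl(\delta\hlf{\sigma}{\rho_D}\bigr)$ oscillates or diverges under any $\sigma$-limit and never converges to $e\bigl(\overline{\rho_f(W)}_\ell\bigr)$; you must prove $\rho_D=0$, which your argument nowhere does. Second, even where a suitable direction existed, replacing the fixed-$\sigma$ limit by a joint limit $(\tau,\sigma)\to\infty$ requires continuity of $\Xi_f$ at the cusp in the Baily--Borel topology (uniformity on the sets $\HU^C$); the pointwise-in-$\sigma$ constancy of the limit that you invoke does not by itself license evaluation along such a path.

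The paper uses the same key input --- constancy of the zeroth Fourier--Jacobi coefficient $a_0$ --- but algebraically rather than via a limiting path: it expands each surviving factor as a binomial series, multiplies out, and compares the resulting expansion in the characters $e\bigl(\hlf{z}{\tilde\lambda}/\hlf{\ell'}{\ell}\bigr)$ with the constant $a_0$; since the left-hand side is independent of $\sigma$, all terms with $\tilde\lambda_D\neq 0$, as well as the $\rho_D$-term, must vanish identically, which forces $\rho_D=0$ and $\lambda_D=0$ for every $\lambda$ contributing a nontrivial factor. That coefficient comparison is precisely the step your proposal is missing. (A geometric boundary-value argument in your spirit does exist --- evaluating the orthogonal lift $\Psi_L$ on the one-dimensional boundary component via lemma \ref{lemma:limit_embed}, as in the author's thesis --- but it proceeds through the boundary component, not through a $\sigma$-limit inside a Weyl chamber.) A minor further omission: you never rule out factors with $b<0$, which would diverge as $\tau\to i\infty$; this is exactly where the hypothesis that the cusp lies in the closure of $W$ enters, via the Weyl chamber condition evaluated at $\tau=it$, $t\gg 0$, which forces $b\geq 0$.
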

In \cite{Bo99} Borcherds obtained a result, \cite{Bo99}, theorem 4.5, on the modularity of Heegner divisors, which, using Serre duality for vector bundles over Riemann surfaces, can be seen as a consequence of the existence and the properties of the Borcherds lift.

We extend a result of this type to Heegner divisors on $\HU$. Let $\UXGamma = \Gamma \backslash \HU$ be the unitary modular variety and $\pi: \tilde\UXGamma \rightarrow \UXGamma$ a desingularization.
 Denote by $\Chow(\tilde\UXGamma)$ the first Chow group of $\tilde\UXGamma$. Recall that $\operatorname{Pic}(\tilde\UXGamma) \simeq \Chow(\tilde\UXGamma)$.
We consider a modified Chow group $\Chow(\tilde\UXGamma)/\BDiv$, where $\BDiv$ denotes the subgroup of boundary divisors of $\tilde\UXGamma$. 
Let $\Abdl{k}$ be the sheaf of meromorphic automorphic forms on $\UXGamma$. 
By the theory of Baily Borel, there is a positive integer $n(\Gamma)$,
 such that for every positive integer $k$ divisible by $n(\Gamma)$, the pullback of $\Abdl{k}$ defines an arithmetic line bundle in $\operatorname{Pic}(\tilde\UXGamma)$. We denote its class in the modified Chow group  by $c_1(\Abdl{K})$.
 More generally, for $k\in\Q$, we define a class in $(\Chow(\tilde\UXGamma)/ \BDiv)_\Q = (\Chow(\tilde\UXGamma)/ \BDiv) \otimes_{\Z} \Q$ by choosing an integer $n$, such that $kn$ is a positive integer divisible by $n(\Gamma)$, and setting $c_1(\Abdl{k}) = \frac{1}{n} c_1(\Abdl{kn})$.
As the Heegner divisors are $\Q$-Cartier on $\UXGamma$, their pullbacks define elements in $(\Chow(\tilde\UXGamma)/\BDiv)_\Q$. 

For a unimodular lattice $L$, our modularity result can now be formulated as follows.
\begin{theorem}\label{thm:Heeg_simple}
The generating series
\[
A(\tau) = c_1(\Abdl{-1/2})  + \sum_{n\in\Z}\pi^*(\HeegU(n))\,q^n\; \in\Q[[q]]\otimes(\Chow(\tilde\UXGamma)/\BDiv)_\Q  
\]
is a modular form valued in $(\Chow(\tilde\UXGamma)/\BDiv)_\Q$. More precisely, $A(\tau)$ is an element of $\mathcal{M}_{1+m}(\SL_2(\Z))\otimes (\Chow(\tilde\UXGamma)/\BDiv_\Q)$.  
 \end{theorem}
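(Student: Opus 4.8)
The plan is to follow Borcherds' strategy in \cite{Bo99}, deducing the modularity of the divisor-valued generating series from the mere existence of the Borcherds lift together with a duality criterion for scalar modular forms. Write $W \vcentcolon= (\Chow(\tilde\UXGamma)/\BDiv)_\Q$, a finite-dimensional $\Q$-vector space. Applying an arbitrary $\Q$-linear functional $\phi\colon W \to \C$, the assertion reduces to showing that each scalar series $\phi(A(\tau)) = \sum_n \phi(A_n)\,q^n$ lies in $\mathcal{M}_{1+m}(\SL_2(\Z))$. Here I would invoke the criterion (a form of Serre duality for line bundles on the compactified modular curve, as used by Borcherds): a formal $q$-series with coefficients bounded below is the Fourier expansion of a modular form of weight $1+m$ precisely when, for every weakly holomorphic modular form $f = \sum_n c(n)\,e(n\tau)$ of the dual weight $2-(1+m) = 1-m$, the constant Fourier coefficient of the weight-two weakly holomorphic product $f(\tau)\,\phi(A(\tau))$ vanishes. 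The ``only if'' direction is the residue theorem; the ``if'' direction rests on the non-degeneracy of this pairing, i.e.\ on the fact that principal parts of weakly holomorphic forms of weight $1-m$ separate enough functionals. Thus it suffices to verify, for every such $f$, a single linear relation in $W$.

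The second and principal step is to produce these relations from Theorem \ref{thm:main_intro}. Fix a weakly holomorphic $f$ of weight $1-m$ with integral principal part and Fourier coefficients $c(n)$. The lift $\Xi_f$ is a meromorphic automorphic form of weight $c(0)/2$ with $\Div(\Xi_f) = \tfrac12\sum_{n<0}c(n)\,\HeegU(n)$. Viewing $\Xi_f$ as a rational section of $\Abdl{c(0)/2}$ on the desingularized model $\tilde\UXGamma$ and reducing modulo boundary divisors, this divisor identity becomes the relation
\[
\tfrac12\sum_{n<0} c(n)\,\pi^*(\HeegU(n)) \;=\; c_1\bigl(\Abdl{c(0)/2}\bigr) \;=\; -\,c(0)\,c_1\bigl(\Abdl{-1/2}\bigr)
\]
in $W$, where I have used the additivity $c_1(\Abdl{k}) = k\,c_1(\Abdl{1})$. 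Rearranged, this is exactly the vanishing of the pairing described above, the normalization $A_0 = c_1(\Abdl{-1/2})$ of the constant term having been chosen for precisely this purpose and the index matching $\HeegU(n) \leftrightarrow c(n)$ being the one built into the criterion.

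It remains to check that integral principal parts suffice. Since we work in $W = W \otimes_\Z \Q$, arbitrary rational principal parts are obtained by scaling, and the integral ones span over $\Q$ the full space of principal parts realizable by weakly holomorphic forms of weight $1-m$. By Serre duality that space is exactly the annihilator of $S_{1+m}(\SL_2(\Z))$ inside the space of all finite principal parts, which is the same constraint governing the modularity criterion on the automorphic side; hence no relation needed for modularity is lost, and the criterion applies verbatim to yield $A(\tau) \in \mathcal{M}_{1+m}(\SL_2(\Z)) \otimes W$.

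The main obstacle I expect lies not in this duality bookkeeping but in the geometric step of converting the analytic divisor equation of Theorem \ref{thm:main_intro} on $\HU$ into the clean identity in $W$ used above. Three points must be secured: that the Heegner divisors $\HeegU(n)$, which may be singular or meet the boundary, are $\Q$-Cartier on $\UXGamma$ with well-defined pullbacks $\pi^*(\HeegU(n)) \in W$; that the automorphic line bundle $\Abdl{k}$ genuinely extends to $\tilde\UXGamma$ after clearing denominators by $n(\Gamma)$, so that $c_1(\Abdl{c(0)/2})$ is defined over $\Q$; and, most critically, that the discrepancy between $\Div(\Xi_f)$ on $\HU$ and the divisor of the extended section on $\tilde\UXGamma$ is supported entirely on the boundary, hence killed upon passing to $\Chow/\BDiv$. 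Establishing this last point is the crux: one must control the zeros and poles of $\Xi_f$ along the boundary, for which the product expansion of Theorem \ref{thm:main_intro} and the cusp-value formula of Theorem \ref{thm:limit_simple} furnish the necessary local data. Once these geometric facts are in place, the modularity of $A(\tau)$ follows formally from the criterion.
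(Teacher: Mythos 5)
Your proposal follows essentially the same route as the paper: the paper's proof of the general version (theorem \ref{thm:AseriesMod}) invokes exactly the Borcherds--Serre-duality criterion (lemma \ref{lem:exDual}, with McGraw's integrality result reducing to integral principal parts) and then obtains the required relation $c(0,0)\,c_1(\Abdl{-1/2}) + \sum_{n<0} c(n,\beta)\,\pi^*\HeegU(n,\beta) = 0$ in $(\Chow(\tilde\UXGamma)/\BDiv)_\Q$ by viewing $\Xi_f$ as a rational section of $\Abdl{c(0,0)/2}$, precisely as you do. The geometric caveats you flag at the end (the $\Q$-Cartier property, the extension of $\Abdl{k}$ via $n(\Gamma)$, and the discrepancy along the boundary being absorbed into $\BDiv$) are points the paper asserts rather than proves, so your hedging there matches the paper's own level of detail rather than departing from it.
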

We derive a more general version of this theorem, where $L$ is not assumed to be unimodular and $\Gamma$ is an arbitrary unitary modular group in section \ref{sec:heeg2}. Recently, a similar result has been independently obtained by Liu, in \cite{Liu}, using a completely different approach.

The paper is structured as follows:
In sections \ref{sec:unitary} and \ref{sec:ortho} we describe the basic setup and introduce notation used in following chapters.
Section \ref{sec:unitary} covers the basic theory for unitary groups, in particular the theory of hermitian lattices, the construction of the Siegel domain model and the definition of modular forms. 
In section \ref{sec:ortho} we recall the theory of modular forms on orthogonal groups of signature $(2,2m)$ and in particular the construction of the tube domain model $\HO$ for the symmetric domain.
Most of the content of this section is well-known, more detailed accounts can be found in \cite{Br02}, chapter 3, or \cite{Bo98}, section 13.

Section \ref{sec:embed} is in a  sense the technical heart of the paper. 
The $(m+1)$-dimensional hermitian $\F$-vector space $\VF, \hlfempty$ is identified with the underlying rational ${(2m+2)}$-dimensional $\Q$-vector space, denoted $\VQ$ and equipped with the bilinear form $\blfempty \vcentcolon = \tr_{\F/\Q} \hlfempty$.
 From this, we get an embedding of $\Ug(V)$ into $\SO(V)$. 
This, in turn, induces an embedding of the corresponding symmetric domains.
 Our objective is to realize this as an embedding of the Siegel domain model $\HU$ into the tube domain model $\HO$, 
which is compatible with the complex structures of both $\HU$ and $\HO$ and with the geometric structure resulting from the choice of cusps.

The following sections cover some of the prerequisites for the Borcherds lift. In section \ref{sec:weil_vv} we recall the Weil representation $\rho_L$ of $\SL_2(\Z)$
on the group algebra $\C[L'/L]$ and the definition of weakly holomorphic vector valued modular forms transforming under $\rho_L$.
In section \ref{sec:heeg1}, we introduce the Heegner divisors on $\HU$ and show how these can also be described through the restriction to the embedded $\HU$ of Heegner divisors on $\HO$,
 which occur in Borcherds' theory for orthogonal groups. Finally, section \ref{sec:weylch} covers the definition of Weyl chambers of $\HU$ which is derived from that of Weyl chambers of $\HO$.

Our main theorem is presented in section \ref{sec:mainthm}, the proof given here is based on Borcherds' seminal paper \cite{Bo98}. 
We apply the machinery of the pullback under the embedding from section \ref{sec:embed} to the proof of Borcherds' theorem 13.3 from \cite{Bo98}.

In section \ref{sec:limit} we take the Borcherds products constructed in the preceding section and evaluate their limit on the boundary of $\HU$. 
We obtain a more general form of theorem \ref{thm:limit_simple} above.

In the last section, section \ref{sec:heeg2}, we derive the modularity statement for Heegner divisors sketched as \ref{thm:Heeg_simple}. 
We start with a slight reformulation of a result of Borcherds' on obstructions, see \cite{Bo99}, theorem 3.1. 
The modularity result then quickly follows from our main theorem.

\section{Hermitian spaces and modular forms for unitary groups}\label{sec:unitary}
\paragraph{The number field $\F$}
Let $\F = \Q(\sqrt{D_\F})$ be an imaginary quadratic field of discriminant $D_\F<0$, 
and denote by $\OF$ be the ring of integers of $\F$. 
We consider $\F$ as a subfield of $\C$.
Denote by $\delta$ the square root of $D_\F$, where the square root is understood as the principal branch of 
the complex square root. 

The \emph{inverse different} ideal $\DF^{-1}$ %
(also called \emph{complementary ideal}) is the $\Z$-dual of $\OF$
with respect to the trace $\tr_{\F/\Q}$. As a fractional ideal, $\DF^{-1} = \delta^{-1}\OF$.

We may write $\OF$ in the form $\Z + \zeta\Z$, where for the basis element $\zeta$, we set
\begin{equation}\label{eq:def_zeta}
 \zeta = \begin{dcases*}
 \delta/2 & \text{if $D_\F$ is even,} \\
 \frac12 (1 + \delta) & \text{if $D_\F$ is odd.} 
 \end{dcases*}
\end{equation}
\begin{rmk*}
Another common choice of basis element is $\frac12 ( D_\F + \delta)$. The above choice of $\zeta$, which depends on the parity of $D_\F$, is technically motivated, as it avoids the need for frequent case distinction and allows for a more concise notation, particularly in sections 4, 8 and 9 below.  
\end{rmk*}

\paragraph{A hermitian space}
Let $\VF$ be a vector-space of dimension $1+m$ over $\F$, equipped with
a non-degenerate hermitian form $\hlfempty$, indefinite of signature
$(1,m)$, with $m\geq 1$. We define $\hlfempty$ to
be linear in the first and conjugate-linear in the second argument, 
\[
\hlf{\alpha x}{\beta y} = \alpha \bar\beta \hlf{x}{y}, \qquad
\text{for all}\quad \alpha,\beta \in \F.
\] 
By $\VFC$ we denote the complex hermitian space $\VF\otimes_\F\C$, with 
$\hlfempty$ extended to a complex hermitian form.

Note that with the non-degenerate, symmetric bilinear form is defined as $\blfempty\vcentcolon = \tr_{\F/\Q}\hlfempty$, $\VF$ becomes a quadratic space over $\Q$.
The corresponding real quadratic space is denoted $\VQR$, $\blfempty$, where the extension of $\blfempty$ to a bilinear form over $\R$ is equal to $2\Re\hlfempty$. 

\paragraph{Hermitian lattices} 
By a \emph{hermitian lattice} in $\VF$, we mean an $\OF$-submodule $L$ of $\VF$ with
$L \otimes_{\OF} \F = \VF$, equipped with the hermitian form $\hlfempty\mid_L$.

A lattice $L$ is called integral, if $\hlf{\lambda}{\mu} \in \DF^{-1}$ for any $\lambda, \mu \in L$.
Further, $L$ is called even, if $\hlf{\lambda}{\lambda} \in \Z$, for any $\lambda \in L$.

The dual lattice $L'$ is defined as 
\[
L' = \{ v \in \VF\,;\, \hlf{\lambda}{v} \in \DF^{-1} \quad\text{for all}\quad \lambda \in L\}.
\]  
If $L$ is an integral lattice, then $L\subset L'$. A lattice is called unimodular, if it is self-dual.

Let $L$ be an integral hermitian lattice. The quotient $L'/L$ is a finite
$\OF$-module, called the \emph{discriminant group} of $L$.

\begin{rmk*} If $\VF$ is considered as a rational quadratic space with the bilinear form $\blfempty$, 
a lattice $L$ is a quadratic $\Z$-submodule of maximal rank. 
If a hermitian lattice $L$ is integral or even by the above definition, it is also integral by the usual definition for quadratic lattices as $\Z$-modules. Also, the dual of $L$ with respect to $\blfempty$ is also given by $L'$.
\end{rmk*}

Now, assume that $\VF$ contains an isotropic vector. (This assumption is trivial if $m>2$.) Then,
 if $L$ is an (even) hermitian lattice in $V$, we may choose a primitive isotropic vector $\ell$ in $L$ and
there exists a lattice vector  $\ell' \in L'$ with $\hlf{\ell}{\ell'} \neq 0$. 
\begin{assume}
 In the following, whenever dealing with a hermitian lattice $L$, we will assume the following
\begin{enumerate}
\item $L$ is even.
 \item There is a primitive isotropic vector $\ell \in L$.
\item The dual lattice contains a vector $\ell'$ with $\hlf{\ell}{\ell'}\neq 0$ and {which is also isotropic}. 
\end{enumerate}
\end{assume}

\begin{example}\label{ex:hyp_plane}
 Consider the $\OF$-module $H = \OF\oplus\DF^{-1}$, generated by
$1$ and $\delta^{-1}$. This is a lattice in $\VF = \F^2$. 
With the indefinite hermitian form $\hlfempty$ given by
\[
\hlf{ (x,y)}{ (x',y')} = x\bar{y}' + y\bar{x}', 
\]
the lattice $H$ has signature $(1,1)$, is even and unimodular. A
hermitian lattice isomorphic to $H$ is called a
\emph{hyperbolic plane over $\OF$}. 
Here, we may choose $\ell = (1,0) \in \OF$ and $\ell'$ any element of $\DF^{-1}$.
\end{example}

Finally, for the hermitian lattice $L$, with $\ell$ and $\ell'$ as above,
 we denote by $D$ the complement of $\ell$ and $\ell'$ with respect to $\hlfempty$ in $L$,
\[
D = \left\{ \kappa \in L\,;\, \hlf{\kappa}{\ell} =\hlf{\kappa}{\ell'} = 0\right\}.
\]

\paragraph{Unitary groups}
The \emph{unitary group} of $\VF$ is the subgroup of
$\group{GL}(V)$ preserving the hermitian form,
\[
 \Ug(V) = \left\lbrace g \in \group{GL}(V)\,;\,
 \hlf{g x}{g y} = \hlf{x}{y}\quad\!\text{for all}\quad x,y\in \VF \right\rbrace,
\]
We consider $\Ug(V)$ as an algebraic group defined over $\Q$. The set of its real
points, $\Ug(V)(\R)$, is the unitary group of $\VFC$.

Let $L$ be a hermitian lattice in $\VF$. The isometries of $L$ define an
arithmetic subgroup, consisting of the integer points of $\Ug(V)$, 
\[ 
\Ug(L) = \left\lbrace g\in \Ug(V);\; g(L) = L \right\rbrace.
\]
We shall consider the subgroups of finite index in $\Ug(L)$ as \emph{unitary 
  modular groups.} 
In particular, the \emph{discriminant kernel} is the subgroup of $\Ug(L)$ acting trivially
on the discriminant group, denoted by $\GammaU{L}$.

\paragraph{The  Siegel domain model}

A standard symmetric domain for the operation of \linebreak$\Ug(V)(\R)$ is given by the quotient
\[
\Ug(V)(\R) / \mathcal{C}_{\Ug} \simeq \Ug(1,m) / \left( \Ug(1) \times \Ug(m)\right).
\]
 with $\mathcal{C}_{\Ug}$ a maximal compact subgroup. It can also be described as
the following positive cone in the projective space $\PFR$,
\[
\coneU = \left\{ [v] \in \PFR\,;\, \hlf{v}{v}  >0,\;\text{for all}\; v \in [v] \right\}.
\]
We can also realize $\coneU$ as an unbounded domain, denoted by $\HU$. To this aim, for each $[v]\in\coneU$,  we choose a representative $z$ of the following form,
\begin{equation} \label{eq:defz}
 z = \ell' - \delta\hlf{\ell'}{\ell} \tau\ell + \sigma,
\end{equation}
 with $\sigma$ in the complement $D\otimes_{\OF}\C$ of $\ell$ and $\ell'$.
 Then by the positivity condition, 
\[
\hlf{z}{z}  = 2\abs{\delta}\abs{\hlf{\ell'}{\ell}}^2 \Im\tau + \hlf{\sigma}{\sigma} > 0.
\]
The set of $(\tau, \sigma)$ satisfying this equation, is thus an affine model for $\coneU$,
\[
\HU = \left\{  (\tau, \sigma) \in  \C\times \bigl(D\otimes_{\OF}\C\bigr) \,;\, 2\abs{\hlf{\ell'}{\ell}}^2 \Im\tau> - \hlf{\sigma}{\sigma} \right\}.
\] 
We call $\HU$ the \emph{Siegel domain model}. To each pair $(\tau, \sigma) \in \HU$, we can uniquely associate a
$z\in \VFC$ with $[z]\in\coneU$ of the above form, which we denote as $z(\tau,\sigma)$.
Note that the operation of $\Ug(V)(\R)$ on $\HU$ gives rise to a non-trivial automorphy factor.

\paragraph{Parabolic subgroups and the stabilizer of a cusp} 

The choice of the isotropic vector $\ell$ corresponds to choosing a boundary point $[\ell]\in \PFR$ of $\coneU$. This is 
the cusp of $\HU$ at $[\ell]$ (called infinity). 

Denote by $P(\ell)$ the stabilizer in $\Ug(V)$ of the one-dimensional isotropic subspace  $\F\ell$ in $\VF$. We consider the following transformations contained in $P(\ell)$:
\begin{align}
\label{def:translationsU}\quad & [h,0]: v \; \mapsto \; v - \hlf{v}{\ell}\delta h \ell & \quad\text{for $h\in\Q$},    \\
\label{def:eichlerU}\quad & [0,t]:\; v \; \mapsto \; v + \hlf{v}{\ell}t - \hlf{v}{t}\ell 
        - \frac{1}{2}\hlf{v}{\ell}\hlf{t}{t}\ell & \quad\text{for $t\in D\otimes_{\OF}\F$}.    
\end{align}
Transformations of the form \eqref{def:eichlerU} are called Eichler transformations.
The action of these transformations on a point $(\tau, \sigma)$ of $\HU$ is given by
\[
[h,0]: (\tau,\sigma) \mapsto (\tau + h,\sigma), \quad
[0,t]: (\tau,\sigma) \mapsto \left( \tau +
\frac{\hlf{\sigma}{t}}{\delta\hlf{\ell'}{\ell}}
+\frac12 \frac{\hlf{t}{t}}{\delta}, \sigma + \hlf{\ell'}{\ell}t\right).
\]
The set of pairs $[h,t]$ is the \emph{Heisenberg group} attached to $\ell$, denoted $H(\ell)$. Its group law is given by 
\[
[h,t]\circ [h',t] = \bigl[h + h' +
\frac{\Im\hlf{t}{t'}}{2\abs{\delta}}, t + t'\bigr].
\]
The full stabilizer of the cusp attached to $\ell$ is the following semi-direct product 
\[
 P(\ell) = H(\ell) \ltimes  \Ug\bigl(D\otimes_{\OF} \F\bigr),
\]
with as direct factor the unitary group of the definite subspace $D\otimes_{\OF}\F$, considered as a subgroup of $\Ug(V)$.
If $\Gamma$ is a unitary modular group, i.e.\ of finite index in $\Ug(L)$, we denote by $\Gamma(\ell)$ the intersection $\Gamma \cap P(\ell)$.
\begin{prop}\label{prop:ParabN}
If $\Gamma$ is a subgroup of finite index in $\Ug(L)$, then there is a positive rational number $N$ and a lattice $\widetilde{D}$ of finite index in $D$, such that $[h,t] \in \Gamma(\ell)$, for all $h \in N\Z$, $t\in \widetilde{D}$, and $\frac12 \Im\hlf{t}{t'} \abs{\delta}^{-1}\! \in N\Z$, for all  $t,t'\in \widetilde{D}$.
\end{prop}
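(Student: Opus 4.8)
The plan is to exhibit the required pair by realizing the set $\Lambda := \{\,[h,t]\,;\ h\in N\Z,\ t\in\widetilde D\,\}$ as a subgroup of $\Gamma(\ell)\cap H(\ell)$. Reading off the group law $[h,t]\circ[h',t']=\bigl[h+h'+\tfrac12\Im\hlf{t}{t'}\abs{\delta}^{-1},\,t+t'\bigr]$, one sees that $\Lambda$ is closed under multiplication and inversion precisely when $\widetilde D$ is a sublattice of $D$ and the central term $b(t,t'):=\tfrac12\Im\hlf{t}{t'}\abs{\delta}^{-1}$ lies in $N\Z$ for all $t,t'\in\widetilde D$ — that is, exactly under the condition asserted in the statement. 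It therefore suffices to arrange: (i) $[h,0]\in\Gamma$ for all $h\in N\Z$; (ii) $[0,t]\in\Gamma$ for all $t\in\widetilde D$; and (iii) $b(t,t')\in N\Z$ for all $t,t'\in\widetilde D$. Here $b$ is a $\Q$-valued alternating bilinear form on $D\otimes_{\OF}\F$, with $b(t,t)=0$ because $\hlf{t}{t}$ is real.

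I would first pin down the center. Since $L$ is even, $\hlf{v}{\ell}\in\DF^{-1}$ for $v\in L$, so the translation $[h,0]\colon v\mapsto v-\hlf{v}{\ell}\delta h\ell$ carries $\OF\ell\subseteq L$ into $L$ for every $h$ in a fractional ideal of the form $q_0\Z$, $q_0\in\Q_{>0}$, and $h\mapsto[h,0]$ embeds $q_0\Z$ as an infinite cyclic subgroup of $\Ug(L)$. As $\Gamma$ has finite index in $\Ug(L)$, the group $\{\,h\in\Q\,;\ [h,0]\in\Gamma\,\}$ has finite index in $q_0\Z$ and hence equals $N_1\Z$ for some $N_1\in\Q_{>0}$; I set $N:=N_1$, which gives (i).

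The Eichler transformations require more care. A direct inspection of the formula for $[0,t]$ shows it preserves $L$ as soon as $t$ lies in a suitable full-rank sublattice $D_1\subseteq D$ (integrality being a finite set of congruences), and $b(D_1,D_1)\subseteq\tfrac1e\Z$ for some $e\in\Z_{>0}$. Since $b(c\,D_1,c\,D_1)=c^2\,b(D_1,D_1)$, choosing $c\in\Z_{>0}$ divisible enough yields $b(c\,D_1,c\,D_1)\subseteq N_1\Z$, so (iii) will hold on every sublattice of $c\,D_1$. The \emph{main obstacle} is (ii): the map $t\mapsto[0,t]$ is not a homomorphism, so $\{\,t\,;\ [0,t]\in\Gamma\,\}$ is not obviously a group. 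This is exactly what the preceding scaling fixes: for $t,t'\in c\,D_1$ the product $[0,t]\circ[0,t']=[b(t,t'),\,t+t']$ has central part $b(t,t')\in N_1\Z$, and multiplying by $[-b(t,t'),0]\in\Gamma$ shows that $\{\,t\in c\,D_1\,;\ [0,t]\in\Gamma\,\}$ is genuinely a subgroup. Finally, for each basis vector $t_i$ of $c\,D_1$ the relation $[0,t_i]^k=[0,k t_i]$ (valid since $b(t_i,t_i)=0$) identifies $\langle[0,t_i]\rangle$ with an infinite cyclic group, which meets the finite-index group $\Gamma$ in a subgroup of finite index; hence $[0,c_i t_i]\in\Gamma$ for some $c_i\in\Z_{>0}$. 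Taking $\widetilde D$ to be the sublattice generated by the vectors $c_i t_i$, it lies inside the subgroup just described, so (ii) holds, while (iii) persists because $\widetilde D\subseteq c\,D_1$. With $N=N_1$ and this $\widetilde D$, every $[h,t]$ with $h\in N\Z$ and $t\in\widetilde D$ factors as $[h,0]\circ[0,t]$ with both factors in $\Gamma(\ell)$, and (iii) is the second assertion; this proves the proposition.
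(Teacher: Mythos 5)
Your proof is correct, and its skeleton matches the paper's: first identify the central translations $\{h\in\Q \,;\, [h,0]\in\Gamma(\ell)\}$ with a group $N\Z$ using finite index, then produce Eichler elements $[0,t]\in\Gamma(\ell)$ along a (scaled) basis of $D$ via $[0,t]^k=[0,kt]$, and finally relate the central condition $\tfrac12\Im\hlf{t}{t'}\abs{\delta}^{-1}\in N\Z$ to the Heisenberg group law. The genuine difference is the logical order at the one delicate point. The paper asserts $[0,\widetilde D]\subset\Gamma(\ell)$ by ``iterating over a basis'' and only afterwards deduces the central condition from the group law; but, as you correctly observe, $t\mapsto[0,t]$ is not a homomorphism, so passing from $[0,c_it_i]\in\Gamma(\ell)$ on generators to all of $\widetilde D$ already requires the central corrections $b(t,t')=\tfrac12\Im\hlf{t}{t'}\abs{\delta}^{-1}$ to lie in $N\Z$ --- a point the paper's proof glosses over. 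Your pre-scaling step, exploiting the quadratic behaviour $b(cD_1,cD_1)=c^2\,b(D_1,D_1)$ to force all $b$-values into $N\Z$ \emph{before} invoking finite index, makes $\{t\in cD_1\,;\,[0,t]\in\Gamma\}$ a genuine subgroup and thereby closes this gap; you also take $N$ to be the exact generator of $\{h\,;\,[h,0]\in\Gamma\}$, which is cleaner than the paper's replacement of $N$ by $rN$ (the paper's last step implicitly needs this set to be exactly $N\Z$). In short: your argument is a slightly longer but more rigorous version of the paper's, securing the third condition by construction rather than obtaining it, as the paper does, as a corollary of the group law.
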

\begin{proof}
 Denote by $H$ the set of $h \in \Q$, such that $[h,0] \in \Gamma(\ell)$. Clearly, $\Z \subset H$, as from \eqref{def:translationsU} by the evenness of $L$ and the fact that $\OF \ell \subset L$, we get $[h,0] v \in L$ for all $v \in L$, if $h \in \Z$. On the other hand,  $H$ is an additive subgroup of $\Q$ isomorphic the center of $H(\ell)$.
 It follows that there is a unique $N \in \Q_{>0}$ with $H = N\Z$. Now, if $\Gamma$ is of finite index in $\Ug(L)$, there is a positive integer $r$ such that $[h,0]^r = [rh,0] \in \Gamma(\ell)$.     
Replace $N$ by $r\cdot N$. 

Next, consider $[0,t]$ with $t\in D$. Note that $[0,t]^k = [0,kt]$ for any $k\in\Z$.
For $v \in L$, we have $[0,t]v - v \in L'$ or $[0,2t]v - v \in L'$. Further, $L'/L$ is finite, so there is an integer $k>0$ such that $[0, kt] v \in L$ for any $v \in L$. For $\Gamma \subset \Ug(L)$ of finite index, there is a suitable multiple $k'$ with $[0,k't] \in \Gamma(\ell)$. Iterating over $t$ from a basis of $D$, the existence of a lattice $\tilde{D}$ of finite index in $D$ follows, with $[0, \tilde{D}] \subset \Gamma(\ell)$. 

Finally, the last part of the statement follows by the group law of the Heisenberg group, 
since $\bigl[\frac12 \abs{\delta}^{-1} \Im\hlf{t}{t'}, 0 \bigr] \in \Gamma(\ell).$  
\end{proof}

\paragraph{Compactification} 
Let $\Gamma$ be a unitary modular group. 

The modular variety $\UXGamma$ is given by the quotient $\Gamma \backslash \HU \simeq \Gamma\backslash\coneU \in \PFR$. 
It can be compactified as follows. The cusps of $\UXGamma$ are defined by $\Gamma$-orbits of isotropic one-dimensional subspaces of $\VF$. Denote the set of these subspaces as $\operatorname{Iso}(V)$. For each $I \in \operatorname{Iso}(V)$, a boundary point of $\coneU$ is given by $I_\C = I \otimes_\C \F$.
 Now, a compactification of $\UXGamma$, the Baily-Borel compactification, $\UXGammaBB$, can be obtained by introducing a suitable topology and a complex structure on the quotient
\begin{equation}
\Gamma \backslash \left( \coneU \cup \left\{ I_\C; I\in \operatorname{Iso}(V) \right\}\right). \label{eq:BBquotient}
\end{equation}
By the results of Baily-Borel, from \cite{BailyBorel}, it carries the structure of a normal complex space. However, in general, there are remaining singularities at the cusps. These can be resolved, see \cite{Ho11}, chapter 1.1.5, in what amounts to the construction of a toroidal compactification, cf.\ \cite{BHY13}, section 4.1. 

We briefly sketch how the Baily-Borel compactification is constructed.
Modulo $\Ug(V)$-translates, it suffices to define neighborhoods for the cusp at infinity, with $I_\C = [\ell]$:
\begin{definition}\label{def:setHUC} 
For a real number $C>0$, define the following  $\Gamma(\ell)$-invariant sets 
\[
\begin{aligned}
\HU^C & = 
\left\lbrace (\tau, \sigma) \in \HU;\;  
2\abs{\delta} \abs{\hlf{\ell'}{\ell}}^2\,\Im(\tau) + \hlf{\sigma}{\sigma} > C 
\right\rbrace, \\
& \simeq \left\lbrace z\in\coneU;\; \frac{\hlf{z}{z}}{\abs{\hlf{z}{\ell}}^2} \abs{\hlf{\ell'}{\ell}}^2  > C \right\rbrace. 
 \end{aligned}
\]
\end{definition}
Further, denote $\overline{\HU^C} = \HU^C \cup \{ \infty \}$. 
A topology on $\HU \cup \{\infty\}$ is defined as follows: 
 A subset $U \subset \HU \cup \{\infty\}$ is called \emph{open}, if, 
 firstly, $U \cap \HU$ is open in the usual sense and, secondly, 
if $\infty \in U$ implies that $\overline{\HU^C}  \subseteq U$ for some $C>0$.

By similarly introducing neighborhoods in $\HU^* = \HU \cup \left\{ I_\C\, ; I \in \operatorname{Iso}(V) \right\}$ for every  $\Gamma$-equivalence class of $\operatorname{Iso}(V)$, one gets a topology on $\HU^*$. The quotient topology then defines the  usual Baily-Borel topology on  $\Gamma \backslash \HU^*$, the quotient from \eqref{eq:BBquotient}. 

The complex structure on $\UXGammaBB$ is defined as follows. 
Denote by $\operatorname{pr}$ the canonical projection of $\operatorname{pr}: \HU^* \rightarrow \Gamma \backslash \HU^*$. 
For an open set $U\subset \Gamma\backslash \HU^*$, let $U' \subset \HU^*$ be the inverse image under $\operatorname{pr}$ and let $U'$ be the inverse image of $U''$ in $\HU$. We have the following diagram
\[
\xymatrix{%
  {\HU} \ar[r] & {\HU^*} \ar[r]^{\operatorname{pr}} &
{\UXGammaBB} \\ 
 {U''} \ar@{_(->}[u] \ar[r] &  {\;U'}  \ar@{_(->}[u] \ar[r] &
**[r]{U^{\phantom{*}}.} \ar@{_(->}[u]} 
\] 
Now, define $\mathcal{O}(U)$ as the ring of continuous functions $f: U\rightarrow \C$, which have holomorphic pullback $\operatorname{pr}^*(f)$ to $U'$ (and $U''$). With the usual methods of algebraic geometry this defines the sheaf $\mathcal{O}$ of holomorphic functions on $\UXGammaBB$. Then, by theory of Baily-Borel $(\UXGammaBB, \mathcal{O})$ is a normal complex space.

\paragraph{Modular forms}
Denote by $j(\gamma; \tau, \sigma)$ the automorphy factor induced by the action of $\Ug(V)(\R)$ on $\HU$.
\begin{definition}
Let $\Gamma$ be unitary modular group, i.e.\ of finite index in $\Ug(L)$. A holomorphic automorphic form of weight $k$ and with character $\chi$ for $\Gamma$ 
is a function $f:\, \HU\rightarrow \C$,  with the following properties:
\begin{enumerate}
\item $f$ is holomorphic on $\HU$,
\item $f(\gamma(\tau,\sigma)) = j(\gamma; \tau,\sigma)^k \chi(\gamma) f(\tau,\sigma)$ for all $\gamma \in \Gamma$.
\end{enumerate}
If further, $f$ is regular at every cusp, it is called a $\emph{modular form}$.
\end{definition}
Meromorphic automorphic forms are defined similarly.
By the Koecher principle, if $m>1$, any holomorphic automorphic form is a modular form. 

Since by proposition \ref{prop:ParabN} translations of the form $\tau \rightarrow \tau + \nu$, 
with $\nu \in N\Z$ are induced from elements of the parabolic subgroup $P(\ell) \cap \Gamma$,
an automorphic form admits a Fourier-Jacobi expansion
\begin{equation}
 \label{eq:fj_exp}
 f(\tau, \sigma) = \sum_{n \in \Z + r} a_n(\sigma) e\left( 
\frac{n}{N}\tau\right), 
\end{equation}
with  $N$ as in proposition \ref{prop:ParabN} and $r$ a constant, $r\in\Q$, $r\geq 0$. 
Regularity at the cusp attached to $\ell$ means that the coefficients $a_n(\sigma)$ vanish for $n<0$. 
The coefficient $a_0(\sigma)$ is constant, thus denoted $a_0$. If $r$ is not in $\Z$, $a_0 = 0$.  
For $n\neq 0$, the coefficient functions $a_n(\sigma)$ transform as theta-functions under the Eichler transformations in $\Gamma$.  

The value of $f$ at infinity is given by 
\[
\lim_{\tau \rightarrow i\infty} f(\tau, \sigma) = a_0.
\]
It depends only on the $\Gamma$-equivalence class of $\ell$ and, in particular, does not depend on the choice of $\ell'$, cf.\ \cite{FrCub}, lemma 6.1. 

\section{Quadratic spaces and modular forms for orthogonal groups}\label{sec:ortho}
\paragraph{$\VF$ as a rational quadratic space} 
In the following, we consider $\VF$ as a quadratic space $\VQ$ 
over $\Q$ of signature $(2,2m)$,
with the non-degenerate symmetric $\Q$-valued bilinear form $\blfempty = \tr_{\F/\Q}\hlfempty$ 
and the quadratic form $\Qf{\cdot}$, given by $\Qf{x} = \frac12 \blf{x}{x}$. 
Note that $\Qf{x} = \hlf{x}{x}$.
Then $\VQR$ is the corresponding real quadratic space with the bilinear and quadratic forms obtained from $\blfempty$ and
 $\Qf{\cdot}$ by extension of scalars. 

By a quadratic lattice $M$, we mean a $\Z$-submodule of $\VQ$, with $M \otimes_\Z \Q = \VQ$, endowed with a quadratic form.
 In particular, if $L$ is a hermitian lattice as in section \ref{sec:unitary}, as a $\Z$-module,
 $L$ is a quadratic lattice in $\VQ$, $\blfempty$ with quadratic form $\Qf{\cdot}\mid_L$, and $D$ is a sub-lattice of $L$ 
both as a hermitian lattice over $\OF$ and as a quadratic lattice over $\Z$. 

The orthogonal group of $\VQ$, $\blfempty$ is given by
\[
\begin{gathered}
  \Og(V) = \{ \gamma \in \mathrm{GL}(V)\,;\, \blf{\gamma v}{\gamma w} = \blf{v}{w} \;\text{for all}\quad v,w \in \VQ \}, \\
\SO(V) = \Og(V) \cap \mathrm{SL}(V).
\end{gathered}
\]
The groups of real points of these groups are denoted $\Og(V)(\R)$ and $\SO(V)(\R)$, respectively. 
By $\SO^+(V)$, we denote the \emph{spinor kernel}, 
the kernel of the map $\theta$ in the following exact sequence,
\[
\xymatrix{1 \ar[r] &  \{ \pm 1 \} \ar[r] & 
\operatorname{Spin}(V)(\R) \ar[r] & \SO(V)(\R)
\ar[r]^\theta & \R^{\times}/(\R^\times)^2.
}
\]
The spinor kernel is the connected component of the identity in $\SO(V)(\R)$,
and consists of the orientation preserving transformations.

\paragraph{The tube domain model}
For the following, cf.\ \cite{Bo98}, mainly section 13, and \cite{Br02}, chapter 3.2.
A symmetric domain for the action of $\SO(V)(\R)$ on $\VQR$ is given by
\[
\SO(V)(\R) / \mathcal{C}_{\SO} \simeq \SO(2,2m) / \mathrm{S}\left(  \Og(2) \times \Og(2m) \right).
\]
As a real analytic manifold, this is isomorphic to the Grassmannian $\GrO$ of $2$-\-dimensional
positive definite subspaces of $\VQR$. In order to introduce a complex structure on $\GrO$, a continuously 
varying choice of oriented basis is carried out for each $v \in \GrO$:
 Write $v$ as 
\begin{equation}\label{eq:XLYLdef}
v = \R X_L + \R Y_L, \quad\text{with}\quad X_L\perp Y_L,\; X_L^2 = Y_L^2 > 0, 
\end{equation}
then $X_L$ and $Y_L$ can be considered as the real and the imaginary part of a complex vector $Z_L$, as follows.
Consider the complex quadratic space $\VQC = \VQ \otimes_\Q \C$ with $\blfempty$ extended to a $\C$-valued bilinear form of signature $(2,2m)$.
Then, 
\[
Z_L = X_L + iY_L, \quad\text{satisfies}\quad
\blf{Z_L}{Z_L} = 0, \; \blf{Z_L}{\overline{Z}_L} >0.
\] 
The set of such $Z_L$ defines a subset of the projective space $\PQC$, the positive cone $\mathcal{K}^\pm$ in the zero quadric $\mathcal{N}$, given by   
\[
\mathcal{K}^\pm = \{[Z_L]\,;\, \blf{Z_L}{\overline{Z}_L}> 0 \} \;\subset\;
\mathcal{N} = \{[Z_L]\,;\, \blf{Z_L}{Z_L} = 0 \} \subset \PQC.
\] 
The positive cone $\mathcal{K}^\pm$ has two connected components, which are interchanged by the action of $\SO(V)(\R)/\SO^+(V)(\R)$ and stabilized by  $\SO^+(V)(\R)$. 
We choose one fixed component and denote it by $\coneO$. The Grassmannian model $\GrO$ can now be identified with $\coneO$.
Clearly, for $[Z_L] \in \coneO$ if we write $Z_L = X_L +i Y_L$, 
then $v = \R X_L + \R Y_L$ is an element of $\GrO$, whereas, conversely, for $v$ in $\GrO$, we obtain a unique 
$[Z_L] = [X_L + iY_L]$ in $\coneO$. 
Hence, $\coneO$ is a projective model for the symmetric domain. 

To obtain an affine model, $\coneO$ can now be realized as a tube domain. 
 Let $e_1 \in L$ be a primitive isotropic lattice vector, i.e. $\Qf{e_1} = 0$, and choose $e_2\in L'$ 
with $\blf{e_1}{e_2}=1$. Denote by $K$ the Lorentzian $\Z$-sub-lattice $K = L \cap e_1^\perp \cap e_2^\perp$. 

Then, by requiring $\blf{X_L}{e_1} = 1$, $\blf{Y_L}{e_1} = 0$, we obtain a unique representative $Z_L$ for $[Z_L] \in \coneO$,
which we may write in the form
\[
Z_L = e_2 + b e_1 +  Z = (1,b,Z), 
\]
with $Z \in K\otimes_\Z \C$ and $b \in \C$. Denote by $\mathcal{H}^\pm$ the set of vectors in $K\otimes_\Z \C$ with positive imaginary part,
\[
\mathcal{H}^\pm = \{ Z = X + iY \in K\otimes_\Z\C\,;\, Y^2>0 \}.
\] 
It has two connected components and is mapped biholomorphically to $\mathcal{K}^\pm$, by the map
\begin{equation}\label{eq:mapHOconeO}
Z \longmapsto [Z_L] = [(1,-\Qf{Z} -\Qf{e_2},Z)].
\end{equation}
The tube domain model $\HO$ is the connected component of $\mathcal{H}^\pm$ mapped to $\coneO \subset \mathcal{K}^\pm$.

Let $Z = X + iY$ be a point of $\HO$, and $[Z_L]$, $v = \R X_L + \R Y_L$ the corresponding 
elements in $\coneO$ and $\GrO$, respectively. We have (cf.\ \cite{Br02}, p.\ 79):
\[
\begin{aligned}
 Z_L & = \left( 1, -\Qf{Z} -\Qf{e_2},  Z\right),  \\
 X_L & = \left( 1, \Qf{Y} - \Qf{X} - \Qf{e_2}, X \right), \\
 Y_L & = \left( 0, -\blf{X}{Y}, Y\right).
\end{aligned}
\]
Note that $X_L^2  =  Y_L^2 = Y^2$, since for any $x \in \VQR$ with $\blf{x}{e} = 0$,
the projection to $K\otimes_\Z \R$, given by 
\begin{equation}\label{eq:def_pK} 
p_K\,:\; x \mapsto x - \blf{x}{e_1} e_2, 
\end{equation}
is an isometry.

\paragraph{Modular forms}
For the lattice $L$, as a quadratic $\Z$-module, 
we denote by $\SO^+(L)$ the subgroup of $\SO^+(V)$, which consists of isometries of $L$.
By $\GammaO{L}$, we denote the discriminant kernel, the subgroup of $\SO^+(L)$ acting trivially on the discriminant group $L'/L$.  
We will consider subgroups of finite index in $\GammaO{L}$ as modular groups.

The operation of $\SO(V)(\R)$ on $\HO$ induces a non-trivial factor of automorphy, denoted $J(\gamma; Z)$. 
Then, if $\Gamma \subset \GammaO{L}$ is a modular group, automorphic forms on $\HO$ for $\Gamma$ are defined similarly as automorphic forms on $\HU$ in the unitary setting, see e.g.\ \cite{Br02}, chapter 3.3.

\paragraph{Boundary components}
The Isotropic subspaces of $\VQR$ correspond to boundary components of $\coneO$ in the zero quadric $\mathcal{N}$.
These can be described as follows cf.\ \cite{BrFr}, section 2.
\begin{rmk}
 \begin{enumerate}
\item Let $F \subset \VQR$ be a one-dimensional isotropic (real) subspace. Then, $F$ represents a boundary point of $\coneO$. A boundary point of this type is called \emph{special}. A zero-dimensional boundary component is a set
consisting of one special boundary point. 
Boundary points, which are not special, are called \emph{generic}.
\item Let $F \subset \VQR$ be a two-dimensional totally isotropic (real) subspace. 
Now, the set of all \emph{generic} boundary points, which can be represented by elements of $F\otimes_\R\C$ is called a one-dimensional boundary component. 
Note that a one-dimensional boundary component can be identified with a copy
of the usual complex upper half-plane $\Hp = \{ \tau \in \C\,;\, \Im\tau>0\}$.
\end{enumerate}
There is a one-to-one correspondence between boundary components and isotropic subspaces of $\VQR$ of the corresponding dimensions. % 
The boundary of $\coneO$ is the disjoint union of all zero- and one-dimensional boundary components.
\end{rmk}
A boundary component is called \emph{rational}, if the corresponding isotropic subspace $F$ is defined over $\Q$.
Cusps, as usual, are defined by $\Gamma$-equivalence classes of rational isotropic vectors. In particular, 
$e_1$ can be identified with a cusp for $\GammaO{L}$ of $\HO$.

\section{The embedding}\label{sec:embed}

\paragraph{The underlying quadratic space}

As already sketched above, the hermitian space $\VF, \hlfempty$ over $\F$ can also be considered as a quadratic space $\VQ$ over $\Q$, 
equipped with the bilinear form $\blfempty = \tr_{\F/\Q}\hlfempty$.
 Accordingly, we can view the complex hermitian space $\VFC, \hlfempty$ as the real quadratic space $\VQR$, with $\blfempty$ extended to a real-valued bilinear form.

The identification of $\VF, \hlfempty$ with the underlying rational quadratic space $\VQ, \blfempty$ induces am embedding of the 
unitary group $\Ug(V)$ into the special orthogonal group $\SO(V)$ associated with the bilinear form $\blfempty$. 
Further, we consider $\VQR, \blfempty$ as a real quadratic space underlying the complex hermitian space $\VFC, \hlfempty$, and we may, in particular, 
identify $\Ug(V)(\R)$  with a subgroup of $\SO(V)(\R)$.  

Similarly, a hermitian lattice $L$ in $\VF, \hlfempty$ is also a lattice in $\VQ, \blfempty$ as a quadratic $\Z$-module with
 $L \otimes_\Z \Q = \VQ$ and the quadratic form $\QfNop(\cdot)\mid_L$.
The dual of $L$ as a hermitian lattice over $\OF$ and as a quadratic lattice over $\Z$ is the same, namely $L'$.
 Thus, $L'/L$ is the discriminant group, either way.
The discriminant kernel $\GammaU{L} \subset \Ug(L)$ is a subgroup of the discriminant kernel $\GammaO{L} \subset \SO(L)$.  

On the complex hermitian space $\VFC$, of course, complex numbers act as scalars. If, however we consider the underlying real vector space, 
 an element of $\C\setminus\R$ induces a non-trivial endomorphism.  For this reason, we introduce the following notation
\begin{definition}\label{def:notationhat}
Let $\mu$ be in $\C \setminus \R$. We denote by $\hat\mu$ the endomorphism of $\VQR, \blfempty$ induced from the 
scalar multiplication with $\mu$ in the complex hermitian space $\VFC$, $\hlfempty$. 
For typographic reasons, the endomorphism induced by the complex unit $i$ is denoted $\hat\imath$. 
Note that $\hat\imath \in \SO(V)(\R)$. 
\end{definition}
Clearly, if $\mu \in \F$, then $\hat\mu$ is already an endomorphism of $\VQ$, $\blfempty$ defined over $\Q$.

\paragraph{Embedding of symmetric domains}
If $\mathcal{C}_{\Ug}$ is a maximal compact subgroup of $\Ug(V)(\R)$, embedded into $\SO(V)(\R)$, there exists a maximal compact subgroup $\mathcal{C}_{\SO}$ of $\SO(V)(\R)$ with $\mathcal{C}_{\Ug} \hookrightarrow \mathcal{C}_{\SO}$. 
 Then, we may embed the respective symmetric domains given by the group quotients
\begin{equation}
\label{eq:Grp_quotients}
 \Ug(V)(\R) / \mathcal{C}_{\Ug} \hookrightarrow \SO(V)(\R) / \mathcal{C}_{\SO}.
\end{equation}
The quotient on the left hand side of \eqref {eq:Grp_quotients} is isomorphic to a Grassmannian $\GrU$ consisting of positive definite lines 
$\C z \subset \VFC$, with $\hlf{z}{z}>0$. Thus, \eqref{eq:Grp_quotients} induces an embedding of  $\GrU$ into $\GrO$. 
Further, since, clearly,  $\GrU \simeq \coneU$, we may embed  $\coneU$ into $\GrO$, identifying an element $[z]\in\coneU$ with a subspace $v = \R X_L + \R Y_L$ contained in $\GrO$. With the bijection between $\GrO$ and $\coneO$ we finally get an embedding of $\coneU$ into $\coneO$.
A priori, this embedding is merely real analytic. 
However, by carefully choosing the oriented basis vectors $X_L$, $Y_L$ for the image of $[z]$ in $\GrO$, it can realized as a holomorphic embedding.
Then, with suitable coordinates, we also obtain an embedding of $\HU$ into the tube domain $\HO$.

To summarize, the embedding in \eqref{eq:Grp_quotients} induces maps at all levels of the following diagram: 
 \begin{equation}\label{eq:diagembed}
\xymatrix{
**[l] \C z\in \GrU \ar@{^(->}[rr] & & **[r] \GrO \ni v  \\
**[l] [z]\in \coneU \ar@{<->}[u] \ar@{^(->}[rr]^{\alpha} \ar@{.>}[rru] & &
**[r] \coneO \ar@{<->}[u] \ni [Z_L]\\
**[l](\tau,\sigma)\in \HU \ar@{<->}[u] \ar@{^(->}[rr]& & **[r] \HO
\ar@{<->}[u] \ni Z.
}
\end{equation}
To facilitate notation, we denote all these maps by $\alpha$. Which of them is the actual map under 
consideration should always be clear from the context.

\paragraph{Choice of cusp}
We want to explicitly describe the embedding of the Siegel domain model $\HU$ into the tube domain model $\HO$.
Assume that we are given $\HU$ with the choice of an isotropic vector $\ell \in L$ and of $\ell' \in L'$ with 
$\hlf{\ell}{\ell'} \neq 0$ made once and for all. 

For the construction of $\HO$ a choice of isotropic vector $e_1$ from $L$ is required, where,
 now, $L$ is viewed as a quadratic lattice over $\Z$.
We set $e_1 \vcentcolon = \ell$. 
With this choice, we associate to $\ell$ the cusp at infinity for both $\HU$ and $\HO$.
\begin{rmk*}
With this definition, the parabolic subgroup $P(\ell) \subset \Ug(V)$ stabilizing $\ell$ is mapped into 
the stabilizer of $e_1$ in $\SO(V)$. In particular, the elements of the Heisenberg group $H(\ell)$ 
are mapped to transformations generated by Eichler elements in $\SO^+(V)$. For example, it is easily verified that a translation of the forms
$[h,0]$ can be identified with an Eichler element of the form $E(\ell, \frac{h}{2}\hat\imath\ell)$, with the notation from \cite{Br02}, p.\ 86.
\end{rmk*}  

For each $[z] \in \coneU$ we have a unique representative $z \in \VFC$, with $z = z(\tau,\sigma)$ of the form \eqref{eq:defz}.
 Then, the line $\C z$, considered as a subspace of the real quadratic space $\VQR, \blfempty$, is a subspace $v \in \GrO$.
 For each $v$ the choice of an oriented basis $(X_L, Y_L)$, of the form \eqref{eq:XLYLdef}, determines an element of $[X_L + iY_L = Z_L ]$ in $\coneO$.
Next, we want to fix this choice. 

\paragraph{Complex structure}
 Consider the following diagram: 
\begin{equation}\label{eq:diagcmplx}
\begin{xy}
\xymatrix{
z \ar[r]  \ar[d]_i &  (X_L, Y_L) \ar[d]  \ar[r] &  Z_L  \ar[d]^i & **[l] = \phantom{-}X_L + iY_L\\
iz \ar[r] & (\hat\imath X_L, \hat\imath Y_L) \ar[r]  & i Z_L & **[l] = -Y_L + iX_L
}
\end{xy}
\end{equation}
To the left and to the right of the diagram, the complex unit $i$ acts as a scalar of the complex spaces $\VFC, \hlfempty$ and  $\VQC$, respectively.
By definition it acts as the endomorphism $\hat\imath$ on the vectors $X_L$, $Y_L$ in the real space $\VQR, \blfempty$. 
Note that all arrows in \eqref{eq:diagcmplx} represent $\R$-linear maps.
Thus, if \eqref{eq:diagcmplx} commutes, the following diagram also commutes 
for every $\mu \in \C\setminus\{0\}$
\[ 
\begin{xy}
\xymatrix{ 
[z] \ar@{=}[d]  \ar[r]^\alpha &  [Z_L] \ar@{=}[d]\\
[\mu z] \ar[r]^\alpha & [\mu Z_L]
}
 \end{xy}
\]
and the embedding $\alpha: \coneU \hookrightarrow \coneO$, $[z] \mapsto [Z_L]$ is 
a homomorphism between the complex projective spaces $\PFR$ and $\PQC$. 
Moreover, the induced embedding between the affine models $\HU$ and $\HO$ in \eqref{eq:diagembed} is then holomorphic. 

Since $X_L$ and $Y_L$ are contained in $\C z$, we set $X_L = \hat\psi z$. Then, clearly \eqref{eq:diagcmplx} commutes exactly if $\hat\imath X_L = - Y_L$.
Also, in this case,  $X_L \perp Y_L$ and $X_L^2  = Y_L^2 = \abs{\psi}^2 \hlf{z}{z}>0$, as required in \eqref{eq:XLYLdef}.

\paragraph{Normalization with respect to $e_1$}
Hence, with $X_L = \hat\psi z$, $Y_L = - \hat\imath\hat\psi z$ the point $[Z_L = X_L + iY_L] \in \PQC$ lies in the positive cone $\mathcal{K}^\pm$ of the zero quadric $\mathcal{N}$. 
We may also assume that it lies in the correct connected component $\coneO$. 

 For the vector $Z_L$ to be the 
unique representative for $[Z_L]$ in the image of $\HO$ under map $\mathcal{H}^\pm \rightarrow \mathcal{K}^\pm$ from \eqref{eq:mapHOconeO}, we must 
also have
\[
\blf{X_L}{e_1} = 2\Re\hlf{\psi z}{\ell} =  1 \quad\text{and}\quad
\blf{Y_L}{e_1} = 2\Re\hlf{-i\psi z}{\ell} = 0.
\]
Thus, we set $\psi = \frac12 \hlf{\ell'}{\ell}^{-1}$.
 
To summarize, for each $[z]$, a suitable choice of basis vectors for its image $v$ in  $\GrO$, is given by
\begin{equation}\label{eq:mapztoXLYL}
X_L = \left(\frac{1}{2 \hlf{\ell'}{\ell}}\right)\sphat \,z\quad\text{and}\quad 
Y_L = \left(\frac{- i}{2\hlf{\ell'}{\ell}}\right)\sphat\, z, 
\end{equation}
where $z = z(\tau,\sigma)$ as in \eqref{eq:defz} is the usual representative for $[z]$.

\begin{lemma} \label{lemma:pbmf} 
The pullback of holomorphic (meromorphic) automorphic form on $\HO$ is a holomorphic (meromorphic) automorphic form on $\HU$.
\end{lemma}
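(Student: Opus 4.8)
My plan is to show that, for an automorphic form $F$ of weight $k$ on $\HO$, the pullback $F\circ\alpha$ is an automorphic form of the \emph{same} weight $k$ on $\HU$; the weight is preserved precisely because the two automorphy factors will turn out to agree along the embedded domain. I would split the argument into two independent parts: preservation of holomorphy (respectively meromorphy) under composition with $\alpha$, and preservation of the transformation law. The heart of the matter will be the identity $J(\gamma;\alpha(\tau,\sigma))=j(\gamma;\tau,\sigma)$ for $\gamma\in\Ug(V)$, relating the orthogonal automorphy factor $J$ to the unitary one $j$.

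For the analytic part I would invoke the fact, established in the paragraph on the complex structure, that the choice of oriented basis in \eqref{eq:mapztoXLYL} makes $\alpha\colon\HU\to\HO$ a holomorphic embedding. A holomorphic $F$ therefore pulls back to a holomorphic $F\circ\alpha$. For a meromorphic $F$, writing it locally as $g/h$ with $g,h$ holomorphic, the pullback $F\circ\alpha=(g\circ\alpha)/(h\circ\alpha)$ is again meromorphic as soon as $h\circ\alpha\not\equiv0$, that is, as soon as $\alpha(\HU)$ is not contained in the polar locus of $F$; since $\alpha$ is an embedding and the polar locus is a proper analytic subset of $\HO$, this holds (and holds in particular for the Borcherds products of the next section, whose divisors meet $\alpha(\HU)$ properly).

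For the transformation law I would use equivariance. The inclusion $\Ug(V)\hookrightarrow\SO(V)$ arising from $\blfempty=\tr_{\F/\Q}\hlfempty$ restricts to $\GammaU{L}\subset\GammaO{L}$, and the embedding $\alpha$ of \eqref{eq:diagembed} is equivariant, $\alpha(\gamma(\tau,\sigma))=\gamma\,\alpha(\tau,\sigma)$ for $\gamma\in\Ug(V)(\R)$. Hence, if $F$ is automorphic for a modular group $\Gamma\subset\GammaO{L}$, then for every $\gamma$ in the unitary modular group $\Gamma\cap\Ug(V)(\R)$ one has
\[
(F\circ\alpha)(\gamma(\tau,\sigma))=F\bigl(\gamma\,\alpha(\tau,\sigma)\bigr)=J\bigl(\gamma;\alpha(\tau,\sigma)\bigr)^{k}\,(F\circ\alpha)(\tau,\sigma),
\]
so everything reduces to the promised identity. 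To prove it I would read off from \eqref{eq:mapztoXLYL}, with $\psi=\tfrac12\hlf{\ell'}{\ell}^{-1}$, that the representative $Z_L=X_L+iY_L$ lies in the $(+i)$-eigenspace of $\hat\imath$ in $\VQC$ --- a copy of $\VFC$ --- where it corresponds to $z/\hlf{\ell'}{\ell}$. Every $\gamma\in\Ug(V)$ is $\C$-linear on $\VFC$, so it commutes with $\hat\imath$, preserves this splitting, and acts on the holomorphic part through its original action. Since the $\C$-bilinear extension of $\blfempty$ pairs the $(+i)$- and $(-i)$-eigenspaces precisely through $\hlfempty$, and since $e_1=\ell$, I would then compute
\[
J(\gamma;\alpha(\tau,\sigma))=\blf{\gamma Z_L}{e_1}=\frac{\hlf{\gamma z}{\ell}}{\hlf{\ell'}{\ell}}=j(\gamma;\tau,\sigma),
\]
the outer equalities being the normalizations $\blf{Z_L}{e_1}=1$ and $\hlf{z}{\ell}=\hlf{\ell'}{\ell}$ that define the two models. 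This yields $(F\circ\alpha)(\gamma(\tau,\sigma))=j(\gamma;\tau,\sigma)^{k}(F\circ\alpha)(\tau,\sigma)$, as required.

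The main obstacle will be exactly this last computation, where the complex structures on $\VFC$ and on $\VQC$ must be carefully disentangled; it is also the point at which the earlier normalizations ($e_1=\ell$ and $\psi=\tfrac12\hlf{\ell'}{\ell}^{-1}$) are seen to be designed precisely so that $J$ and $j$ coincide and the pullback keeps its weight. Everything else is formal. Finally, although the lemma only asserts preservation of the automorphic condition, the same equivariance sends the cusp $\ell$ of $\HU$ to the rational boundary point $e_1$ of $\HO$, so that regularity at the cusps is inherited as well; in particular the pullback of a modular form is again a modular form.
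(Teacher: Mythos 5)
Your proof is correct, but it takes a genuinely different route from the paper. The paper never touches the automorphy factors: it lifts an automorphic form $f$ of weight $k$ on $\HO$ to a holomorphic function $\tilde f$ on the preimage cone $\widetilde\coneO\subset\VQC$ that is $\C$-homogeneous of degree $-k$ and $\Gamma$-invariant, observes that the induced map $\tilde\alpha\colon\VFC\to\VQC$ is $\C$-linear (this is exactly what the compatibility of complex structures in \eqref{eq:diagcmplx} and the choice \eqref{eq:mapztoXLYL} were engineered for), and concludes that $\tilde\alpha^*\tilde f$ is again holomorphic, homogeneous of degree $-k$, and invariant under $\Gamma'=\Gamma\cap\Ug(L)$, which has finite index in $\GammaU{L}$; weight preservation is automatic from homogeneity. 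You instead work on the affine models and prove the explicit identity $J(\gamma;\alpha(\tau,\sigma))=j(\gamma;\tau,\sigma)$, and your computation is sound: with $\psi=\tfrac12\hlf{\ell'}{\ell}^{-1}$ one has $\blf{\gamma Z_L}{e_1}=2\Re\bigl(\psi\hlf{\gamma z}{\ell}\bigr)+2i\Im\bigl(\psi\hlf{\gamma z}{\ell}\bigr)=\hlf{\gamma z}{\ell}/\hlf{\ell'}{\ell}$, using that $\gamma\in\Ug(V)$ commutes with $\hat\imath$ and that $\hlf{z}{\ell}=\hlf{\ell'}{\ell}$, $\blf{Z_L}{e_1}=1$ are the two normalizations. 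Your identity is the paper's $\C$-linearity-plus-normalization argument unpacked: the paper's version is shorter and avoids disentangling the two complex structures, while yours yields a concrete dividend --- the equality of the two automorphy factors along $\alpha(\HU)$, which makes the preservation of the weight visible rather than implicit. One small omission on the group-theoretic side: you should note, as the paper does, that $\Gamma\cap\Ug(V)$ has finite index in $\GammaU{L}$, so that the pullback is automorphic for a unitary modular group in the paper's sense.

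One step as written is a non sequitur: in the meromorphic case you argue that $\alpha(\HU)$ cannot lie in the polar locus ``since $\alpha$ is an embedding and the polar locus is a proper analytic subset of $\HO$.'' That does not follow --- $\alpha(\HU)$ has positive codimension in $\HO$ (complex codimension $m$), so it could a priori be contained in a divisor; being an embedding is irrelevant. The correct safeguard is the one you give parenthetically: for the forms actually pulled back in this paper the polar locus is a union of sets $\lambda^\perp$, and by lemma \ref{lemma:Pb_Heeg_lambda} each $\lambda^\perp$ meets $\alpha(\HU)$ in a proper analytic subset, so the pullback is honestly meromorphic. (The paper's own ``the proof for meromorphic $f$ is similar'' glosses over the same point, so this is a flaw in justification rather than in the result.) Your closing claim that regularity at the cusps is inherited is an extra assertion beyond the lemma and would need the boundary analysis of lemma \ref{lemma:limit_embed} to substantiate; since the lemma only concerns automorphy, this does not affect your proof.
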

\begin{proof}
Denote by $\widetilde\coneO$ and $\widetilde\coneU$ the preimages of $\coneO$ and $\coneU$ under the canonical projections 
$\operatorname{pr}_\F: \VFC \rightarrow \PFR$ and $\operatorname{pr}_\Q: \VQC \rightarrow \PQC$, respectively. 
Denote by $\tilde\alpha$ the map $\VFC \rightarrow \VQC$ induced from $\alpha$.
Note that with the above choice, $\tilde\alpha$ is $\C$-linear. 

Let $f:\HO \rightarrow \C$ be a holomorphic automorphic form of weight $k$ for some modular group $\Gamma \subset \GammaO{L}$. Then, $f$ can be identified with a holomorphic function $\tilde f: \widetilde\coneO \rightarrow \C$, which is $\C$-homogeneous of weight $-k$ and invariant under the operation of $\Gamma$ on $\VQC$.
 
Consider the pullback $\tilde\alpha^*\tilde f$. Since $\tilde\alpha$ is $\C$-linear, the pullback is holomorphic and also $\C$-homogeneous of weight $-k$. Further, it is invariant under $\Gamma' \vcentcolon = \Gamma \cap \Ug(L)$. Clearly $\Gamma'$ has finite index in $\GammaU{L}$, as the index of $\Gamma$ in $\GammaO{L}$ is finite and $\GammaU{L} \subset \GammaO{L}$. Then, $\alpha^*f$, the attached function on $\HU$, is a holomorphic automorphic form of weight $k$ on $\Gamma'$. 
The proof for meromorphic $f$ is similar.
\end{proof}

\paragraph{A basis for the hyperbolic part} 

In the following, we introduce coordinates for the tube domain,
in order to explicitly determine the image in $\HO$ of a point $(\tau, \sigma) \in \HU$. 

For this, we need a basis $e_1, \dotsc, e_4$ for the four dimensional real subspace of $\VQR$, $\blfempty$, defined by the $\C$-span of $\ell$ and $\ell'$.
One basis vector is $e_1 = \ell$, of course. 
We put $e_3 = -\hat\zeta\ell$, where $\zeta$ is the basis element of $\OF$ defined in \eqref{eq:def_zeta}.
Then, $F = \Q e_1 \oplus \Q e_3$ is a maximal  totally isotropic rational subspace.
There is a complementary maximal isotropic subspace $F'$,
such that $F \oplus F' = \operatorname{span}_{\F}\{ \ell, \ell'\}$, considered as a subspace of $\VQ$.

In  fact, there is a basis $e_2$, $e_4$ for $F'$, such that for $e_1, \dotsc, e_4$ the following holds:
\[
 \begin{aligned}
 \blf{e_1}{e_2} & = 1, \quad \blf{e_3}{e_4} = 1, \\ 
  \blf{e_i}{e_i} & = 0 \quad (i=1,\dotsc, 4), \\
   \blf{e_i}{e_j} & = 0 \quad  (i=1,2,\; j = 3,4).
\end{aligned}
\] 
It is easily verified that these conditions are satisfied by the following vectors:
\begin{equation}\label{eq:basis_oddevenD}
e_1 = \ell, \; e_2 = \frac{\zeta}{\delta \hlf{\ell'}{\ell}} \ell',\;
e_3 = - \zeta \ell, \; e_4 = \frac{1}{\delta\hlf{\ell'}{\ell}}\ell'.
\end{equation}
Note that if $D_\F \equiv 0\mod{2}$, then $\zeta\delta^{-1} = \frac{1}{2}$.
Clearly, since $L$ is an $\OF$-module, both $e_1$ and $e_3$ are contained in $L$.
Also $e_2, e_4 \in L'$, as indeed, $\hlf{e_2}{\ell}, \hlf{e_4}{\ell} \in \DF^{-1}$.
In this basis, we can write $Z_L$ in the form 
\[
Z_L = - \left(Z_1 \cdot Z_2 + \Qf{\mathfrak{z}}\right) e_1 + e_2 + Z_1 e_3 + Z_2 e_4 + \mathfrak{z}, 
\]
with $\mathfrak{z} = \mathfrak{x} + i\mathfrak{y} \in D\otimes_\Z\C$ negative definite.

\paragraph{Embedding into the tube domain}
Now we calculate the components of $X_L$ and $Y_L$ in terms of $\tau$ and $\sigma$.
In the following calculation all complex scalars come from the complex structure of $\VFC$.
As there is no possibility of confusion, no special notation is required for these. %TYPO ``,'' OWNCORR
In the end result, all coordinates will be real. For $X_L$, we have
\begin{align}
 X_L(z) & = \frac{1}{2\hlf{\ell'}{\ell}} z =
 -\frac{\tau\delta}{2}\ell +
 \frac{1}{2\hlf{\ell'}{\ell}} \ell'  + \mathfrak{x}(\sigma) \notag \\
& = \frac{\zeta}{\delta\hlf{\ell'}{\ell}} \ell' -
  \Re\zeta\frac{1}{\delta\hlf{\ell'}{\ell}} \ell' 
  + \Re\tau( - \zeta\ell) +
 \bigl( \Re\zeta\cdot\Re\tau  + \frac12\Im\tau\abs{\delta}\bigr)\ell  +
 \mathfrak{x}(\sigma) \notag \\
 & = e_2  - \Re\zeta e_4 + \Re\tau e_3   +
  \bigl( \Re\zeta\cdot\Re\tau  + \Im\zeta\cdot\Im\tau\bigr)e_1 +
 \mathfrak{x}(\sigma),\label{eq:XLofz}
\end{align}
since $\Im \zeta = \frac12 \abs{\delta}$  and $\Re\zeta$ is either $0$ or $\frac12$, depending on the discriminant of $\F$.

Similarly, for $Y_L$ we have
\begin{align}
 Y_L(z) & = \frac{-i}{2\hlf{\ell'}{\ell}} z =
 \frac{\abs{\delta}}{2\delta\hlf{\ell'}{\ell}}\ell'  
- \Re\tau\frac{\abs{\delta}}{2}\ell  
- \Im\tau\frac{\delta}{2}\ell + \mathfrak{y}(\sigma)\notag\\
\label{eq:YLofz} & = \Im\zeta e_4 + \Im\tau e_3 -
\bigl(\Re\zeta\cdot\Im\tau - \Im\zeta\cdot\Re\tau \bigr) e_1 +
\mathfrak{y}(\sigma).
\end{align}
Now, $Z_L = X_L + iY_L$ can be written in the form
\begin{equation}\label{eq:ZLofz}
Z_L(\tau, \sigma) = \bar\zeta\tau e_1 + e_2 + \tau e_3 - \bar\zeta e_4 +
\mathfrak{z}(\sigma).
\end{equation}
Note that all coordinates in this equation are scalars of $\VQC$.
The corresponding element $Z$ in the tube domain is given by
\begin{equation}\label{eq:Zoftausigma}
Z(\tau,\sigma) = \tau e_3 -\bar\zeta e_4 + \mathfrak{z}(\sigma) = (\tau, -\bar\zeta, \mathfrak{z}). 
\end{equation}
\begin{rmk*}
 Note that the imaginary part of the $e_3$- and $e_4$-components of $Z$ is positive, due to the choice of sign for $e_3$ in \eqref{eq:basis_oddevenD}.
 Thus $\HU$ is mapped into the connected component of $\mathcal{K}$ containing the point $[1,1,i,i,0]$.  
\end{rmk*}

Finally, from \eqref{eq:mapztoXLYL}, the image of $\sigma$ under $\alpha$, is given by  $\mathfrak{z} = \mathfrak{x} + i\mathfrak{y}$, with
\[
\begin{gathered}
\mathfrak{x} = \left(\frac{1}{2 \hlf{\ell'}{\ell}}\right)\sphat \,\sigma,  \quad
\mathfrak{y}  = \left(\frac{- i}{2\hlf{\ell'}{\ell}}\right)\sphat\, \sigma.
\end{gathered}
\]
\begin{rmk*}
By the definition of $\mathcal{K} \subset \quadN$, the image $\mathfrak{z}(\sigma)$ of $\sigma$ satisfies the following set of equations,
which further restrict the geometric locus in $\HO$ of the embedded image of $\HU$:
\[
\mathfrak{z}(\sigma)^2 = \mathfrak{x}(\sigma)^2 - 
\mathfrak{y}(\sigma)^2 = 0, \quad\text{and}\quad
\blfp[\big]{\mathfrak{z}(\sigma) , \overline{\mathfrak{z}}(\sigma)} =
\mathfrak{x}(\sigma)^2  + \mathfrak{y}(\sigma)^2 
 < 0.
\]  
\end{rmk*}

\paragraph{The embedding on the boundary}
\begin{prop}
Boundary points of $\HU$ are mapped to one-dimensional boundary components of
$\HO$.
The boundary point attached to the primitive isotropic lattice vector $\ell$ is
mapped into the boundary component attached to the rational isotropic subspace $F
= \Q\ell \oplus \Q\hat\zeta\ell$ of $\VQ, \blfempty$.
\end{prop}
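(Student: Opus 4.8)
The plan is to verify the two claims of the proposition separately, using the explicit parametrization of the embedding that was set up in the preceding paragraphs. Recall that boundary points of $\HU$ correspond to $\Gamma$-equivalence classes of isotropic one-dimensional $\F$-subspaces $I = \F\kappa \subset \VF$, and each such $I$ gives a boundary point $I_\C = [\kappa] \in \PFR$ of $\coneU$. The key observation is that the map $\alpha: \coneU \hookrightarrow \coneO$ is induced by the $\C$-linear map $\tilde\alpha: \VFC \to \VQC$ from the proof of Lemma \ref{lemma:pbmf}, and this map extends continuously to the boundary. So I would first analyze the image under $\tilde\alpha$ of an isotropic vector $\kappa$ with $\hlf{\kappa}{\kappa} = 0$, and show that the resulting boundary point of $\coneO$ is \emph{generic} rather than special, which by the classification in the remark preceding the ``Boundary components'' heading means it must lie in a one-dimensional boundary component attached to a two-dimensional totally isotropic real subspace $F$ of $\VQR$.

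For the first, general claim, the crucial point is that a hermitian-isotropic line $\F\kappa$ in $\VF$ becomes, after restriction of scalars, a \emph{two-dimensional} totally isotropic $\Q$-subspace $F = \Q\kappa \oplus \Q\hat\zeta\kappa$ of $\VQ, \blfempty$. I would check that both $\kappa$ and $\hat\zeta\kappa$ are $\blfempty$-isotropic and mutually orthogonal: since $\blfempty = \tr_{\F/\Q}\hlfempty$ and $\hlf{\kappa}{\kappa}=0$, we get $\blf{\kappa}{\kappa} = 2\Re\hlf{\kappa}{\kappa} = 0$, and similarly $\blf{\hat\zeta\kappa}{\hat\zeta\kappa} = 2\Re\bigl(\zeta\bar\zeta\hlf{\kappa}{\kappa}\bigr) = 0$ and $\blf{\kappa}{\hat\zeta\kappa} = 2\Re\bigl(\bar\zeta\hlf{\kappa}{\kappa}\bigr) = 0$. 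Since $\zeta \notin \R$, the vectors $\kappa$ and $\hat\zeta\kappa$ are $\R$-linearly independent, so $F$ is genuinely two-dimensional and totally isotropic. I would then show that the boundary point $\alpha(I_\C)$ is represented by a vector in $F \otimes_\R \C$ and is generic; both follow from tracking the limit of $Z_L(\tau,\sigma)$ as $[z] \to [\kappa]$, which lands in the complexification of $\F\kappa$, precisely the one-dimensional boundary component attached to $F$.

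For the second, more specific claim, I would simply specialize the above computation to $\kappa = \ell$ and $F = \Q\ell \oplus \Q\hat\zeta\ell = \Q e_1 \oplus \Q e_3$, using the basis vectors fixed in \eqref{eq:basis_oddevenD} where $e_1 = \ell$ and $e_3 = -\hat\zeta\ell$. The explicit formula \eqref{eq:ZLofz} for $Z_L(\tau,\sigma)$ makes this transparent: as $\tau \to i\infty$ (the cusp at infinity of $\HU$), the dominant terms are $\bar\zeta\tau e_1 + \tau e_3$, whose real and imaginary parts span exactly $\R e_1 + \R e_3 = F \otimes_\R \R$. Rationality of the boundary component is immediate, since $F$ is defined over $\Q$ by construction (as $\hat\zeta$ is defined over $\Q$ when $\zeta \in \F$, noted after Definition \ref{def:notationhat}).

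I expect the main obstacle to be the rigorous justification that the limiting boundary point is \emph{generic} rather than special, i.e.\ that the embedding carries the zero-dimensional boundary structure of $\coneU$ into the one-dimensional boundary stratum of $\coneO$ in the correct way. The subtlety is that $\alpha$ is defined on the open domains via the normalization \eqref{eq:mapztoXLYL} involving the factor $\hlf{\ell'}{\ell}^{-1}$, which degenerates at the cusp, so I would need to rescale the representative $Z_L$ appropriately before taking the limit to see that the limit vector is nonzero and spans the expected isotropic plane. Establishing that this rescaled limit is well-defined on $\PQC$ and independent of the approach direction within $\coneU$ is the technical heart of the argument; once that is in place, the identification of the limiting locus with the boundary component of $F$, and hence the proof of both claims, is a direct consequence of the formulas \eqref{eq:ZLofz} and \eqref{eq:basis_oddevenD}.
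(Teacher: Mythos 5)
Your proposal takes the same route as the paper's proof: restriction of scalars turns a hermitian-isotropic line into a two-dimensional totally isotropic subspace of the underlying rational quadratic space, and the specialization to $\ell$ with the basis \eqref{eq:basis_oddevenD} identifies $F = \Q\ell \oplus \Q\hat\zeta\ell = \Q e_1 \oplus \Q e_3$. Two caveats, neither fatal. First, the general claim is about \emph{all} boundary points of $\coneU$, i.e.\ isotropic lines $\C x$ with $x \in \VFC$, not only the rational lines $I = \F\kappa \subset \VF$ (the latter are the cusps); your opening paragraph restricts to the rational case. The fix is immediate and is exactly what the paper does: replace $\kappa$ by $x$ and $\hat\zeta$ by any $\hat\xi$ with $\xi \notin \R$, obtaining the totally isotropic real plane $\R x \oplus \R\hat\xi x$; rationality of the plane is then a feature only of rational $x$, which is all the second claim requires.

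Second, what you flag as the technical heart --- rescaling $Z_L$ before taking a limit and checking independence of the approach direction, in order to establish genericity --- is not actually needed. Since $\tilde\alpha$ is $\C$-linear on all of $\VFC$ (the normalization constant $\psi = \frac12\hlf{\ell'}{\ell}^{-1}$ from \eqref{eq:mapztoXLYL} is a fixed scalar and does not degenerate at the cusp; only the choice of affine representative in $\HO$ breaks down, which is irrelevant projectively), the induced map $\PFR \rightarrow \PQC$ is already defined at every isotropic $[x]$, so no limit along $\coneU$ has to be taken at all. Genericity is then automatic: the image vector $\hat\psi x - i\,\hat\imath\hat\psi x$ has real and imaginary parts $\hat\psi x$ and $-\hat\imath\hat\psi x$, which are $\R$-linearly independent for every $x \neq 0$ because the complex structure $\hat\imath$ has no real eigenvector, and this independence is preserved under multiplication by nonzero complex scalars; hence the image point cannot be special and lies in the one-dimensional boundary component attached to the plane above. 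This short observation is essentially the entirety of the paper's proof. Your limit computation via \eqref{eq:ZLofz}, giving $[\bar\zeta e_1 + e_3]$ as $\tau \to i\infty$, is a correct cross-check and in fact reappears in the paper as \eqref{eq:limit_alpha} in the proof of lemma \ref{lemma:limit_embed}, but it is not required for this proposition.
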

\begin{proof}
The boundary points of $\HU$ can be described by isotropic lines of the form $\C x$, 
with $x\in \VFC$, $x\neq 0$ and $\hlf{x}{x}=0$. Then, in the quadratic space $\VQR, \blfempty$, the two vectors $x$ and $\hat\xi x$, 
are isotropic and, for $\xi\not \in\R$, linear independent. Hence,
$F_\R \vcentcolon = \R x \oplus \R\hat\xi x$ is a two-dimensional isotropic subspace of $\VQR, \blfempty$ 
and corresponds to a boundary component of $\HO$. 

In particular, $F\vcentcolon = \F\ell$ is a two-dimensional isotopic subspace of $\VQ, \blfempty$, 
 spanned by $\ell$ and $\hat\zeta\ell$, and defines a rational boundary component.
\end{proof}
We now consider how the neighborhood of a cusp behaves under the embedding
$\alpha$. It suffices to consider the cusp at infinity.
\begin{lemma}\label{lemma:limit_embed}
 Consider the boundary point at infinity of $\HU$, attached to $\ell$. The inverse image of every open
neighborhood of the boundary point $\alpha(\infty)$ in the closure of $\coneO$
contains an open neighborhood of infinity in $\HU \cup \{ \infty \}$. 
\end{lemma}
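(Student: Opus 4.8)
The plan is to make the topology on both sides completely explicit and then reduce the claim to a single inequality. On the unitary side, an open neighborhood of $\infty$ in $\HU \cup \{\infty\}$ contains a set $\overline{\HU^C}$ for some $C>0$, as set out in Definition \ref{def:setHUC}; thus it suffices to show that for every open neighborhood $\mathcal{U}$ of $\alpha(\infty)$ in the closure of $\coneO$, there is a constant $C>0$ with $\alpha\bigl(\overline{\HU^C}\bigr) \subseteq \mathcal{U}$. On the orthogonal side, $\alpha(\infty)$ is the point of the boundary component attached to the rational isotropic subspace $F = \Q\ell \oplus \Q\hat\zeta\ell = \Q e_1 \oplus \Q e_3$, by the proposition just proved. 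The natural way to detect that a point of $\coneO$ is close to this boundary component is through the invariant $\hlf{z}{z}/\abs{\hlf{z}{\ell}}^2$ used in Definition \ref{def:setHUC}, which I expect to translate, via the explicit coordinates \eqref{eq:Zoftausigma}, into a quantity measuring the size of $\Im\tau$ relative to the $e_3$-direction.

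First I would translate the defining inequality of $\HU^C$ through the embedding. Using \eqref{eq:ZLofz}, the representative $Z_L(\tau,\sigma) = \bar\zeta\tau e_1 + e_2 + \tau e_3 - \bar\zeta e_4 + \mathfrak{z}(\sigma)$ has the property that $\blf{Z_L}{\overline{Z}_L}$ is, up to the factor $\abs{\hlf{\ell'}{\ell}}^{-2}$ coming from the normalization $\psi = \tfrac12\hlf{\ell'}{\ell}^{-1}$, exactly $\hlf{z}{z} = 2\abs{\delta}\abs{\hlf{\ell'}{\ell}}^2\Im\tau + \hlf{\sigma}{\sigma}$. So the condition $(\tau,\sigma) \in \HU^C$, namely $2\abs{\delta}\abs{\hlf{\ell'}{\ell}}^2\Im\tau + \hlf{\sigma}{\sigma} > C$, becomes a lower bound on $\Im\tau$ once $\sigma$ is controlled, and in the tube-domain coordinates \eqref{eq:Zoftausigma}, $Z(\tau,\sigma) = (\tau, -\bar\zeta, \mathfrak{z}(\sigma))$, this forces $\Im\tau \to \infty$. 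Next I would identify, in the orthogonal setting, a cofinal system of neighborhoods of the one-dimensional boundary component attached to $F$: by the standard description of the Baily-Borel topology for $\coneO$ near a rational isotropic subspace (cf.\ \cite{Br02}, chapter 3, and \cite{BrFr}, section 2), such neighborhoods are swept out by requiring that the appropriate imaginary parts grow, precisely the $e_3$-component $\Im\tau$ of $Z$ here. The core of the argument is then to show that the growth of $\Im\tau$ forced by $(\tau,\sigma)\in\overline{\HU^C}$ is enough to land inside a prescribed such neighborhood.

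The main obstacle I anticipate is bookkeeping the $\sigma$-direction correctly. The neighborhood $\HU^C$ is defined by a single inequality that couples $\Im\tau$ and $\hlf{\sigma}{\sigma}$, whereas a neighborhood of the boundary component in $\coneO$ also restricts the transverse $\mathfrak{z}(\sigma)$-coordinate. Because $\mathfrak{z}(\sigma)$ lies in the negative-definite block $D\otimes_\Z\C$ and satisfies $\blfp[\big]{\mathfrak{z}(\sigma),\overline{\mathfrak{z}}(\sigma)} = \hlf{\sigma}{\sigma} < 0$, the defining inequality of $\HU^C$ bounds $-\hlf{\sigma}{\sigma}$ by $2\abs{\delta}\abs{\hlf{\ell'}{\ell}}^2\Im\tau - C$, so that as we sit in $\overline{\HU^C}$ the admissible range of $\sigma$ widens with $\Im\tau$ rather than staying compact. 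I would therefore have to check that the orthogonal neighborhoods in question impose growth on $\Im\tau$ that dominates this widening — equivalently, that the invariant $\hlf{z}{z}/\abs{\hlf{z}{\ell}}^2$, whose level sets already define the $\HU^C$, matches (up to the fixed scalar $\abs{\hlf{\ell'}{\ell}}^{-2}$) the invariant cutting out the $\coneO$-neighborhoods of the boundary component attached to $F$.

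Finally I would assemble the pieces: given an arbitrary open neighborhood $\mathcal{U}$ of $\alpha(\infty)$, I pick a basic $\coneO$-neighborhood of the boundary component contained in $\mathcal{U}$, read off the threshold it imposes on $\Im\tau$ (and hence, via the matched invariant, on $2\abs{\delta}\abs{\hlf{\ell'}{\ell}}^2\Im\tau + \hlf{\sigma}{\sigma}$), choose $C$ to be that threshold, and conclude $\alpha\bigl(\overline{\HU^C}\bigr)\subseteq\mathcal{U}$. Since $\overline{\HU^C}$ is by definition an open neighborhood of $\infty$ in $\HU\cup\{\infty\}$, its preimage assertion is immediate, and the lemma follows. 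The only genuinely delicate point remains the compatibility of the two invariants, which the explicit coordinates \eqref{eq:Zoftausigma} and the normalization leading to \eqref{eq:mapztoXLYL} are designed to make transparent.
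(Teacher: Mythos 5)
Your proposal is correct and follows essentially the same route as the paper: it reduces to the basic neighborhoods $\overline{\HU^C}$ on the unitary side, uses the explicit fundamental system of neighborhoods of a generic boundary point from \cite{BrFr} on the orthogonal side, and matches the invariant $\hlf{z}{z}/\abs{\hlf{z}{\ell}}^2$ defining $\HU^C$ with the quantity $Y_1Y_2 + \Qf{\mathfrak{y}} = \Qf{Y}$ cutting out those neighborhoods (the paper's computation yields exactly $C = 4\abs{\hlf{\ell'}{\ell}}^2/\epsilon$, with the $V_\epsilon$-condition automatic since $Z_2 = -\bar\zeta$ is constant on $\alpha(\HU)$). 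The one delicate point you flag, the coupling of $\Im\tau$ with the widening $\sigma$-range, is resolved exactly as you predict, since both sides are level sets of the same invariant up to a fixed constant.
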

\begin{proof}
 Consider the two-dimensional isotropic subspace $F = \Q e_1 \oplus \Q e_2$ of $\VQ, \blfempty$ and let $F_\C = F\otimes_\Q\C$.
Let $[x]$ be a point in the one-dimensional boundary component of $\coneO$ defined by $F_\C$. 
Denote by $\pi$ the canonical projection from $\VQC$ to $\PQC$.
 In the zero quadric $\quadN$, a neighborhood of $[x]$ is a union of the form $U \cup V$, with $U$ open in $\coneO$ and $V$ a subset of
$\pi(F_\C) \cap \partial\coneO$, open with respect to the subset topology. 
A more precise description can be obtained as follows, cf.\ \cite{BrFr} section 3: 
We identify $F_\C \cap  \pi^{-1}(\partial\coneO)$ with the upper half-plane $\Hp \subset \C$ via
$\tau' \mapsto -\tau' e_1 + e_3 = x$. Then, a fundamental system of neighborhoods for $x$ is given by $U_\epsilon \cup V_\epsilon$, $\epsilon>0$, with
\begin{gather*}
V_\epsilon(x) = \left\{ Z_2 \in \Hp\,;\; \abs{Z_2 - \tau'} < \epsilon   \right\}, \label{eq:V_eps} \\
U_\epsilon(x) = \left\{  (Z_1, Z_2, \mathfrak{z}) \in \HO \,;\; Z_2 \in V_\epsilon(x),\; Y_1 Y_2 + \Qf{\mathfrak{y}} > \epsilon^{-1} \right\}. \label{eq:U_eps}
\end{gather*}
Now, let $x$ be the image under $\alpha$ of the boundary point $\infty$ of $\HU$ in $\pi(F_\C) \cap \partial\coneO$. Since
\begin{equation}\label{eq:limit_alpha}
\lim_{t\rightarrow\infty}\alpha(z(it,\sigma)) = \lim_{t\rightarrow\infty}\left[ it\bar\zeta,1,it,-\bar\zeta,\alpha(\sigma) \right] = [\bar\zeta, 0, 1, 0,0],
\end{equation}
we have $x  = \bar\zeta e_1 + e_3$. 
Now, for every $Z\in\alpha(\HU)$, clearly $Z_2 = -\bar\zeta \in V_\epsilon(x)$ for all $\epsilon>0$.   
 For $Z \in \alpha({\HU})$, the imaginary part $Y$ is given by 
\[
Y = \Im\tau e_3 + \frac{\abs{\delta}}{2} e_4 + \Bigl( \frac{- i}
{2\hlf{\ell'}{\ell}}\Bigr)\sphat \sigma,
\]
for some $(\tau, \sigma) \in \HU$. Thus, if $Z \in U_\epsilon(x)\cap \alpha({\HU})$, we have
\[ 
\frac{1}{2}\Im\tau\abs{\delta} + \frac{\hlf{\sigma}{\sigma}}{4\abs{\hlf{\ell'}{\ell}}^2 } > \frac{1}{\epsilon}.  
\]
It follows that $(\tau, \sigma)$ is contained in one of the neighborhoods of
infinity $\HU^C$, as introduced in definition \ref{def:setHUC}, 
\[
 \HU^C = \bigl\{ (\tau, \sigma) \in \HU \,;\;  
2\Im\tau\abs{\delta}\abs{\hlf{\ell'}{\ell}}^2 + \hlf{\sigma}{\sigma} > C
\bigr\}.
\]
Where, in this case, $C = 4\abs{\hlf{\ell'}{\ell}}^2 /\epsilon$.
\end{proof}

\section{Weil representation and vector valued modular forms}\label{sec:weil_vv}

In this  section we recall some facts about the Weil representation and about 
vector valued modular forms.

\paragraph{The Weil representation}
Let $L$ be an even lattice over $\Z$ with symmetric non-de\-ge\-ne\-rate bilinear form $\blfempty$ and  
associated quadratic form $\QfNop(\cdot)$. Assume that $L$ has rank $2+2m$ and signature $(2,2m)$.

Then, the Weil representation of the metaplectic group $\Mp_2(\Z)$ on the group algebra $\C[L'/L]$ 
factors through $\SL_2(\Z)$. Thus, we have a unitary representation $\rho_L$ of $\SL_2(\Z)$ on $\C[L'/L]$, 
defined by the action of the standard generators of $\SL_2(\Z)$,
 $S = \left(\begin{smallmatrix} 0 & -1 \\ 1 & 0 \end{smallmatrix}\right)$ and 
$T=\left(\begin{smallmatrix} 1 & 1 \\ 0 & 1 \end{smallmatrix}\right)$, cf.\ \cite{Bo98}, section 4: 
\begin{equation}\label{eq:weilrepST} 
\begin{split}
\rho_L(T)\,\ebase_\gamma & = e\bigl( \Qf{\gamma}
\bigr)\ebase_\gamma, \\
\rho_L(S)\,\ebase_\gamma & = \frac{\sqrt{i}^{b-2}}{\sqrt{\abs{L'/L}}} 
\sum_{\delta \in L'/L}  e\bigl( -\blf{\gamma}{\delta} \bigr) \ebase_\delta,
\end{split}
\end{equation}
where $\ebase_\gamma$, $\gamma\in L'/L$ is the standard basis for $\C[L'/L]$.  
The negative identity matrix $-E_2 \in \SL_2(\Z)$ acts as $\rho_L(E_2)\ebase_\gamma = (-1)^{m-1} \ebase_{-\gamma}$.
We denote by $\rho_L^*$ the dual representation of $\rho_L$. 

\paragraph{Vector valued modular forms}
Let $\kappa \in \Z$ and $f$ be a $\C[L'/L]$ valued function on
$\Hp$ and let $\rho$ be a representation of $\SL_2(\Z)$ on $\C[L'/L]$. 
For $M \in \SL_2(\Z)$  we define the usual Petersson slash
operator as
\begin{equation}\label{eq:defPetWeil}
\left(f \mid_{\kappa, \rho} M \right)(\tau) 
= \phi(M,\tau)^{-\kappa} \rho(M)^{-1} f(M\tau),
\end{equation}
where $M=\left(\begin{smallmatrix} a & b \\ c & d   \end{smallmatrix}\right)$ acts as usual on $\Hp$, with $M\tau = \frac{a\tau+b}{c\tau +d}$ and $\phi(M,\tau) = c\tau + d$.
\begin{definition}
Let $\kappa \in \Z$. A function $f: \Hp \rightarrow \C[L'/L]$
is called a weakly holomorphic (vector valued) modular form of weight $\kappa$ transforming under $\rho$, if it satisfies
\begin{enumerate}
\item $f\mid_{\kappa, \rho} M = f$ for all $M \in \SL_2(\Z)$, 
\item $f$ is holomorphic on $\Hp$,
\item $f$ is meromorphic at the cusp $i\infty$.
\end{enumerate}
The space of such forms is denoted  $\Mweak_{\kappa, \rho}$.
The holomorphic modular forms, i.e.\ those holomorphic at the cusp, are denoted $\mathcal{M}_{\kappa, \rho}$.
\end{definition}
If $f\in \Mweak_{\kappa,\rho}$, by invariance under the $\mid_{\kappa,\rho}$-action of $T \in \SL_2(\Z)$,
 it has a Fourier expansion of the form
\[
f(\tau) = \sum_{\gamma \in L'/L} \sum_{n\in \Q}
c(n,\gamma) e(n\tau) \ebase_\gamma. 
\]   
Note that for $\rho = \rho_L$, the coefficient $c(n,\gamma)$ vanishes unless $n\equiv \Qf{\gamma} \bmod{1}$.
For $\rho_L^*$, the coefficient $c(n,\gamma)=0$ unless $n\equiv -\Qf{\gamma}\bmod{1}$.

In the present paper, weakly holomorphic modular forms of weight $\kappa = 1 - m$ are used as inputs for the Borcherds lift. 
Note that if $f \in \Mweak_{1-m, \rho_L}$, the Fourier coefficients of $f$ satisfy $c(n,\gamma) = c(n, -\gamma)$ for
$n \in \Z$ and $\gamma \in L'/L$, $\gamma \not \equiv 0 \mod L$ . 

The space $\mathcal{M}_{1+m, \rho_L^*}$ of holomorphic modular forms transforming under the dual representation also plays an important role in this paper, 
as it is the orthogonal complement of $\Mweak_{1-m, \rho_L}$ under the residue-pairing, see section \ref{sec:heeg2}. 

\section{Heegner divisors}\label{sec:heeg1}
The zeros and poles of the Borcherds lift lie along certain divisors related to lattice vectors of negative norm, 
which we define in this section.

Let $\lambda \in L'$ be a lattice vector of negative  norm, with $\hlf{\lambda}{\lambda} = \Qf{\lambda} < 0$.
Then, we associate to $\lambda$ a \emph{prime Heegner divisor} on $\HU$.
First however, we briefly recall the definition of Heegner divisors on $\HO$.
\paragraph{Heegner divisors on $\HO$}
For this, compare \cite{Br02} chapter 3 and chapter 5, also \cite{Bo98}, section 13. The orthogonal complement $\lambda^\perp$ with respect to $\blfempty$ is 
a subspace of $\VQR$, $\blfempty$ of codimension $1$ and signature ${(2,2m-1)}$. The set of subspaces $\VQ$ 
contained in $\GrO$ with $v\in\lambda^\perp$
is a sub-Grassmannian of codimension $1$, which we also denote by $\lambda^\perp$. 
Under the map from $\GrO$ to $\coneO$, $\lambda^\perp$ bijects to the set $\{ [Z_L] \in \coneO\,;\, \blf{\lambda}{Z_L} = 0\}$. 

Thus, we can identify $\lambda^\perp$ with a subset of the tube domain. Also denoted by $\lambda^\perp$, it is defined as follows:
Write $\lambda$ in the form $a e_2 + b e_1 + \lambda_K$, with $\lambda_K \in K\otimes_\Z\Q$. Then 
\[
\lambda^\perp \vcentcolon = \{ Z \in \HO\,;\, b  -\Qf{Z}a + \blf{\lambda_K}{Z} = 0 \}.
\] 
Given $\beta \in L'/L$, $n \in \Z$, with $n<0$, a Heegner divisor of index $(n,\beta)$ is a  $\GammaO{L}$-invariant divisor on $\HO$ defined by the (locally finite) sum
\[
\HeegO (n,\beta) = \sum_{\substack{ \lambda \in \beta + L \\ \QfNop(\lambda) = n}} \lambda^\perp.
\]   
Its support is a locally finite union of sub-Grass\-mann\-ians,
\[
\bigcup_{\substack{ \lambda \in \beta + L \\ \QfNop(\lambda) = n}} \lambda^\perp.
\]

\paragraph{Heegner divisors on $\HU$}
The complement of $\lambda$ in the hermitian space $\VFC, \hlfempty$ is a subspace of codimension $1$. 
The complement of $\lambda$ in the Grassmannian $\GrU$ is a closed analytic subset of codimension $1$,  which we denote as follows
\[
\begin{aligned}
\HeegU(\lambda)& \vcentcolon = \{ v \in \GrU\,;\, \hlf{\lambda}{v} = 0 \} \subset \GrU \\
& = \{ [z] \in \coneU\,;\, \hlf{\lambda}{z} = 0\} \subset\PFR.
\end{aligned}
\]
Considering representatives for $[z] \in \coneU$ of the form $z(\tau,\sigma)$, we associate to $\HeegU(\lambda)$
 a closed analytic subset of the Siegel domain, also denoted $\HeegU(\lambda)$,
\[
\begin{aligned}
\HeegU(\lambda) & \vcentcolon = \{ (\tau, \sigma) \in \HU\,;\, \hlf{\lambda}{z(\tau,\sigma)} = 0 \}.
\end{aligned}
\] 
We call a set of this form a \emph{prime Heegner divisor} on $\HU$.

Now, given $\beta \in L'/L$ and $n \in \Z$, $n<0$, we define a \emph{Heegner divisor of index $(n, \beta)$} as the (locally finite) sum 
\[
\HeegU(n,\beta) = \sum_{\substack{\lambda \in \beta + L \\ \hlf{\lambda}{\lambda} = n}} \HeegU(\lambda).
\]
Since $L$ has multiplication by $\OF$, clearly, $\HeegU(\lambda) = \HeegU(u\lambda)$ for all $u \in \OF^\times$, while $\beta = u\beta$ only if either $\beta \equiv 0 \bmod{L}$ or $u=1$.
 As a consequence, for $\beta\not\equiv 0 \bmod{L}$,
\begin{equation}\label{eq:Hulambda}
\HeegU(n,\beta) = \HeegU(n, u\beta),\quad \text{for all}\; u \in \OF^\times.
\end{equation}
In contrast, $\lambda^\perp = (\hat u\lambda)^\perp$ only if $u = \pm 1$. Thus, $\HeegO(n,\beta)$ is equal to
 $\HeegO(n, -\beta)$ but not, in general, to $\HeegO(n, u\beta)$ if $u\neq \pm 1$.

\paragraph{Behavior under the embedding}
\begin{lemma}\label{lemma:Pb_Heeg_lambda}
 Let $\lambda$ be a lattice vector with $\lambda \in L'$, with $\hlf{\lambda}{\lambda} <0$. Then, for the image of $\HeegU(\lambda)$
under $\alpha$, we have
\[
\alpha(\HeegU(\lambda)) = \left(\alpha(\HU) \cap \lambda^\perp\right) \subset \HO.
\]
The intersection is non-empty, since $\alpha$ is injective, and both $\HeegU(\lambda) \neq \emptyset$ and $\lambda^\perp \neq \emptyset$.
\end{lemma}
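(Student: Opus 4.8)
The plan is to reduce the equality of the two divisors to a single pointwise identity relating the hermitian pairing $\hlf{\lambda}{z}$ on $\VFC$ to the bilinear pairing $\blf{\lambda}{Z_L}$ on $\VQC$. Recall that $\HeegU(\lambda)$ consists of those $(\tau,\sigma)\in\HU$ with $\hlf{\lambda}{z(\tau,\sigma)}=0$, whereas $\lambda^\perp\subset\HO$ is cut out by $\blf{\lambda}{Z_L}=0$. Since $\lambda\in L'\subset\VF$ is a rational, hence real, vector, and since $\alpha$ is defined through the explicit choice of oriented basis \eqref{eq:mapztoXLYL}, it suffices to compute $\blf{\lambda}{Z_L}$ in terms of $\hlf{\lambda}{z}$ and to observe that the two vanish simultaneously.

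First I would use $X_L = \hat\psi z$ and $Y_L = -\hat\imath\hat\psi z$, with $\psi = \tfrac{1}{2\hlf{\ell'}{\ell}}$ and the hats denoting the endomorphisms of $\VQR$ induced by complex scalar multiplication on $\VFC$ (Definition \ref{def:notationhat}). Because $\lambda$ is real, the $\C$-bilinear extension splits as $\blf{\lambda}{Z_L} = \blf{\lambda}{X_L} + i\,\blf{\lambda}{Y_L}$, with each summand evaluated by the $\R$-bilinear form on $\VQR$. On $\VQR\cong\VFC$ this form equals $2\Re\hlfempty$, so, using conjugate-linearity of $\hlfempty$ in the second argument,
\[
\blf{\lambda}{X_L} = 2\Re\bigl(\bar\psi\,\hlf{\lambda}{z}\bigr),\qquad
\blf{\lambda}{Y_L} = 2\Re\bigl(i\bar\psi\,\hlf{\lambda}{z}\bigr) = -2\Im\bigl(\bar\psi\,\hlf{\lambda}{z}\bigr).
\]
Combining real and imaginary parts then gives the identity
\[
\blf{\lambda}{Z_L} = 2\,\overline{\bar\psi\,\hlf{\lambda}{z}} = 2\psi\,\overline{\hlf{\lambda}{z}}.
\]

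Since $\psi\neq 0$, this identity shows at once that $\hlf{\lambda}{z}=0$ if and only if $\blf{\lambda}{Z_L}=0$. Consequently a point of $\HU$ lies on $\HeegU(\lambda)$ precisely when its image under $\alpha$ lies in $\lambda^\perp$; as this image automatically lies in $\alpha(\HU)$, we obtain $\alpha(\HeegU(\lambda)) = \alpha(\HU)\cap\lambda^\perp$. For the non-emptiness claim I would note that $\hlf{\lambda}{\lambda}<0$ makes the hermitian complement of $\lambda$ in $\VF$ of signature $(1,m-1)$, so it meets the positive cone and $\HeegU(\lambda)\neq\emptyset$; injectivity of $\alpha$ (established in Section \ref{sec:embed}) then forces the intersection $\alpha(\HU)\cap\lambda^\perp$, and in particular $\lambda^\perp$ itself, to be non-empty.

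I expect the only genuine difficulty to be bookkeeping rather than conceptual: one must carefully distinguish the scalar $i$ acting on $\VFC$ from the scalar $i$ acting on $\VQC$ and from the induced endomorphism $\hat\imath$ on $\VQR$, and must track the single conjugation produced by the conjugate-linearity of $\hlfempty$. Once the identity $\blf{\lambda}{Z_L} = 2\psi\,\overline{\hlf{\lambda}{z}}$ is secured, the equality of the two divisor loci is immediate.
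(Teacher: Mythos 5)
Your proposal is correct and follows essentially the same route as the paper: the paper's proof likewise uses the normalized representatives $X_L=\hat\psi z$, $Y_L=-\hat\imath\hat\psi z$ from \eqref{eq:mapztoXLYL} and splits $\blf{\lambda}{Z_L}$ into real and imaginary parts via $\blfempty = 2\Re\hlfempty$ to conclude $\blf{\lambda}{Z_L}=0 \Leftrightarrow \hlf{\lambda}{z}=0$, which is exactly your identity $\blf{\lambda}{Z_L}=2\psi\,\overline{\hlf{\lambda}{z}}$ (your version even tracks the conjugation of $\psi$ more carefully than the paper's displayed chain). Your signature argument $(1,m-1)$ for $\HeegU(\lambda)\neq\emptyset$ and the use of injectivity of $\alpha$ match the non-emptiness assertion stated with the lemma.
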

\begin{rmk*}
Note that, as a consequence, for all $u \in \OF^\times$ the sub-Grassmannians $(\hat u \lambda)^\perp$ intersect on the image $\alpha(\HU)$ in $\HO$.
\end{rmk*}
\begin{proof}
We consider $\lambda^\perp$ and $\HeegU(\lambda)$ as subsets of the projective cones $\coneO$ and $\coneU$.
Let $[Z_L] \in \alpha(\HU)\cap \coneO$ and assume that $[Z_L] \in \lambda^\perp$, let $[z] \in \coneU$ 
with $\alpha([z]) = [Z_L]$. Then, if $Z_L$ and $z$ are the normalized representatives, by \eqref{eq:mapztoXLYL}, we have
\[
\begin{aligned}
0  = \blf{\lambda}{Z_L} & \Leftrightarrow \blf{\lambda}{ \frac{z}{2 \hlf{\ell'}{\ell}\sphat} 
         + i \frac{ -\hat\imath z}{2 \hlf{\ell'}{\ell}\sphat}  } = 0 \\
 & \Leftrightarrow \Re\frac{ \hlf{\lambda}{z}}{\hlf{\ell'}{\ell}} + 
        i\Im\frac{\hlf{\lambda}{z}}{\hlf{\ell'}{\ell}} \\
 &\Leftrightarrow \hlf{\lambda}{z} = 0.
\end{aligned}
\]        
It follows that $z \in \HeegU(\lambda)$.
\end{proof}
Thus, a Heegner divisor on $\HO$ defines a Heegner divisor on $\HU$. And, since a linear combination of Heegner divisors on $\HO$ can be pulled back to $\HU$, the following assertion holds:
\begin{lemma}\label{lemma:PbHeeg_mbeta}
If $\HeegO(n,\beta)$ is a Heegner divisor of index $(n,\beta)$ on $\HO$, its restriction to $\HU$ is given by a Heegner divisor of the form
$\HeegU(n,\beta)$.
\end{lemma}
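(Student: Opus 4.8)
The plan is to reduce the assertion to Lemma \ref{lemma:Pb_Heeg_lambda} by working term by term in the defining sums of the two divisors. First I would observe that the restriction is well defined at the level of divisors: since $\alpha\colon \HU \hookrightarrow \HO$ is a holomorphic embedding and each $\HeegU(\lambda)$ is a proper (codimension one) analytic subset of $\HU$, the image $\alpha(\HU)$ is contained in no $\lambda^\perp$. Hence each prime divisor $\lambda^\perp$ restricts to a divisor on $\HU$, and because the sum defining $\HeegO(n,\beta)$ is locally finite, its restriction may be computed summand by summand.

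Next I would match the two index sets. The summation in $\HeegO(n,\beta)$ runs over $\lambda \in \beta + L$ with $\QfNop(\lambda) = n$, while that in $\HeegU(n,\beta)$ runs over $\lambda \in \beta + L$ with $\hlf{\lambda}{\lambda} = n$. Since $\QfNop(\lambda) = \hlf{\lambda}{\lambda}$ for every $\lambda$, these index sets coincide. I would then invoke Lemma \ref{lemma:Pb_Heeg_lambda}, which identifies each restricted prime divisor as $\alpha(\HU) \cap \lambda^\perp = \alpha(\HeegU(\lambda))$. Summing over the common index set, the restriction of $\HeegO(n,\beta)$ to $\alpha(\HU)$ is
\[
\sum_{\substack{\lambda \in \beta + L \\ \QfNop(\lambda) = n}} \bigl(\alpha(\HU) \cap \lambda^\perp\bigr)
= \sum_{\substack{\lambda \in \beta + L \\ \hlf{\lambda}{\lambda} = n}} \alpha(\HeegU(\lambda))
= \alpha\bigl(\HeegU(n,\beta)\bigr),
\]
which is exactly the claim.

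The one point requiring care—the step I expect to be the main, albeit minor, obstacle—is that this should be an equality of divisors and not merely of supports, i.e.\ that the multiplicities agree. This follows from the computation carried out in the proof of Lemma \ref{lemma:Pb_Heeg_lambda}: there the linear form $\blf{\lambda}{Z_L}$ defining $\lambda^\perp$ pulls back, up to the nonvanishing scalar $\hlf{\ell'}{\ell}^{-1}$, to the form $\hlf{\lambda}{z}$ defining $\HeegU(\lambda)$. Thus $\alpha(\HU)$ meets $\lambda^\perp$ with multiplicity one, and each prime divisor on $\HO$ restricts to a prime divisor on $\HU$. I would finally remark that the coincidences $\HeegU(\lambda) = \HeegU(u\lambda)$ for $u \in \OF^\times$ produce no discrepancy: both divisors are indexed by the identical set $\{\lambda \in \beta + L;\ \hlf{\lambda}{\lambda} = n\}$, so the same multiplicities are generated on each side.
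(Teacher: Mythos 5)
Your proposal is correct and follows essentially the same route as the paper, which deduces the lemma from Lemma \ref{lemma:Pb_Heeg_lambda} by pulling back the locally finite sum defining $\HeegO(n,\beta)$ term by term, the index sets agreeing because $\Qf{\lambda} = \hlf{\lambda}{\lambda}$. Your additional checks --- that $\alpha(\HU)$ lies in no $\lambda^\perp$ and that the defining linear form of $\lambda^\perp$ pulls back, up to the nonzero scalar $\hlf{\ell'}{\ell}^{-1}$, to that of $\HeegU(\lambda)$, so multiplicities match --- are points the paper leaves implicit, and they are carried out correctly.
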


\section{Weyl chambers}\label{sec:weylch}
Weyl chambers are connected subsets of the symmetric domain. 
In Borcherds' theory, their definition relates to negative norm vectors in the Lorentzian lattice $K'$. 

\paragraph{The lattice $L_0$ and the projection $p$}

Consider a lattice $L$ with a primitive isotropic vector $\ell \in L$. Then, there is a unique positive integer $\widthell$,
 such that $\tr_{\F/\Q} \hlf{L}{\ell} = N_\ell \Z$. Now, a $\Z$-submodule of the dual lattice  $L'$ is defined as follows:
\[
L_0' \vcentcolon = \left\{ \lambda \in L'\,;\, \tr_{\F/\Q}\hlf{\lambda}{\ell} \equiv 0 \mod \widthell \right\}.
\]
Note that $L_0'$, is, in general, not a hermitian lattice in the sense defined in section \ref{sec:unitary}. It does however, have as multiplier ideal an order $\mathcal{O}$ in $\F$, with $\OF \supset \mathcal{O} \supset \widthell \OF$. 

Recall the definition of the Lorentzian $\Z$-lattice $K = L \cap e_1^\perp \cap e_2^\perp$, where $e_1 = \ell$ and $e_2$ is given by \eqref{eq:basis_oddevenD}.  
We have 
$\left\lvert L'/L \right\rvert =  \widthell^2 \cdot\left\lvert K'/K \right\rvert$.

For the following, see \cite{Br02}, chapter 2.1, particularly p.\ 41.
A projection $p$ from  $L_0'$ to $K'$ with the property that $p(L) = K$ can be defined as follows.
Let $f$ be a vector in $L$ with $\tr_{\F/\Q}\hlf{\ell}{f} = \widthell$ and $\lambda \in L'$.
 Denote by $f_K = p_K(f)$ and $\lambda_K = p_K(\lambda)$ the projections to the Lorentzian space $K\otimes_\Z\Q$ from \eqref{eq:def_pK}. Then, the projection
\[
p(\lambda) = \lambda_K - \frac{\blf{\lambda}{\ell}}{\widthell} f_K
\]
takes $L$ to $K$ and induces a surjective map from $L_0'/L$ to $K'/K$.
\begin{rmk}\label{rmk:beta_from_k}
Given $\kappa \in K'$ and a class $\beta + L \in L_0'/L$, with $p(\beta) = \kappa + K$, a system of representatives for $\beta$ is given by $\beta = \kappa - \blf{\kappa}{f}\ell /\widthell + b\ell/ \widthell$, where $b$ runs modulo $\widthell$, cf.\ \cite{Br02}, p.\ 45.
\end{rmk} 

\paragraph{Weyl chambers for $\HO$ and $\HU$}
Denote by $\GrK$ the Grassmannian of positive definite one-dimensional subspaces of the Lorentzian space $K\otimes_\Z\R$.
 This Grassmannian can be realized 
as a hyperboloid model, see \cite{Br02}, chapter 3.1. 
Heegner divisors in $\GrK$, as before for $\GrO$, are defined as locally finite unions of sub-Grassmannians with \emph{real} codimension one. Thus, for a negative norm vector $\kappa \in K'$, the complement $\kappa^\perp$ defines a prime Heegner divisor of $\GrK$.
Let $\beta\in L_0'/L$ and $n \in \Z$. Then, $p(\beta)$ is an element of $K'/K$, and the Heegner divisor of index $(n, p(\beta)$ is defined as the locally finite sum 
\[
\HeegO(n, p(\beta)) = \sum_{\substack{\kappa \in p(\beta) +  K \\ \Qf{\kappa} = n}}  \kappa^\perp, \quad\text{supported on}\quad\bigcup_{\substack{\kappa \in p(\beta) +  K \\ \Qf{\kappa} = n}} \kappa^\perp.
\]
Now, $\GrK - \HeegO(n,p(\beta))$ is disjoint. Its connected components are called \emph{Weyl chambers of index $(n, p(\beta))$} in $\GrK$. More generally, any non-empty finite intersection of Weyl chambers of $\GrK$ is also called a Weyl chamber.

The Weyl chambers, which appear in Borcherds theorem 13.3 from \cite{Bo98}, are connected subsets of $\HO$. 
They correspond directly to the Weyl chambers in $\GrK$, since the set of all imaginary parts of $Z=X + iY\in\HO$ is given by one of the two components of the hyperbolic cone $\{ v \in K\otimes_\R \C, v^2 > 0 \}$ and 
can be identified with the hyperboloid model of $\GrK$, via $Y \mapsto Y/\abs{Y}$, cf.\ \cite{Br02}, p.\ 78ff, for details.

Consequently, we define Weyl chambers of $\HU$ through the embedding $\alpha$ as follows. We write $Y(\tau,\sigma)$ for the imaginary part of $Z(\tau,\sigma)$ from \eqref{eq:Zoftausigma}.  
\begin{definition} Denote by $\mathcal{V}$ the set of Weyl chambers in $\GrK$, and let
 $\alpha_K$ be the map $\HU \rightarrow \GrK$ given by $(\tau, \sigma) \mapsto \R Y(\tau, \sigma)$. 
 A \emph{Weyl chamber $W$ of $\HU$} is a connected subset with $\alpha_K(W) = V \cap \alpha_K(\HU)$ for some $V \in \mathcal{V}$.
 If $V$ is of index $(n, p(\beta))$, with $n\in\Z$, $\beta \in L_0'/L$ we consider $W$ as being of this index, as well.
   
Also, to facilitate notation, we denote the subset of $\coneU$ corresponding to $W \subset \HU$ by by $W$, too.
\end{definition} 
The Weyl chambers of $\HU$ can also be described explicitly through inequalities. 
As usual, let $z = \ell' - \tau\hlf{\ell'}{\ell}\delta \ell + \sigma$, and set $Y(z) = p_K(\alpha(z))$, where $p_K$ is the projection from \eqref{eq:def_pK}. Note that $Y(z) = Y(\tau,\sigma)$. 
The Weyl chambers in $\GrK$ of index $(n, p(\beta))$,  for $n\in\Z$, $\beta \in L_0'/L$, consist precisely of the points not lying on any of the  divisors $\kappa^\perp$ for $\kappa \in p(\beta) + K$ with $\Qf{\kappa} = n$. Hence, $z$ is contained in one of the Weyl chambers with index $(n, p(\beta))$ if and only if $\blf{Y(z)}{\kappa} \neq 0$ for all $\kappa \in p(\beta) + K$, $\Qf{\kappa} =n$.

Since for $Y_L = \alpha(z)$, we have $\blf{Y_L}{\kappa} = \blf{Y}{\kappa}$, it follows that every Weyl chamber of index $(n,p(\beta))$ is defined through a system of inequalities of the form
\[
  s_\lambda \cdot\Im\frac{\hlf{z(\tau,\sigma)}{\kappa}}{\hlf{\ell'}{\ell}}>0, \quad\text{with}\quad s_\kappa \in \{ \pm 1 \},\quad\text{for $\kappa \in p(\beta) + K$, $\Qf{\kappa} = n$}. 
\]  

\paragraph{Weyl chamber condition} 
Conversely, let $W_{n,\beta}$ a Weyl chamber of index $(n, p(\beta))$. Then, for every $\kappa \in p(\beta) + K$, with $\Qf{\kappa} = n$, the sign of $\blf{Y(z)}{\kappa} = \blf{Y_L(z)}{\kappa}$ is constant on $W_{n,\beta}$. 

We introduce the following notational convention: Given a Weyl chamber $W$ in $\HU$ and a vector $\kappa \in K'$, write
\begin{equation}\label{eq:defWeylCond}
\Wpos{\kappa}{W}>0 \quad\text{if}\quad \Im\frac{\hlf{z}{\kappa}}{\hlf{\ell'}{\ell}} >0 \quad\text{for every $z=z(\tau,\sigma)\in W$}.
\end{equation}
In particular, this convention will be used in connection with Weyl chambers attached to the Fourier expansion of modular forms.

\paragraph{Weyl chambers attached to modular forms}
If $f$ is a weakly holomorphic modular form in $\Mweak_{1-m, \rho_L}$, with Fourier coefficients 
$c(n,\beta)$ in its principal part, the connected components of 
\[
\mathcal{G}_K  -  \bigcup\nolimits_{\beta \in L_0'/L} 
\bigcup\nolimits_{\substack{n\in\Z + \QfNop(\beta) \\ c(n,\beta) \neq 0 \\ n<0 }} H(n, p(\beta)) 
\]
are called the \emph{Weyl chambers} of $\mathcal{G}_K$ with respect to $f$, cf.\ \cite{Br02}, p.\ 88. Each such Weyl chamber can be written as an intersection of Weyl chambers of index $(n, p(\beta))$, where $\beta$ runs over $L_0'/L$, and $n$ over $\Z + \Qf{\beta}$ , with $c(n,\beta) \neq 0$.

Thus, in $\HU$, the Weyl chambers attached to $f$ are defined as the intersections of the form
\begin{equation}\label{eq:WeylChm_comp}
W = \bigcap\nolimits_{\beta \in L_0'/L} 
\bigcap\nolimits_{\substack{n\in\Z + \QfNop(\beta) \\ c(n,\beta) \neq 0 \\ n<0 }} W_{n,\beta},
\end{equation}
where $W_{n,\beta}$ denotes a Weyl chamber of index $(n,p(\beta))$ with $W\subset W_{n,\beta}$.

\section{The main theorem}\label{sec:mainthm}

\begin{theorem} \label{thm:BPmain}
Let $L$ be an even hermitian lattice of signature $(1,m)$, with $m\geq
1$, and $\ell \in L$ a primitive isotropic vector. Let $\ell' \in L'$ with $\hlf{\ell}{\ell'}
\neq 0$. Further assume that $\ell'$ is isotropic, as well.

Given a weakly holomorphic modular form $f\in \Mweak_{1-m, \rho_L}$
with Fourier coefficients $c(n, \beta)$ satisfying $c(n, \beta) \in
\Z$ for $n< 0$, there is a meromorphic
function $\Xi_f: \HU
\rightarrow \bar{\C}$ with the following properties: 
\begin{enumerate}
\item\label{item:MthmMf} $\Xi_f$ is an automorphic form of weight
$c(0,0)/2$ for $\GammaU{L}$, with some multiplier system $\chi$ of finite order. 
\item\label{item:MthmHd} The zeros and poles of $\Xi_f$ lie on Heegner
  divisors. The divisor of $\Xi_f$ on $\HU$ is given by
\[
\Div\bigl( \Xi_f \bigr) = \frac{1}{2} \sum_{\beta \in
  L'/L}\sum_{\substack{n \in \Z + \QfNop(\beta) \\ n<0}}c(n, \beta) 
\, \HeegU(n,\beta). 
\]
The multiplicities of the $\HeegU(n, \beta)$ are $2$, if 
$2\beta = 0$ in $L'/L$, and $1$ otherwise. Note that 
$c(n, \beta) = c(n, -\beta)$ and also that
$\HeegU(n,  \beta) = \HeegU(n, u\beta)$ for $u\in \OF^\times, \beta\neq 0$.
\item\label{item:MthmBp} For the cusp corresponding to
$\ell$ and for each Weyl chamber $W$, $\Xi_f(z)$ has an infinite product
expansion of the form
\[
 \begin{aligned}
 \Xi_f(z)  =  C\,&
 e\left(
\frac{\hlfa{z, \rho_f(W)}}{\hlf{\ell'}{\ell}}
\right)  \\ & \times 
 \prod_{\substack{\lambda \in K' \\
  \Wpos{\lambda}{W} > 0 }}
\prod_{\substack{\beta\in L_0'/L \\ p(\beta) = \lambda + K}}\!
\left[ 
1 - e\left(
\frac{\hlf{z}{\lambda}}{\hlf{\ell'}{\ell}} + 2\Re\left[ 
\frac{\bar{\zeta}\hlf{\beta}{\ell'}}{\bar{\delta}\hlf{\ell'} { \ell } }
 \right]
\right)
\right]^{c(\QfNop({\lambda}), \beta)}\\
\end{aligned}
\] 
where $z = z(\tau,\sigma) = \ell' - \delta\hlf{\ell'}{\ell}\tau \ell + \sigma$, the constant $C$ has absolute value $1$ and   
$\rho_f(W) \in K\otimes_\Z\Q$ is the Weyl vector attached to $W$.
The product converges normally for any $z$ lying in the complement of the
set of poles of $\Xi_f$, and satisfying $\hlf{z}{z}  >
4\lvert \hlf{\ell'}{\ell}\rvert^2 \abs{ n_0 }$, where $n_0
= min\lbrace n \in\Q;\, c(n, \beta) \neq  0\rbrace$. 
\item \label{item:MthmMl} The lifting is multiplicative: $\Xi(z; f + g) =
  \Xi(z; f) \cdot \Xi(z;g)$. 
\end{enumerate}
\end{theorem}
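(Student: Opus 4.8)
The plan is to deduce the theorem by \emph{pulling back} Borcherds' orthogonal construction along the embedding $\alpha$ of Section~\ref{sec:embed}. Viewed as a quadratic $\Z$-lattice, $L$ has signature $(2,2m)$, and the hypotheses on $\ell,\ell'$ provide precisely the data ($e_1 = \ell$ primitive isotropic, $e_2\in L'$ with $\blf{e_1}{e_2}=1$, Lorentzian $K = L\cap e_1^\perp\cap e_2^\perp$) required to apply Theorem~13.3 of \cite{Bo98}; moreover $f\in\Mweak_{1-m,\rho_L}$ is an admissible input, since $1-m = 1 - (2m)/2$. That theorem thus produces a meromorphic automorphic form $\Psi_f$ of weight $c(0,0)/2$ for $\GammaO{L}$, with a finite-order multiplier system, on the tube domain $\HO$, whose divisor is $\tfrac12\sum_\beta\sum_{n<0}c(n,\beta)\HeegO(n,\beta)$ and which has an infinite product expansion near the cusp $e_1=\ell$. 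I set $\Xi_f\vcentcolon=\alpha^*\Psi_f$. By Lemma~\ref{lemma:pbmf} this is a meromorphic automorphic form on $\HU$ of the same weight $c(0,0)/2$, and since $\GammaU{L}\subset\GammaO{L}$ it is automorphic for $\GammaU{L}$; this gives property~\ref{item:MthmMf}. Property~\ref{item:MthmMl} is then immediate, because $f\mapsto\Psi_f$ is multiplicative and $\alpha^*$ is a homomorphism of rings of meromorphic functions, so $\Xi_{f+g} = \alpha^*\Psi_{f+g} = \alpha^*\bigl(\Psi_f\Psi_g\bigr) = \Xi_f\,\Xi_g$.

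For property~\ref{item:MthmHd} I would identify $\Div(\Xi_f)$ with the pullback of $\Div(\Psi_f)$. Since each $\HeegU(\lambda)$ has codimension one, $\alpha(\HU)$ is contained in no $\lambda^\perp$, so the pullback is the honest intersection divisor, and Lemmas~\ref{lemma:Pb_Heeg_lambda} and~\ref{lemma:PbHeeg_mbeta} identify the restriction of $\HeegO(n,\beta)$ with $\HeegU(n,\beta)$. The one point demanding care is the multiplicity. On $\HO$ the prime divisor $\lambda^\perp$ equals $(-\lambda)^\perp$ but differs from $(\hat u\lambda)^\perp$ for $u\neq\pm1$; yet, by the remark following Lemma~\ref{lemma:Pb_Heeg_lambda}, all the $(\hat u\lambda)^\perp$ with $u\in\OF^\times$ meet $\alpha(\HU)$ along the single prime divisor $\HeegU(\lambda)$. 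Bookkeeping the contributions of $\beta$ and $-\beta$ then produces the asserted multiplicity $2$ exactly when $2\beta=0$, consistently with the factor $\tfrac12$ and the relations $c(n,\beta)=c(n,-\beta)$ and $\HeegU(n,\beta)=\HeegU(n,u\beta)$.

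The core of the argument is the product expansion, property~\ref{item:MthmBp}. Here I would start from Borcherds' expansion, which in the present normalization reads
\[
\Psi_f(Z) = C\,e\bigl(\blf{Z}{\rho_f(W)}\bigr)\prod_{\substack{\lambda\in K' \\ (\lambda,W)>0}}\ \prod_{\substack{\beta\in L_0'/L \\ p(\beta)=\lambda+K}}\bigl(1 - e\bigl(\blf{\lambda}{Z}+\blf{\beta}{e_2}\bigr)\bigr)^{c(\Qf{\lambda},\beta)},
\]
and substitute the explicit image $Z = Z(\tau,\sigma)$ of \eqref{eq:Zoftausigma}. The computation already performed in the proof of Lemma~\ref{lemma:Pb_Heeg_lambda}, together with \eqref{eq:mapztoXLYL}, converts the orthogonal pairing into the hermitian one, turning $\blf{\lambda}{Z}$ into $\hlf{z}{\lambda}/\hlf{\ell'}{\ell}$ and hence both the leading exponential and each factor into the stated shape. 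The outer summation over $\lambda\in K'$ with the positivity condition $(\lambda,W)>0$ is, by the very definition of Weyl chambers of $\HU$ in Section~\ref{sec:weylch}, the condition $\Wpos{\lambda}{W}>0$; and the inner summation over lifts of $\lambda$ is the product over $\beta\in L_0'/L$ with $p(\beta)=\lambda+K$. Finally, the phase $\blf{\beta}{e_2}$, evaluated on the representative $\beta = \kappa - \blf{\kappa}{f}\ell/\widthell + b\ell/\widthell$ of Remark~\ref{rmk:beta_from_k} and using $e_2 = \tfrac{\zeta}{\delta\hlf{\ell'}{\ell}}\ell'$ from \eqref{eq:basis_oddevenD}, reduces to the term $2\Re\bigl[\bar\zeta\hlf{\beta}{\ell'}/(\bar\delta\hlf{\ell'}{\ell})\bigr]$ appearing in the statement.

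I expect this last phase computation to be the main obstacle: it requires matching Borcherds' dual isotropic vector with $e_2$ through the four-dimensional basis $e_1,\dots,e_4$, keeping all conjugations and the parity-dependent choice of $\zeta$ in \eqref{eq:def_zeta} consistent, and checking that the character absorbed into $C$ keeps absolute value one. The convergence claim is then inherited from Borcherds: since $\blf{Y_L}{Y_L} = \blf{Y}{Y} = \hlf{z}{z}/\bigl(4\abs{\hlf{\ell'}{\ell}}^2\bigr)$, his domain of normal convergence, which in this normalization is $\blf{Y}{Y}>\abs{n_0}$, translates into $\hlf{z}{z} > 4\abs{\hlf{\ell'}{\ell}}^2\abs{n_0}$ with $n_0 = \min\{n;\,c(n,\beta)\neq 0\}$.
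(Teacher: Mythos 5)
Your proposal is correct and follows essentially the same route as the paper: define $\Xi_f = \alpha^*\Psi_L$, obtain automorphy, the finite-order multiplier, multiplicativity and the divisor by pullback (via Lemma \ref{lemma:pbmf} and Lemmas \ref{lemma:Pb_Heeg_lambda}, \ref{lemma:PbHeeg_mbeta}), and translate the orthogonal data into hermitian form using \eqref{eq:Zoftausigma} and \eqref{eq:basis_oddevenD}. The only cosmetic difference is in part 3, where the paper pulls back the Fourier expansion of the additive lift $\Phi_L$ and recovers the product through the relation \eqref{eq:logPsiL} between $\log\lvert\Psi_L\rvert$ and $\Phi_L$ (which also fixes the constant and yields the convergence bound from Bruinier's criterion $\Qf{Y}>\abs{n_0}$), whereas you substitute $Z(\tau,\sigma)$ directly into Borcherds' final product expansion --- equivalent bookkeeping, and your phase computation $\blf{\beta}{e_2} = 2\Re\bigl[\bar\zeta\hlf{\beta}{\ell'}/(\bar\delta\hlf{\ell}{\ell'})\bigr]$ matches the one carried out in the paper's proof.
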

\begin{rmk}
The Weyl vector $\rho_f(W)$, attached to $W$ and $f$ can often be computed explicitly, see \cite{Bo98}, theorem 10.4. 
Also, if the Weyl chamber is decomposed as in \eqref{eq:WeylChm_comp}, then by a formula of Bruinier, cf.\ \cite{Br02}, p.\ 88,
  $\rho_f(W)$ is a linear combination of vectors $\rho_{n,\beta}$ attached to the $W_{n,\beta}$. 
\end{rmk}
\begin{corollary}\label{cor:MThUnimod}
We use the notation of the theorem. Suppose $L$ is the direct sum of a
hyperbolic plane $H \simeq \OF\oplus\DF^{-1}$ and a definite part $D$, with
$\hlf{D}{H} = 0$. Then, for a cusp corresponding to $\ell\in H$ and every
Weyl chamber $W$, the lift $\Xi_f(z)$ has an absolutely convergent Borcherds product expansion of the form
\begin{equation}\label{eq:maincor1}
\Xi_f(z) = C\,e\left( \frac{\hlfa{z , \rho_f(W)}}{\hlf{\ell'}{\ell}}\right)
\prod_{\substack{\lambda \in K' \\ \Wpos{\lambda}{W}>0}}
\left[1 - e\left( \frac{\hlf{z}{\lambda}}{\hlf{\ell'}{\ell}}\right)
\right]^{c(\QfNop(\lambda), \lambda)}.
\end{equation}
In particular, if $\hlf{\ell}{\ell'} = -\delta^{-1}$, the
Borcherds product of $\Xi_f(z)$ can be written as
\begin{equation}\label{eq:maincor2}
\Xi_f(z) = C\,e\left( \delta \hlf{z}{\rho_f(W)}\right)
\prod_{\substack{\lambda \in \Z\zeta\ell \oplus \Z\ell' \oplus D' \\
 \blf{\lambda}{W}>0}}
\Bigl( 1 - e\bigl( \delta {\hlf{z}{\lambda}} \bigr)
\Bigr)^{c(\QfNop(\lambda),\lambda)},
\end{equation}
where, as usual, $\zeta = \frac12 \delta$ if $D_\F$ is even and $\zeta=\frac12
(1 + \delta)$, if $D_\F$ is odd.
\end{corollary}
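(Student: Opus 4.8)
The plan is to obtain Corollary \ref{cor:MThUnimod} as a direct specialization of Theorem \ref{thm:BPmain} to the split lattice $L = H\oplus D$, with $H\simeq\OF\oplus\DF^{-1}$ the hyperbolic plane of Example \ref{ex:hyp_plane}. The hypotheses of the theorem are met: $\ell\in H$ is primitive isotropic, $\ell'$ is its isotropic partner with $\hlf{\ell}{\ell'}\neq 0$. Parts \ref{item:MthmMf}, \ref{item:MthmHd} and \ref{item:MthmMl} then transfer verbatim, so the whole content of the corollary is the simplification of the product in part \ref{item:MthmBp}, from a double product over pairs $(\lambda,\beta)$ to the single product \eqref{eq:maincor1}, followed by the explicit form \eqref{eq:maincor2}.

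First I would compute the local invariants at $\ell$ coming from the splitting. Since for $(x,y)\in H$ one has $\hlf{(x,y)}{\ell}=y$, we get $\tr_{\F/\Q}\hlf{H}{\ell}=\tr_{\F/\Q}(\DF^{-1})=\Z$, while $\hlf{D}{\ell}=0$ by $\hlf{D}{H}=0$; hence $\tr_{\F/\Q}\hlf{L}{\ell}=\Z$, i.e. $\widthell=1$. Consequently $L_0'=L'$, and from $\lvert L'/L\rvert=\widthell^2\,\lvert K'/K\rvert=\lvert K'/K\rvert$ together with the surjectivity of $p$, the projection $p\colon L'/L\to K'/K$ is an isomorphism. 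Next I would read off $K$ and $K'$ from the basis \eqref{eq:basis_oddevenD}: the vectors $e_3=-\zeta\ell$ and $e_4$ form the hyperbolic pair lying over the $H$-factor, so $K=\Z\zeta\ell\oplus\Z\ell'\oplus D$, and because this hyperbolic pair is unimodular, $K'=\Z\zeta\ell\oplus\Z\ell'\oplus D'$; in particular $K'\subseteq L'=H\oplus D'$.

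With these identifications the double product collapses. For $\lambda\in K'$ one has $\blf{\lambda}{\ell}=0$, so $p(\lambda)=\lambda+K$ and $\lambda$ itself represents the unique class $\beta\in L'/L$ with $p(\beta)=\lambda+K$; the inner product over $\beta$ in part \ref{item:MthmBp} thus reduces to a single factor with exponent $c(\QfNop(\lambda),\lambda)$. It remains to check that the phase $2\Re\bigl[\bar\zeta\hlf{\lambda}{\ell'}/(\bar\delta\hlf{\ell'}{\ell})\bigr]$ is integral for these $\lambda$. Writing $\lambda=a\zeta\ell+b\ell'+\lambda_D$ and using that $\ell'$ is isotropic and $\hlf{D'}{\ell'}=0$, only the $\zeta\ell$-component survives and the phase reduces to a multiple of $\Re\bigl[\overline{\hlf{\ell'}{\ell}}/(\bar\delta\hlf{\ell'}{\ell})\bigr]$; since $\bar\delta=-\delta$ is purely imaginary, this vanishes precisely when $\hlf{\ell'}{\ell}$ is real or purely imaginary, which is exactly the chosen normalization of the cusp (and explicitly the case $\hlf{\ell'}{\ell}=\delta^{-1}$ of \eqref{eq:maincor2}). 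This yields \eqref{eq:maincor1}; substituting $1/\hlf{\ell'}{\ell}=\delta$ and the explicit $K'=\Z\zeta\ell\oplus\Z\ell'\oplus D'$ then gives \eqref{eq:maincor2}. Absolute (normal) convergence is inherited from the corresponding statement in Theorem \ref{thm:BPmain}.

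The step I expect to be the main obstacle is precisely this phase-and-projection bookkeeping: one must keep the representative $\lambda$ of a class in $L'/L$ consistent between the exponent index $c(\QfNop(\lambda),\lambda)$ and the phase, and pin down that the residual gluing between the $\ell$-direction and $K$ is trivial — which is ultimately what $\widthell=1$ and the unimodularity of $H$ express, via Remark \ref{rmk:beta_from_k}. Once $\widthell=1$ and $K'=\Z\zeta\ell\oplus\Z\ell'\oplus D'$ are in place, the remaining manipulations are routine substitutions.
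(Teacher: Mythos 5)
Your overall strategy coincides with the paper's: specialize theorem \ref{thm:BPmain}, show $\widthell=1$ (your trace computation $\tr_{\F/\Q}\hlf{H}{\ell}=\tr_{\F/\Q}(\DF^{-1})=\Z$ is a legitimate variant of the paper's argument that $e_2\in L$ with $\blf{e_2}{\ell}=1$), conclude $L_0'=L'$, identify the unique class $\beta$ with $p(\beta)=\lambda+K$ with $\lambda$ itself, kill the extra phase, and only then impose $\hlf{\ell}{\ell'}=-\delta^{-1}$. Up to and including the collapse of the double product your argument is sound.

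The genuine gap is the phase step. The first assertion \eqref{eq:maincor1} is claimed for \emph{every} admissible $\ell'$; there is no ``chosen normalization of the cusp'' among its hypotheses --- only the second display \eqref{eq:maincor2} assumes $\hlf{\ell}{\ell'}=-\delta^{-1}$. So when you conclude that the phase vanishes ``precisely when $\hlf{\ell'}{\ell}$ is real or purely imaginary'' and then appeal to a normalization, you have proved \eqref{eq:maincor1} only in a special case, not as stated. The residual unit $\overline{\hlf{\ell'}{\ell}}/\hlf{\ell'}{\ell}$ you run into is an artifact of the denominator as printed in part 3 of the theorem: in the paper's own proof the phase arises as Borcherds' term $\blf{\beta}{e_2}=2\Re\hlf{\beta}{e_2}$, and since $e_2=\frac{\zeta}{\delta\hlf{\ell'}{\ell}}\ell'$ enters the hermitian form through its \emph{conjugate}, the correct denominator is $\bar\delta\,\hlf{\ell}{\ell'}$, not $\bar\delta\,\hlf{\ell'}{\ell}$ (this is visible in the displayed pullback formulas inside the proof of theorem \ref{thm:BPmain}). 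With that version the dependence on the pairing cancels completely: $\hlf{\beta}{\ell'}=\beta_\ell\hlf{\ell}{\ell'}$, the phase equals $2\Re\bigl(-\delta^{-1}\bar\zeta\beta_\ell\bigr)$, and since $\beta_\ell=-\zeta\lambda_3$ with $\lambda_3\in\Q$, the quantity $\delta^{-1}\abs{\zeta}^2\lambda_3$ is purely imaginary, so the phase is $0$ for all $\ell'$ --- equivalently and most directly, $\blf{\lambda}{e_2}=0$ because $\beta=\lambda\in K'$ and $e_2\perp K$. This is exactly the paper's computation, and your version genuinely fails without it: for $D_\F=-4$, $\ell=(1,0)$, $\ell'=\bigl(0,\delta^{-1}(2+i)\bigr)$ and $\lambda=-\zeta\ell$, the phase as you compute it equals $4/5$ modulo $1$. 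A related minor slip feeding the same conflation: $K'=\Z\zeta\ell\oplus\Z\ell'\oplus D'$ holds only after the normalization, since in general $e_4=\frac{1}{\delta\hlf{\ell'}{\ell}}\ell'$ from \eqref{eq:basis_oddevenD} is a proper multiple of $\ell'$; for \eqref{eq:maincor1} you should work with $K'=\Z e_3\oplus\Z e_4\oplus D'$.
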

\begin{proof}[Proof of the corollary]
By assumption, $\ell' \in L$, as the hyperbolic part of $L$ is unimodular. Now, since $e_2$ is contained in $H\otimes_{\OF}\F$ and $e_2 \in L'$, one has $e_2 \in L$. With $\blf{e_2}{\ell} = 1$, we deduce that $\widthell = 1$. 
Hence, $L_0' = L'$. 

Further, $\left\lvert L'/L \right\rvert =  \left\lvert K'/K \right\rvert$, and for each $\lambda \in K'$ there is modulo $L$ only one representative $\beta$ with $p(\beta) = \lambda + K$. By remark \ref{rmk:beta_from_k}, it is given by 
$\beta = \lambda - \blf{\lambda}{f} \ell$. With $e_2=f$ and since $e_2 \perp K'$, we may identify $\beta$ with $\lambda = \lambda + 0 e_2 + 0 e_1 \in L'$. 
Finally, the $\ell$-component of $\lambda \in L'$ is given by $-\zeta \lambda_3$, where $\lambda_3$ denotes the $e_3$-component of $\lambda$. Since $\lambda_3$ is rational, we have $2\Re\left( -\delta^{-1} \bar\zeta \lambda_\ell\right)  = 0$.  Hence $\tr_{\F/\Q}(-\delta^{-1} \bar\zeta \beta_\ell) = 0 \pmod{\Z}$.

The second part, for $\hlf{\ell}{\ell'} = - \delta^{-1}$, follows with $e_4 = \ell'$ from \eqref{eq:basis_oddevenD}.    
\end{proof}
The proof of theorem \ref{thm:BPmain} is mostly based on results from Borcherds' seminal paper \cite{Bo98}, in which Borcherds uses a regularized theta-lift to construct a lifting from weakly holomorphic vector valued modular forms to automorphic forms on orthogonal groups. If $M$ is an even lattice of signature $(2,b)$, and $f\in \Mweak_{1-b/2, \rho_M}$, the lifting is given by the regularized integral 
\[
\Phi_M(Z,f) = \int^{reg}_{\mathcal{F}}\left\langle f(\tau),\Theta_M(\tau,Z)\right\rangle y^{b/2} \frac{dx\, dy}{y^2},
\]
with a (generalized) Siegel theta series $\Theta_M$. In this case, 
the additive lifting $\Phi_M(Z,f)$ can be used to obtain a multiplicative lifting 
$\Psi_M(Z,f)$. The functions $\Phi_M$ and $\Psi_M$ are related through
\begin{equation}\label{eq:logPsiL}
\log\left\lvert\ \Psi_M(Z,f)\right\rvert
= - \frac{\Phi_M(Z,f)}{4} - \frac12 c(0,0) \left( \log\left\lvert Y_M\right\rvert + \frac12 \Gamma'(1) + \log\sqrt{2\pi}\right),
\end{equation}
where $c(0,0)$ denotes the constant term in in the Fourier expansion of $f$.
Now for  $M=L$ and $b=2m$, we apply the machinery of the pullback under the embedding $\alpha$ developed in the previous sections to Borcherds' results, and  define
\begin{equation}\label{eq:defXi}
\Xi_f(z) \vcentcolon = \alpha^*\Psi_L(Z,f).
\end{equation}
\begin{proof}[Proof of the theorem]
The main reference for the proof is, of course, \cite{Bo98} in particular theorem 13.3 there, in which the properties of the multiplicative lift are formulated. 
%% Caveat (LaTex): Must use same item labels as in theorem!

\noindent\textit{1)} This follows from Borcherds' result by pullback, 
since $\Psi_L$ is meromorphic on $\HO$ and automorphic 
of weight $c(0,0)/2$ for (a subgroup of $\SO^+(L)$ containing) $\GammaO{L}$, cf.\ \cite{Bo98} lemma 13.1.
Then, $\Xi_f$ is a meromorphic function on $\HU$ transforming as a automorphic form of weight $c(0,0)/2$ for 
$\GammaU{L}$, since $\GammaU{L}$ is contained as a subgroup in $\GammaO{L}$.

That the multiplier system  $\chi$ has finite order, similarly follows by pullback. It is a consequence of a theorem by Margulis, see \cite{Br02}, p.\ 87, and \cite{Bo99Cor}. 

\noindent\textit{2)} 
The divisor of $\Psi_L(Z,f)$ is given by (cf.\ \cite{Br02}, theorem 2.22) 
\[
\Div(\Psi_L) = \frac12 \sum_{\beta \in L'/L}\sum_{\substack{n\in\Z + \QfNop(\beta) \\ n<0}} c(n,\beta) \HeegO(n,\beta).
\]
Now, by lemma \ref{lemma:Pb_Heeg_lambda} the restriction of each Heegner divisor occurring in the sum induces a Heegner 
divisor on $\HU$, with $\HeegO(n,\beta) \cap \alpha(\HU) = \HeegU(n,\beta)$. Thus, the divisor of the pullback $\Xi_f$ is given by
\[
\Div(\Xi_f) = \frac12 \sum_{\beta \in L'/L}\sum_{\substack{n\in\Z + \hlf{\beta}{\beta} \\ n<0}} c(n,\beta) \HeegU(n,\beta),
\]  
wherein, for $\beta \not\equiv 0 \mod{L}$, by \eqref{eq:Hulambda}, $\HeegU(n,\beta) = \HeegU(n,u\beta)$ for all $u\in\OF^\times$.

\noindent\textit{4)} This follows directly from the multiplicativity of the Borcherds lift $\Psi_L(Z,f)$.

\noindent\textit{3)} For the product expansion we must examine part of Borcherds' proof in more detail. For the Fourier expansion of the 
regularized theta-lift, Borcherds finds the following expression:
\begin{equation}\label{eq:PhiLafterEval}
\begin{aligned}
\Phi_L(Z,f)  & = 8\pi \blf{Y}{\rho_f(W)} 
+ c(0,0) \Bigl( \log(e_v^2) - \Gamma'(1) - \log(2\pi) \Bigr) \\
& - 2 \sum_{\substack{ \beta \in \Z \widthell \Z \\ \beta \neq 0 }}
 c(\beta \ell/\widthell,  0) 
 \log\left( 1 - e\left( \frac{\beta}{\widthell} \right)\right)
 \\ 
 &- 4 \sum_{\substack{\lambda \in K'  \\ \blf{W}{\lambda}>0 }}
 \sum_{\substack{\beta \in L_0'/L \\ p(\beta) = \lambda }} 
\sum_{k>0} c(\Qf{\lambda}, \beta)\,\cdot \frac{1}{k} 
e\Bigl( k \bigl(\blf{X}{\lambda}  + i  \left\lvert \blf{Y}{\lambda}\right\rvert + \blf{\beta}{e_2}\bigr) \Bigr),
\end{aligned}
\end{equation}
where $e_v$ denotes the projection of $e_1$ to $v= \R X_L \oplus \R Y_L \in \GrO$.
In order to determine the pullback of $\Phi_L$ under $\alpha$, we rewrite this 
expression in terms of $z$. We have
\[
\blf{X}{\lambda} = \Re\frac{\hlf{z}{\lambda}}{ \hlf{\ell'}{\ell}},\quad
\blf{Y}{\lambda} = \Im\frac{\hlf{z}{\lambda}}{ \hlf{\ell'}{\ell}}, \quad
e_v^2 
= \frac{\blf{X_L}{e}^2}{X_L^2}
= \frac{1}{Y^2} = \frac{2 \abs{\hlf{\ell}{\ell'}}^2}{\hlf{z}{z}},
\] 
also, recall  that $e_2 = \frac\zeta\delta \hlf{\ell'}{\ell}^{-1} \ell'$, as given in \eqref{eq:basis_oddevenD}.
Thus, the pullback of the first two lines from \eqref{eq:PhiLafterEval} is given by
\[
\begin{split}
 \;8\pi\cdot \Im\frac{\hlf{z}{\rho_f(W)}}{2\hlf{\ell'}{\ell}} 
+ c(0,0)  \cdot\biggl(- \log
 \frac{ \abs{\hlf{z}{z}} }{ 2|\hlf{\ell'}{\ell}|^2}
 -  \Gamma'(1) - \log(2\pi)\biggl) \\ 
  \;- \;2\sum_{\substack{\beta \in \Z/N\Z \\ \beta \neq 0 }}  
 c(0,\beta\ell/\widthell) \cdot \log\left(1 -  e\left(\frac{\beta}{\widthell}\right) 
  \right). 
\end{split}
\]
Note that $\blf{\rho_f(W)}{Y} = \blf{\rho_f(W)}{Y_L}$ as $\rho_f(W) \in K\otimes_\Z\R$ and $Y = p_K(Y_L)$.
The last line of \eqref{eq:PhiLafterEval} can be rewritten as 
\[
 2\sum_{\substack{\lambda \in K'  \\ \lambda\neq 0 }}
 \sum_{\substack{\beta \in L_0'/L \\ p(\beta) = \lambda }} 
 \sum_{k>0} \frac{ c(\Qf{\lambda}, \beta)}{k} \cdot\,
  e\!\left( k \left( \Re\frac{\hlf{z}{\lambda}}{\hlf{\ell'}{\ell}} + 
   i \left\lvert \Im \frac{\hlf{z}{\lambda}}{\hlf{\ell'}{\ell}}\right\rvert 
+ 2\Re \frac{ \bar\zeta\hlf{\beta}{\ell'} }{\bar\delta \hlf{\ell}{\ell'} }  
 \right)\right).
 \]
As for each $\lambda$ the expression 
$\blf{Y}{\lambda} = \Im\left( \hlf{z}{\lambda} \hlf{\ell'}{\ell}^{-1} \right)$
 is non-zero and has constant sign on $W$, we can restrict to $\lambda$ for which it is positive. 
Also, recall that $c(n,\beta) = c(n, -\beta)$. We obtain 
\begin{gather*}
 -4 \sum_{\substack{\lambda \in K'  \\ \Wpos{W}{\lambda}>0 }}
 \sum_{\substack{\beta \in L_0'/L \\ p(\beta) = \lambda }} 
 c(\Qf{\lambda}, \beta)\,\cdot
\log\left\lvert 1 - e\left( \frac{ \hlf{z}{\lambda}}{\hlf{\ell'}{\ell}}
+ 2\Re \frac{ \bar\zeta\hlf{\beta}{\ell'} }{\bar\delta \hlf{\ell}{\ell'} } 
 \right) \right\rvert.
\end{gather*}
After gathering all contributions, the pullback of $\Phi_L(f)$ takes the form
\[
\begin{split}
(\alpha^* \Phi_L(f))&(z) =  8\pi\cdot \Im\frac{\hlf{z}{\rho_f(W)}}{2\hlf{\ell'}{\ell}} 
+ c(0,0)  \cdot\biggl( - \log
 \frac{ \abs{\hlf{z}{z}} }{ 2|\hlf{\ell'}{\ell}|^2}
 -  \Gamma'(1) - \log(2\pi)\biggl) \\ 
  &- 2\sum_{\substack{\beta \in \Z/N\Z \\ \beta \neq 0 }}  
 c(0,\beta\ell/\widthell) \cdot \log\left(1 -  e\left(\frac{\beta}{\widthell}\right) 
  \right) \\
  & - 4\sum_{\substack{\lambda \in K'  
  \\ \Wpos{\lambda}{W}>0 }} 
 \sum_{\substack{\beta \in L_0'/L \\ p(\beta) = \lambda }}  
 c(\hlf{\lambda}{\lambda},  \beta)\cdot
 \log\left\lvert\, 1 -  
 e\left( \frac{\hlf{z}{\lambda}}{\hlf{\ell'}{\ell}} 
 + 2\Re\left[ \bar{\xi}
 \frac{\hlf{\beta}{\ell'}}{\hlf{\ell}{\ell'}}
\right] \right)\right\rvert.
\end{split}
\]
Now, with \eqref{eq:logPsiL} we can determine the logarithm of $\Xi_f$, as per definition $\Xi_f = \alpha^*\Psi_L$:
\[
\begin{aligned}
\log\lvert \Xi_f(z) \rvert & 
 =  - \frac{(\alpha^*\Phi_L(f))(z)}{4}
 - \frac{c(0,0)}{4}\left( 
 \log\frac{ \abs{\hlf{z}{z}} }{ 2|\hlf{\ell'}{\ell}|^2}
+ \Gamma'(1)  + \log(2\pi) \right).
\end{aligned}
\]
This results in the claimed form  of the product expansion.

The statement on convergence follows from a result of Bruinier, who in \cite{Br02} (theorem 3.22 on p.\ 88f)
gives a precise criterion for the normal convergence of the Borcherds product: The
 product converges normally on the complement of the set of poles if
 $\Qf{Y_L} = \Qf{Y}>\abs{n_0}$, with $n_0 = \min\lbrace n \in\Z;\, c(n, \gamma) \neq 0\rbrace$. 
 On $\HU$, with \eqref{eq:mapztoXLYL}, we thus have normal convergence if
    $\hlf{z}{z}> \abs{n_0}\abs{\hlf{\ell'}{\ell}}^2$, as claimed.  
\end{proof}
\begin{example}
Let $L = H$ be a hyperbolic plane as in example \ref{ex:hyp_plane}, 
and let $\ell = 1 \in \OF$ and $\ell' = -\delta^{-1} \in \DF^{-1}$.
Then, the Siegel domain model $\HU$ is just the usual upper half-plane $\Hp$.
Since $L$ is unimodular, the discriminant group is trivial, so $\GammaU{L} \simeq \Ug(L)$. 
Note that $\SL_2(\Z)\simeq\SU(L)$ is contained in $\GammaU{L}$. Also,
the Weil representation $\rho_L$ restricts to the usual multiplier system of $\SL_2(\Z)$ on $\C$.

For every $m\in \Z$ with $m>0$, there is a unique element $J_m$ of $\Mweak_0(\SL_2(\Z))$, with a $q$-expansion of the form $J_m = q^{-m} + \mathbf{O}(q)$.

The Weyl chambers attached to $J_m$ are stripe-shaped regions of $\Hp$, defined by inequalities in $\Im\tau$ and the \lq topmost\rq\ Weyl chamber is the half-plane $W = \{\tau \in\Hp\,;\, 2\Im\tau > m\abs{\delta}\}$. 
The Borcherds product expansion of $\Xi(\tau, J_{m})$ attached to $W$ is given by
\[
\Xi\bigl(\tau, J_m\bigr) = e\bigl( - \sigma(m) \tau \bigr)
\prod_{\substack{k,l\in\Z\\\Wpos{W}{(k,l)}>0}}
 \left( 1 - e\left( k\tau - l\bar\zeta \right)\right)^{c(kl)}, 
\]
where $\sigma(m) = \sum_{d\mid m} d$ is the divisor sum of $m$. Here, the Weyl chamber condition $\Wpos{W}{(k,l)}>0$ translates to  $2 k \Im\tau  + l\abs{\delta}>0$ for every $\tau$ in $W$. 
The product is absolutely convergent for $\Im\tau > 2m\abs{\delta}^{-1}$.
For a more detailed treatment, see \cite{Ho11}, chapter 5, and \cite{Ho13}.
\end{example} 

\section{Values on the boundary}\label{sec:limit}
In this section, we study the values taken by a Borcherds product $\Xi_f$ on the boundary points of $\HU$. 
As usual, we only consider the cusp at infinity. Also, in the following, we assume that the width of the cusp, $\widthell$, is equal to $1$.

\begin{theorem}\label{thm:limBopro}
We use the notation of theorem \ref{thm:BPmain}. 
Let $W$ be a Weyl chamber, such that the cusp corresponding to $\ell$ is contained in the closure of $W$. 
If this cusp is neither a pole nor a zero of $\Xi_f$, the limit $\lim_{\tau\rightarrow i\infty}\Xi_f(\tau, \sigma)$ is given by
\[
\lim_{\tau\rightarrow i\infty}\Xi_f(\tau, \sigma) = C e\bigl(\overline{\rho_{f}(W)}_\ell\bigr)
\prod_{\substack{\lambda \in K' \\ \lambda = \kappa\zeta \ell \\ \kappa \in \Q_{>0}}} 
\left(1 - e\bigl(\kappa\bar\zeta\bigr)\right)^{c(0,\beta)},
\] 
where $\rho_{f}(W)_\ell$ denotes the $\ell$-component of the Weyl vector $\rho_f(W)$, and $\beta$  is the lattice vector in $L'$, unique modulo $L$, for which $p(\beta+ L) = \lambda + K$.
\end{theorem}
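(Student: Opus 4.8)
The plan is to obtain the limit directly from the Borcherds product expansion of part~\ref{item:MthmBp} of Theorem~\ref{thm:BPmain}, passing to $\tau\to i\infty$ factor by factor. This is legitimate because that product converges normally on $\{\hlf{z}{z}>4\abs{\hlf{\ell'}{\ell}}^2\abs{n_0}\}$, and for fixed $\sigma$ one has $\hlf{z}{z}=2\abs{\delta}\abs{\hlf{\ell'}{\ell}}^2\Im\tau+\hlf{\sigma}{\sigma}\to\infty$, so a whole neighbourhood of the cusp lies in the region of normal convergence and the limit of the product is the product of the limits of its factors.

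First I would record the behaviour of each argument. Decomposing $K\otimes_\Z\Q=\Q e_3\oplus\Q e_4\oplus(D\otimes_\Z\Q)$ and writing $\lambda=\lambda_3e_3+\lambda_4e_4+\lambda_D$, the relations $\blf{X}{\lambda}=\Re\frac{\hlf{z}{\lambda}}{\hlf{\ell'}{\ell}}$ and $\blf{Y}{\lambda}=\Im\frac{\hlf{z}{\lambda}}{\hlf{\ell'}{\ell}}$ from the proof of Theorem~\ref{thm:BPmain}, combined with \eqref{eq:Zoftausigma}, give
\[
\frac{\hlf{z}{\lambda}}{\hlf{\ell'}{\ell}}=-\lambda_3\bar\zeta+\lambda_4\tau+\frac{\hlf{\sigma}{\lambda_D}}{\hlf{\ell'}{\ell}},
\]
so the only $\tau$-dependence sits in the $e_4$-component, and $\blf{Y}{\lambda}$ grows like $\lambda_4\Im\tau$. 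Classifying the factors by $\lambda_4=\blf{e_3}{\lambda}$ (equivalently, by whether $\hlf{\ell}{\lambda}\neq0$): since the cusp lies in $\overline{W}$, the constraint $\Wpos{\lambda}{W}>0$ excludes $\lambda_4<0$ and, for $\lambda_4>0$, makes the argument's imaginary part tend to $+\infty$, so that factor tends to $1$ and the whole sub-product over $\lambda_4>0$ tends to $1$ by normal convergence. Only the factors with $\lambda_4=0$ survive. Among these the isotropic ones, $\lambda\in\Q\zeta\ell$ (hence $\lambda_D=0$), have argument limiting to $-\lambda_3\bar\zeta$ and reproduce exactly the isotropic product displayed in the theorem (the extra phase $2\Re[\,\cdot\,]$ being an integer for these $\lambda$, as in the proof of Corollary~\ref{cor:MThUnimod} and using $\widthell=1$); the remaining ones have $\lambda_D\neq0$ and $\Qf{\lambda}=\Qf{\lambda_D}<0$.

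For the prefactor I would write $\rho_f(W)=r_3e_3+r_4e_4+r_D$ and apply the same computation, giving $e\!\bigl(\tfrac{\hlf{z}{\rho_f(W)}}{\hlf{\ell'}{\ell}}\bigr)=e(-r_3\bar\zeta)\,e(r_4\tau)\,e\!\bigl(\tfrac{\hlf{\sigma}{r_D}}{\hlf{\ell'}{\ell}}\bigr)$. Since the surviving $\lambda_4=0$ factors are bounded, nonzero and $\tau$-independent, and the $\lambda_4>0$ part tends to $1$, the $\tau\to i\infty$ behaviour of $\Xi_f$ is controlled by $e(r_4\tau)$; the hypothesis that the cusp is neither a zero nor a pole thus forces $r_4=0$, and the $e_3$-part contributes $e(-r_3\bar\zeta)=e(\overline{\rho_f(W)}_\ell)$ because $\rho_f(W)_\ell=-r_3\zeta$.

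The main obstacle is to discard the two remaining a~priori $\sigma$-dependent contributions: the negative-norm factors ($\lambda_4=0$, $\lambda_D\neq0$) and the factor $e(\tfrac{\hlf{\sigma}{r_D}}{\hlf{\ell'}{\ell}})$ in the prefactor. For the former I would use the geometric meaning of the hypothesis: a factor with $\lambda_4=0$, $\lambda_D\neq0$ is indexed by a class whose prime Heegner divisor $\HeegU(\lambda)$ has $\hlf{\lambda}{\ell}=0$ and so runs through the cusp; by the divisor formula of part~\ref{item:MthmHd} a nonzero coefficient $c(\Qf{\lambda},\beta)$ would place a zero or a pole of $\Xi_f$ at the cusp (with $\widthell=1$ the correspondence $\beta\leftrightarrow\lambda$ is that of Remark~\ref{rmk:beta_from_k}), contrary to assumption; hence all these exponents vanish and the factors drop out. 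For the latter, the value at the cusp equals the constant Fourier--Jacobi coefficient $a_0$, which is independent of $\sigma$ (cf.\ the discussion following \eqref{eq:fj_exp} and \cite{FrCub}, lemma~6.1); once the negative-norm factors are gone, the only residual $\sigma$-dependence is $e(\tfrac{\hlf{\sigma}{r_D}}{\hlf{\ell'}{\ell}})$, and the non-degeneracy of $\hlfempty$ on $D\otimes_\Z\C$ forces $r_D=0$. Assembling $C\,e(\overline{\rho_f(W)}_\ell)$ with the isotropic product then yields the claimed formula.
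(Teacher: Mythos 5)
Your proposal is correct in substance and arrives at the paper's formula, and roughly half of it coincides with the paper's own route: the same decomposition $\lambda=\lambda_3e_3+\lambda_4e_4+\lambda_D$, the same elimination of $\lambda_4<0$ via the Weyl chamber condition on points $(it,\sigma)$ near the cusp, the same deduction $r_4=0$ from the hypothesis, and the same integrality of the phase $2\Re\bigl[\bar\zeta\hlf{\beta}{\ell'}/(\bar\delta\hlf{\ell'}{\ell})\bigr]$ via $\widthell=1$ and remark \ref{rmk:beta_from_k}. Where you genuinely diverge is in disposing of the $\tau$-independent but $\sigma$-dependent data. The paper expands every surviving factor as a binomial series, collects the resulting series by $\tilde\lambda=\sum_i n_i\lambda_i$, and compares with the constant zeroth Fourier--Jacobi coefficient $a_0$; this one coefficient comparison forces $\rho_D=0$ and $\tilde\lambda_D=0$ (hence kills all factors with $\lambda_D\neq0$) simultaneously. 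You instead split the task: a geometric divisor argument for the factors with $\lambda_4=0$, $\lambda_D\neq0$ (whose prime Heegner divisors are $\tau$-independent ``vertical cylinders'' through the cusp), and the constancy of $a_0$ only for the character $\sigma\mapsto e\bigl(\hlf{\sigma}{r_D}/\hlf{\ell'}{\ell}\bigr)$, where nondegeneracy gives $r_D=0$. Your version is more geometric, and you make explicit the normal-convergence justification for passing to the limit factor by factor, which the paper leaves implicit.

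The one step that needs shoring up is the sentence ``a nonzero coefficient $c(\Qf{\lambda},\beta)$ would place a zero or a pole of $\Xi_f$ at the cusp.'' This is not immediate from part \ref{item:MthmHd}: distinct pairs $(n,\beta)$ can contribute to the \emph{same} prime divisor (proportional vectors $\lambda, 2\lambda,\dots$ cut out the same hyperplane, and the vanishing locus of a single factor $1-e\bigl(\hlf{z}{\lambda}/\hlf{\ell'}{\ell}+\dots\bigr)$ splits into several prime Heegner divisors), so a priori coefficients of mixed signs could cancel and leave every prime divisor through the cusp with net multiplicity zero even though some individual $c(\Qf{\lambda},\beta)\neq 0$. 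The claim is nevertheless true and can be patched: fix a primitive direction $\lambda_0$ (with $\lambda_{0,4}=0$, $\lambda_{0,D}\neq 0$) and consider the finite product $\prod_{k\geq1}\bigl(1-q^k\bigr)^{c_k}$ in $q=e\bigl(\bar\lambda_{0,\ell}+\hlf{\sigma}{\lambda_{0,D}}/\hlf{\ell'}{\ell}\bigr)$; at a primitive $k_1$-th root of unity, where $k_1$ is the largest index with $c_{k_1}\neq0$, the order is exactly $c_{k_1}\neq 0$, so some prime Heegner divisor through the cusp carries nonzero multiplicity, and then for $\sigma_0$ on that cylinder the limit of $\Xi_f(it,\sigma_0)$ is $0$ or $\infty$, contradicting the hypothesis as the paper uses it (limit finite and nonzero for each $\sigma$). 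Alternatively, note that you already hold the stronger tool: since $a_0$ is independent of $\sigma$, the entire $\tau$-independent part of the product must be constant in $\sigma$, which forces all these exponents to vanish with no multiplicity bookkeeping at all --- this is precisely what the paper's coefficient comparison accomplishes.
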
 

\begin{proof}
We denote the $\ell$- and $\ell'$-component of $\rho_f$ by $\rho_\ell$ and $\rho_{\ell'}$, respectively, and the definite part by $\rho_D \in D\otimes_{\OF}\C$.
Since $\rho_f \in K\otimes_\Z\Q$, we have $\rho_\ell \ell = \rho_3 e_3$, $\rho_{\ell'}\ell' = \rho_4 e_4$, where $\rho_3$ and $\rho_4$ denote the $e_3$- and $e_4$-components.
Similarly for a lattice vector $\lambda \in K'$, we write 
\[
\lambda = \lambda_\ell\ell +  \lambda_{\ell'}\ell' + \lambda_D = \lambda_3 e_3 + \lambda_4 e_4 + \lambda_D.
\]
Thus, with \eqref{eq:basis_oddevenD}, we have
\[
\begin{aligned}
\hlf{z}{\rho_f} & =  \bar\rho_\ell\hlf{\ell'}{\ell} - \tau\delta\lvert\hlf{\ell'}{\ell} \rvert^2 \bar\rho_{\ell'} + \hlf{\sigma}{\rho_D} \\
& = -\bar\zeta  \rho_3 \hlf{\ell'}{\ell} + \tau\rho_4 \hlf{\ell'}{\ell} + \hlf{\sigma}{\rho_D}.
\end{aligned}
\]
The Weyl vector term takes the form 
\[
e\left( \frac{\hlf{z}{\rho_f}}{\hlf{\ell'}{\ell}} \right) = \exp\left( 2\pi i \left[ - \bar\zeta \rho_3 + \tau \rho_4  + \frac{\hlf{\sigma}{\rho_D}}{\hlf{\ell'}{\ell}}  \right] \right). 
\]
Hence, $\Xi_f(\tau,\sigma)$ has a pole at infinity if $\rho_4<0$ and a zero if $\rho_4 > 0$. 

From now on, \emph{we assume $\lim_{\tau \rightarrow i\infty} \Xi_f(\tau,\sigma) \neq 0$ and $\neq \infty$}. In particular, $\rho_4 = 0$.  

Next, we claim that $\lambda_4$ is non-negative. To see this, consider the Weyl chamber condition $\Wpos{W}{\lambda}>0$ for $z$ with $\tau = it$, $t\gg 0$ and $\sigma$ of fixed norm. 
We have:
\[
\hlf{z}{\lambda} = - \bar\zeta \lambda_3 \hlf{\ell'}{\ell} +it \lambda_4\hlf{\ell'}{\ell} + \hlf{\sigma}{\lambda_D}.
\] 
Recall the definition of the Weyl chamber condition in \eqref{eq:defWeylCond}, clearly, if $t$ is large, $\Wpos{W}{\lambda}>0$ is satisfied only if $\lambda_4 \geq 0$. 
Since in the limit the corresponding factor in the product is trivial for $\lambda_4 >0$, we can restrict to $\lambda$ with $\lambda_4 = \lambda_{\ell'} = 0$. 

Since we assume $\widthell = 1$, by remark \ref{rmk:beta_from_k}, for each $\lambda$, there is modulo $L$ exactly one $\beta \in L'$ with $p(\beta) = \lambda + K$, given by $\beta = \lambda - \blf{\lambda}{f}\ell$, where $f$ is a vector in $L$ satisfying $\blf{\ell}{f} = 1$. Thus, the $\ell$-component of $\beta$ is given by $\lambda_\ell - \blf{\lambda}{f} = -\bar\zeta\lambda_3 - \blf{\lambda}{f}$. Note that $\blf{\lambda}{f}\in\Z$ since $f\in L$ and $\lambda \in L'$. Hence,
$2\Re\left( \bar{\delta}^{-1}\bar{\zeta} \beta_\ell \right)$ is an integer.

Up to here, we see that, in a suitable neighborhood of infinity, $\Xi_f$ can be written in the form
\begin{equation}\label{eq:BoPnearinfty}
\Xi_f(\tau,\sigma) = Ce\left(\bar \rho_{\ell} + \frac{\hlf{\sigma}{\rho_D}}{\hlf{\ell'}{\ell}}\right)
\prod_{\substack{\lambda \in K' \\ \lambda_{\ell'} = 0 \\ \Wpos{W}{\lambda}>0  }}
\left(1 - e\left( \bar{\lambda}_\ell + \frac{\hlf{\sigma}{\lambda_D}}{\hlf{\ell'}{\ell}}
 \right)\right)^{c(\hlf{\lambda_D}{\lambda_D}, \beta)}.
\end{equation}
As an automorphic form, $\Xi_f(\tau,\sigma)$ has a Fourier-Jacobi expansion of the form  
\begin{equation}\label{eq:jf_bopro}
\Xi_f(\tau,\sigma) = \sum_{n \geq 0} a_n(\sigma) e\left( \frac{n}{N} \tau \right).
\end{equation}
Note that by the non-vanishing assumption, 
$n \in \Z$ and by regularity, $n\geq0$. We have
\begin{equation} \label{eq:a0_isit}
a_0 =  
 \lim_{\tau\rightarrow i\infty} \Xi_f(\tau,\sigma).
\end{equation}
Beside the Fourier-Jacobi expansion \eqref{eq:jf_bopro}, the Borcherds product can also be rewritten as a series by expanding each factor as a binomial series and taking the resulting product. 
Thus, with the binomial series expansion, the right hand side of \eqref{eq:BoPnearinfty} becomes
\[
 C e\left(\bar \rho_{\ell} + \frac{\hlf{\sigma}{\rho_D}}{\hlf{\ell'}{\ell}}\right)\,\cdot\!\!
\prod_{\substack{\lambda \in K' \\ \lambda_{\ell'} = 0 \\ \Wpos{W}{\lambda}>0  }}
\sum_{n\geq 0} (-1)^n \binom{c(\hlf{\lambda_D}{\lambda_D},\beta)}{n } 
e\left( \bar{\lambda}_\ell + \frac{\hlf{\sigma}{\rho_D}}{\hlf{\ell'}{\ell}} \right)^n.
\] 
By multiplying all remaining factors, for the infinite product part we get
\[
1 + 
\sum_{k>0}\sum_{\substack{\lambda_1, \dotsc, \lambda_k \in K' \\ \Wpos{W}{\lambda_i}>0 \\ \lambda_{i,\ell'} = 0} }
\sum_{\substack{n_1, \dotsc, n_k \in \Z \\ n_i\geq 0}}
b\bigl( (\lambda_i, n_i)_{i=1,\dotsc, k} \bigr) 
e\left(\sum_{i=1}^k n_i \frac{\hlf{z}{\lambda_i}}{\hlf{\ell'}{\ell}} \right),
\]
with coefficients $b\bigl( (\lambda_i, n_i)_{i=1,\dotsc, k} \bigr)$ indexed by tuples of $k$ lattice vectors $\lambda_i$ 
and $k$ integers $n_i$. 

We set  $\tilde\lambda \vcentcolon= \sum_{i=1}^k n_i\lambda_i$. 
 By $\Z$-linearity,  $\tilde\lambda \in K'$ 
with $\tilde\lambda_{\ell'} =0$. 
Further, since the $\lambda_i$ satisfy the Weyl chamber condition $\Wpos{\lambda}{W}>0$ and  the $n_i$ are non-negative, each $\tilde\lambda$ satisfies $\Wposd{\tilde\lambda, W}\geq 0$. 
We gather all coefficients belonging to $\tilde\lambda$ and denote their sum as $B\bigl(\tilde\lambda\bigr)$. 
Now, comparing coefficients with \eqref{eq:a0_isit} gives
\begin{equation*} 
a_0 = 
C \biggl[ e\left(\bar\rho_\ell + \frac{\hlf{\sigma}{\rho_D}}{\hlf{\ell'}{\ell}} \right) 
+ e\bigl(\bar\rho_\ell \bigr) \!\sum_{\substack{\tilde\lambda \in K' 
\\  \Wposd{W ,\tilde\lambda} \geq 0}}
B\bigl(\tilde\lambda\bigr) e\left(\overline{\tilde\lambda_\ell} + \frac{\hlf{\sigma}{\lambda_D + \rho_D}}{\hlf{\ell'}{\ell}} \right) \biggr].
\end{equation*}
As the left hand side is constant, it follows that $\rho_D = 0$ and further that $\tilde\lambda_D = 0$ for all $\tilde\lambda$.
Whence $\lambda_D = 0$  for all those $\lambda$, which contribute non-trivial factors to the Borcherds product \eqref{eq:BoPnearinfty}.
Thus, re-inserting into the right-hand side of \eqref{eq:BoPnearinfty} we get
\[
 \lim_{\tau\rightarrow i\infty} \Xi_f(\tau,\sigma) = C e(\bar\rho_{\ell}) 
\prod_{\substack{\lambda = \lambda_\ell \ell \in K' \\ \Wposd{W, \lambda}>0} }
\left(1 - e(\bar\lambda_\ell)\right)^{c(0,\beta)}.
\]
 Since $\lambda  = \lambda_\ell \ell$ is contained in $K'$, it follows that $\lambda = \kappa e_3 = - \zeta\kappa \ell$, with a rational coefficient $\kappa$.
By the Weyl chamber condition,  $ 0 < \Im( - \bar\zeta\kappa ) =  \frac12 \abs{\delta} \kappa$. Thus, $\kappa >0$, as claimed.
\end{proof}
\begin{rmk*} 
A different approach for proving theorem \ref{thm:limBopro} is to consider the Borcherds lift $\Psi_L(Z,f)$ on a fixed one-dimensional boundary component.
 Evaluating this at the image $\alpha(\infty)$ of the point at infinity, see \eqref{eq:limit_alpha}, 
we can, due to lemma \ref{lemma:limit_embed},  obtain the value of $\lim_{\tau\rightarrow i\infty} \Xi_f(\tau,\sigma)$.
 This approach is used in the author's thesis \cite{Ho11}, chapter 4.3.
\end{rmk*}

\section{Modularity of Heegner divisors} \label{sec:heeg2}

In this section, we give an analogue of Borcherds' result on the modularity of Heegner divisors in the case of modular surfaces for orthogonal groups from \cite{Bo99}. A result similar to our theorem \ref{thm:AseriesMod} below, has been obtained independently by Liu in \cite{Liu}, using rather different methods.

Let $\C[L'/L][q^{-1}]$ be the space of Fourier polynomials (including constant terms) and $\C[L'/L][[q]]$ the space of formal power series. The residue-pairing is a non-degenerate bilinear pairing between these two spaces, which can be defined by putting
\[
\Res{f}{g} = \sum_{\substack{n\leq 0 \\ \beta \in L'/L}} c(n,\beta)b(-n,\beta),
\] 
for $f = \sum_{\beta, n\leq 0} c(n,\beta) q^n \in \C[L'/L][q^{-1}]$ and 
$g = \sum_{\beta, m\geq 0} b(m,\beta) q^m \in \C[L'/L][[q]]$. 
The space $\Mweak_{1-m,\rho_L}$ can be identified with a subspace of $ \C[L'/L][q^{-1}]$ by mapping a weakly holomorphic modular form to the non-positive part of its Fourier expansion.
 Likewise, the space $\mathcal{M}_{1+m, \rho_L^*}$ can be identified with a subspace of $\C[L'/L][[q]]$ by mapping a holomorphic modular form to its Fourier expansion.

Using Serre duality for vector-bundles on Riemann surfaces, in \cite{Bo99},   
Borcherds showed that the space  $\Mweak_{1-m,\rho_L}$ is the orthogonal complement of $\mathcal{M}_{1+m,\rho_L^*}$ with respect to the residue-paring $\Res{\,}{\,}$.
 Since the pairing is non-degenerate and $\mathcal{M}_{1+m,\rho_L^*}$ has finite dimension, $\mathcal{M}_{1+m,\rho_L^*}$ is also the orthogonal complement of $\mathcal{M}_{1-m,\rho_L}$. 
 In particular, the following holds (cf.\ \cite{Bo99}, theorem 3.1, or \cite{Br02} theorem 1.17).
\begin{lemma}\label{lem:exDual}
A formal power series $\sum_{\beta}\sum_{n>0}b(n,\beta)q^n \ebase_\beta \in \C[L'/L]\otimes\C[[q]]$ is the Fourier expansion of a modular form $g\in\mathcal{M}_{1+m,\rho_L^*}$ if and only if
\[
\sum_{\beta \in L'/L}\sum_{\substack{n\in \Z + \QfNop(\beta) \\ n\leq 0}} c(n,\beta) b(-n,\beta) = 0
\]
for every $f = \sum_{n, \beta} c(n,\beta)q^n \ebase_\beta \in \Mweak_{1-m, \rho_L}$.
\end{lemma}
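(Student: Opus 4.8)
The plan is to read Lemma~\ref{lem:exDual} as the explicit, coefficient-level translation of the orthogonality relation between $\Mweak_{1-m,\rho_L}$ and $\mathcal{M}_{1+m,\rho_L^*}$ recorded in the paragraph above, and to verify the two implications separately. Throughout I would use the natural bilinear pairing on $\C[L'/L]$, given by $\langle \ebase_\beta, \ebase_\gamma\rangle = \delta_{\beta\gamma}$, with respect to which $\rho_L$ and $\rho_L^*$ are dual; this is exactly the pairing underlying the residue-pairing $\Res{\,}{\,}$. Its only role is to guarantee that for $f$ transforming under $\rho_L$ and $g$ under $\rho_L^*$ the scalar function $\tau \mapsto \langle f(\tau), g(\tau)\rangle$ is invariant under the (scalar) slash action of $\SL_2(\Z)$.

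For the forward (\emph{only if}) direction I would take $g = \sum_{\beta}\sum_{n>0} b(n,\beta)q^n\ebase_\beta \in \mathcal{M}_{1+m,\rho_L^*}$, so that $b(m,\beta)=0$ for $m<0$, and an arbitrary $f = \sum_{n,\beta} c(n,\beta)q^n \ebase_\beta \in \Mweak_{1-m,\rho_L}$. Then $h(\tau) = \langle f(\tau), g(\tau)\rangle$ is a scalar weakly holomorphic modular form of weight $(1-m)+(1+m) = 2$ for $\SL_2(\Z)$ with trivial multiplier, holomorphic on $\Hp$ and meromorphic at the cusp. Hence $h(\tau)\,d\tau$ descends to a meromorphic differential on the compactified modular curve $X(1) = \overline{\SL_2(\Z)\backslash\Hp}$; since $h$ is holomorphic on $\Hp$, invariance forces $h(\tau)\,d\tau$ to extend holomorphically across the elliptic points, so its only possible pole is at the cusp. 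The residue theorem on the compact Riemann surface $X(1)$ then forces the constant Fourier coefficient of $h$ to vanish. Multiplying the two $q$-expansions shows that this constant coefficient equals $\sum_{\beta}\sum_{j}c(j,\beta)b(-j,\beta)$, and because $g$ is holomorphic only the terms with $j\leq 0$ survive, yielding precisely $\Res{f}{g}=0$.

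For the reverse (\emph{if}) direction I would take $g_0 = \sum_{\beta}\sum_{n>0} b(n,\beta)q^n\ebase_\beta \in \C[L'/L]\otimes\C[[q]]$ satisfying $\Res{f}{g_0} = 0$ for every $f \in \Mweak_{1-m,\rho_L}$. Under the identifications described above, this says exactly that $g_0$ lies in the orthogonal complement of $\Mweak_{1-m,\rho_L}$ with respect to the residue-pairing. By the orthogonality statement already established, namely that $\mathcal{M}_{1+m,\rho_L^*}$ is the \emph{full} orthogonal complement of $\Mweak_{1-m,\rho_L}$ (a consequence of Serre duality together with the non-degeneracy of $\Res{\,}{\,}$ and the finite-dimensionality of $\mathcal{M}_{1+m,\rho_L^*}$), the series $g_0$ must be the Fourier expansion of a form in $\mathcal{M}_{1+m,\rho_L^*}$.

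The entire substance sits in this last step, and it is the part I expect to be the main obstacle if one insists on arguing from scratch rather than invoking \cite{Bo99}, theorem~3.1. The forward inclusion is just the residue computation above and is essentially formal; the reverse inclusion is a genuine \emph{existence} statement, requiring one to produce a holomorphic vector-valued modular form realizing a prescribed admissible family of coefficients. This is exactly where Serre duality for the relevant line bundle on $X(1)$ is indispensable: it identifies the cokernel of the coefficient-realization map with the dual of $\mathcal{M}_{1+m,\rho_L^*}$, so that orthogonality to all obstructions becomes equivalent to solvability. I would therefore cite \cite{Bo99}, theorem~3.1 (equivalently \cite{Br02}, theorem~1.17) for this direction rather than reprove it, and treat the lemma as the promised reformulation of that duality.
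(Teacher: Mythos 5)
Your proposal matches the paper's treatment: the paper gives no independent proof, presenting the lemma as a direct reformulation of Borcherds' Serre-duality result (\cite{Bo99}, theorem 3.1; equivalently \cite{Br02}, theorem 1.17), namely that $\Mweak_{1-m,\rho_L}$ and $\mathcal{M}_{1+m,\rho_L^*}$ are mutual orthogonal complements under the residue-pairing, with non-degeneracy and finite-dimensionality giving the reverse complement. Your added residue-theorem computation for the \emph{only if} direction (including the check at elliptic points) is correct but only makes explicit the easy half of the cited duality, so the approach is essentially the same.
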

By a result of McGraw, \cite{McGraw} theorem 5.6, the spaces $\Mweak_{1-m, \rho_L}$ and $\mathcal{M}_{1+m, \rho_L^*}$ have bases of modular forms with integer coefficients. 
Thus, a statement analogous to lemma \ref{lem:exDual} holds for power series and modular forms over $\Q$. Moreover, it suffices to check the vanishing condition for every $f$ with integral Fourier coefficients. 

Consider $\Chow(\UXGamma)$, 
the first Chow group of $\UXGamma$. Recall that $\Chow(\UXGamma)$ is isomorphic to the Picard group $\operatorname{Pic}(\UXGamma)$. 

Let $\pi: \tilde\UXGamma \rightarrow \UXGamma$ be a desingularization and denote by $\BDiv = \BDiv(\tilde\UXGamma)$ the group of boundary divisors of $\tilde\UXGamma$. We now consider a modified Chow group, the quotient $\Chow(\tilde\UXGamma)/ \BDiv$. Put $(\Chow(\tilde\UXGamma)/ \BDiv)_\Q = (\Chow(\tilde\UXGamma)/ \BDiv) \otimes_\Z \Q$.

Denote by $\Abdl{k}$ the sheaf of meromorphic automorphic forms on $\UXGamma$. 
By the theory of Baily-Borel, there is a positive integer $n(\Gamma)$, such that if $k$ is a positive integer divisible by $n(\Gamma)$, the sheaf $\Abdl{k}$ is an algebraic line bundle and thus defines an element in $\operatorname{Pic}(\UXGamma)$.
The pullback of $\Abdl{k}$ to $\tilde\UXGamma$ defines a class in $\Chow(\tilde\UXGamma)/\BDiv$, which we denote $c_1(\Abdl{k})$. 
More generally, if $k$ is rational, we choose an integer $n$ such that $nk$ is a positive integer divisible by $n(\Gamma)$ and put
$c_1(\Abdl{k}) = \frac{1}{n}c_1(\Abdl{nk}) \in (\Chow(\tilde\UXGamma)/\BDiv)_\Q$.

As the Heegner divisors are $\Q$-Cartier on $\UXGamma$, their pullbacks define elements in the modified Chow group $(\Chow(\tilde\UXGamma)/\BDiv)_\Q$.
\begin{theorem}\label{thm:AseriesMod} 
The generating series in $\Q[L'/L][[q]]\otimes (\Chow(\tilde\UXGamma)/\BDiv)_\Q$,
\[
A(\tau) = c_1(\Abdl{-1/2}) + \sum_{\beta \in L'/L} \sum_{\substack{n \in \Z + \QfNop(\beta) \\ n>0}} \pi^*\bigl(\HeegU(-n,\beta)\bigr)\, q^n \ebase_\beta 
\]
is a modular form in $\mathcal{M}_{1+m, \rho_L^*}$ with values in $(\Chow(\tilde\UXGamma)/\BDiv)_\Q$, i.e.\ $A(\tau)$ is contained in $\mathcal{M}_{1+m, \rho_L^*}\otimes (\Chow(\tilde\UXGamma)/\BDiv)_\Q$.
\end{theorem}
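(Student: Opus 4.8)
The plan is to deduce the modularity of $A(\tau)$ from the duality criterion in Lemma \ref{lem:exDual}, using the Borcherds lift of theorem \ref{thm:BPmain} to supply the one nontrivial relation. Since $(\Chow(\tilde\UXGamma)/\BDiv)_\Q$ is a $\Q$-vector space, a power series valued in it lies in $\mathcal{M}_{1+m,\rho_L^*}\otimes(\Chow(\tilde\UXGamma)/\BDiv)_\Q$ if and only if its image under every $\Q$-linear functional $\varphi$ is an ordinary scalar form in $\mathcal{M}_{1+m,\rho_L^*}$. First I would fix such a $\varphi$ and apply it coefficient-wise to $A(\tau)$, reducing the claim to the scalar version of Lemma \ref{lem:exDual} (valid over $\Q$, as noted after that lemma). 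Writing $b(m,\beta)$ for the $q^m\ebase_\beta$-coefficient of $A(\tau)$, so that $b(0,0)=c_1(\Abdl{-1/2})$ and $b(m,\beta)=\pi^*\bigl(\HeegU(-m,\beta)\bigr)$ for $m>0$, the scalar criterion becomes: for every $f=\sum c(n,\beta)q^n\ebase_\beta\in\Mweak_{1-m,\rho_L}$,
\[
\sum_{\beta\in L'/L}\sum_{\substack{n\in\Z+\QfNop(\beta)\\ n\le 0}} c(n,\beta)\,\varphi\bigl(b(-n,\beta)\bigr)=0.
\]

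By McGraw's result the spaces in question have bases with integral Fourier coefficients, so it suffices to verify this for $f$ with integer principal part. For such $f$ the hypotheses of theorem \ref{thm:BPmain} are met, and the Borcherds lift $\Xi_f$ exists. Separating the $n=0$ term (which contributes $c(0,0)\,\varphi\bigl(c_1(\Abdl{-1/2})\bigr)$, only $\beta=0$ surviving since the constant term of $A$ is supported on $\ebase_0$) from the terms with $n<0$ (which contribute $\sum_{\beta}\sum_{n<0}c(n,\beta)\,\varphi\bigl(\pi^*\HeegU(n,\beta)\bigr)$), the criterion is precisely the vanishing of a linear combination in the modified Chow group. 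The geometric input realizing this combination is that $\Xi_f$ is, by parts \ref{item:MthmMf} and \ref{item:MthmHd} of theorem \ref{thm:BPmain}, a meromorphic automorphic form of weight $c(0,0)/2$ whose divisor on $\HU$ is $\frac12\sum_\beta\sum_{n<0}c(n,\beta)\HeegU(n,\beta)$.

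Viewed on $\UXGamma$ and pulled back to the desingularization, $\Xi_f$ is a nonzero rational section of a $\Q$-power of the automorphy line bundle $\Abdl{c(0,0)/2}$, so its divisor class represents $c_1(\Abdl{c(0,0)/2})$ up to divisors supported on the boundary and the exceptional locus; passing to $\Chow(\tilde\UXGamma)/\BDiv$ annihilates exactly these, and I would obtain
\[
c_1\bigl(\Abdl{c(0,0)/2}\bigr)=\frac12\sum_{\beta\in L'/L}\sum_{\substack{n\in\Z+\QfNop(\beta)\\ n<0}} c(n,\beta)\,\pi^*\bigl(\HeegU(n,\beta)\bigr)
\]
in $(\Chow(\tilde\UXGamma)/\BDiv)_\Q$. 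Combining this with the additivity $c_1(\Abdl{c(0,0)/2})=\tfrac{c(0,0)}{2}c_1(\Abdl{1})=-c(0,0)\,c_1(\Abdl{-1/2})$ and applying $\varphi$ yields the desired vanishing for every integral $f$ and every $\varphi$, which by the reduction above proves that $A(\tau)\in\mathcal{M}_{1+m,\rho_L^*}\otimes(\Chow(\tilde\UXGamma)/\BDiv)_\Q$.

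The step I expect to be the main obstacle is the identification $c_1(\Abdl{c(0,0)/2})=[\Div(\Xi_f)]$ in the \emph{modified} Chow group: one must check that passing from $\UXGamma$ to a desingularization and forming rational sections changes the divisor class only by elements of $\BDiv$, so that the indeterminacy of $\Xi_f$ along the boundary and along the exceptional locus is harmless, and that the $\Q$-linear extension of $c_1$ (forced because $\Abdl{k}$ is a genuine line bundle only for $k$ divisible by $n(\Gamma)$) is well defined. A second, purely bookkeeping, point that must be handled with care is reconciling the factor $\frac12$ and the multiplicities recorded in part \ref{item:MthmHd} of theorem \ref{thm:BPmain} with the symmetries $c(n,\beta)=c(n,-\beta)$ and $\HeegU(n,\beta)=\HeegU(n,u\beta)$ for $u\in\OF^\times$, so that, after summing over all of $L'/L$ in the residue pairing, the coefficient of each $\pi^*\HeegU(-n,\beta)$ matches the class computed from $\Div(\Xi_f)$ exactly.
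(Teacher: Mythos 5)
Your proposal follows the paper's proof essentially verbatim: the paper likewise reduces via Lemma \ref{lem:exDual} (together with McGraw's integrality result, so that only $f$ with integral principal part need be tested) to the vanishing of $c(0,0)\,c_1(\Abdl{-1/2}) + \sum_{\beta}\sum_{n<0} c(n,\beta)\,\pi^*\HeegU(n,\beta)$ in $(\Chow(\tilde\UXGamma)/\BDiv)_\Q$, and obtains this by viewing the Borcherds lift $\Xi_f$ of Theorem \ref{thm:BPmain} as, up to torsion, a rational section of $\Abdl{c(0,0)/2}$ with divisor $\frac12\sum c(n,\beta)\HeegU(n,\beta)$. The differences are purely presentational: you make explicit the reduction via $\Q$-linear functionals on the Chow group, the additivity $c_1(\Abdl{c(0,0)/2}) = -c(0,0)\,c_1(\Abdl{-1/2})$, and the factor-$\tfrac12$/multiplicity bookkeeping between $\Div(\Xi_f)$ and the residue pairing over all of $L'/L$ --- steps the paper's one-line deduction passes over silently.
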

\begin{proof}
This follows from theorem \ref{thm:BPmain} and lemma \ref{lem:exDual}. 
Indeed, by lemma \ref{lem:exDual} it suffices  to show that 
\[
c(0,0)c_1(\Abdl{-1/2}) + \sum_{\beta \in L'/L} \sum_{\substack{n \in \Z + \QfNop(\beta) \\ n<0}} c(n,\beta)\, \pi^*\HeegU(n,\beta) = 0 \in (\Chow(\tilde\UXGamma)/\BDiv)_\Q,
\]
for every $f = \sum_{n,\beta} c(n,\beta)q^n \ebase_\beta$ in $\Mweak_{1-m,\rho_L}$ with integral Fourier coefficients. 
But this follows immediately from theorem \ref{thm:BPmain}, as the Borcherds lift $\Xi_f$ of $f$ is an automorphic form with divisor $\frac12 \sum_{n,\beta} c(n,\beta)\HeegU(n,\beta)$ of weight $c(0,0)/2$, i.e.\ up to torsion a rational section of $\Abdl{c(0,0)/2}$.
\end{proof}

\noindent\acknowledegements Much of the present paper is a short version of the author's thesis \cite{Ho11} completed at the TU Darmstadt.
 I am indebted to my thesis adviser, {J.\ Bruinier} for his insight and many helpful suggestions. 
I would also like to thank the anonymous referee, whose comments have helped to considerably improve this paper.

\bibliographystyle{hplain}
\bibliography{bouni2.bib}

\end{document}